\theoremstyle{plain}
\newtheorem{thm}{Theorem}[section]
\newtheorem*{result*}{Main result}
\newtheorem{lemma}[thm]{Lemma}
\newtheorem{proposition}[thm]{Proposition}
\theoremstyle{definition}
\newtheorem{definition}[thm]{Definition}
\theoremstyle{remark}
\newtheorem{rem}[thm]{Remark}
\numberwithin{equation}{section}
\numberwithin{footnote}{section}
\newcommand {\R} {\mathbb R}
\newcommand{\Eqref}[1]{Eq.~\eqref{#1}}
\newcommand{\Sectionref}[1]{Section~\ref{#1}}  
\newcommand{\Defref}[1]{Definition~\ref{#1}}
\newcommand{\Lemref}[1]{Lemma~\ref{#1}}
\newcommand{\Propref}[1]{Proposition~\ref{#1}}
\newcommand{\Theoremref}[1]{Theorem~\ref{#1}}
\newcommand{\pbold}{{\boldsymbol p}}
\newcommand{\lambdabold}{{\boldsymbol\lambda}}
\newcommand{\matchtime}{{t_*}}
\author[F. Beyer]{Florian Beyer}
\address{Dept of Mathematics and Statistics\\
730 Cumberland St\\
University of Otago, Dunedin 9016\\ New Zealand}
\email{fbeyer@maths.otago.ac.nz }
\author[T.A. Oliynyk]{Todd A. Oliynyk}
\address{School of Mathematical Sciences\\
9 Rainforest Walk\\
Monash University, VIC 3800\\ Australia}
\email{todd.oliynyk@monash.edu}
\title{Relativistic 
perfect fluids near Kasner singularities}
\begin{document}

\begin{abstract}
  \noindent 
  We establish the existence of a stable family of solutions to the Euler equations on Kasner backgrounds near the singularity with the full expected asymptotic data degrees of freedom and no symmetry or isotropy restrictions. Existence is achieved through transforming the Euler equations into the form of a symmetric hyperbolic Fuchsian system followed by an application of a new existence theory for the singular initial value problem. Stability is shown to follow from the existence theory for the (regular) global initial value problem for Fuchsian systems that was developed in \cite{beyer2019a}. In fact, for each solution in the family, we prove the existence of an open set of nearby solutions with the same qualitative asymptotics and show that any such perturbed solution agrees again with another solution of the singular initial value problem. All our results hold in the regime where the speed of sound of the fluid is large in comparison to all Kasner exponents. This  is interpreted as the regime of \emph{stable} fluid asymptotics near Kasner big bang singularities. 
\end{abstract}

\maketitle

\section{Introduction}
\label{sec:intro}

\subsubsection*{Perfect fluids on Kasner backgrounds} The main result of this paper concerns the dynamics of solutions of the compressible relativistic Euler equations in the vicinity of the in general highly anisotropic singularity of Kasner spacetimes.
Relativistic compressible perfect fluids with linear equations of state\footnote{In this paper, we choose physical units such that $c=1$ for the speed of light and $G=1/(8\pi)$ for Newton's gravity constant.} 
\begin{equation}
  \label{eq:EulerEOS2}
  P=c_s^2\rho,
\end{equation}
where $P$ is the fluid {pressure}, $\rho$ is the fluid {density} and
$c_s$ the \textit{speed of sound},
are common matter models in cosmology. In the standard model of cosmology \cite{mukhanov2005}, large portions of the history of the universe are taken to be dominated by perfect fluids with $c_s=0$ (\emph{dust}) while the early hot universe, which we are mostly concerned with in this paper here, is dominated by \emph{radiation fluids} given by $c_s^2=1/3$.

One of the main goals in mathematical cosmology is to characterise the big bang asymptotics of general solutions to the coupled Einstein-matter equations, a formidable task. However, the \emph{matter does not matter} hypothesis, part of the famous \emph{BKL conjecture} \cite{barrow1978,belinski1973,belinskii1970,eardley1972,lifshitz1963,wainwright1997},
suggests that the study of the matter equations defined on some relevant class of \emph{fixed} singular cosmological model spacetimes is a useful and significant first step for this long-term  endeavour. This hypothesis states that the big bang asymptotics of the {geometry} for solutions of the Einstein-matter equations is  governed by the  Einstein-vacuum equations, and therefore, the evolution of the geometry effectively decouples from the dynamics of the matter. We take this as a justification to restrict our attention to the perfect fluid equations on \emph{fixed} background spacetimes in this paper as a first step towards the long-term objective of understanding the dynamics of the coupled Einstein-Euler equations near a big-bang singularity.
It is believed that the \emph{matter does not matter} hypothesis is valid only if the matter is not ``too extreme''. In the case of \eqref{eq:EulerEOS2} it is expected that the condition $c_s^2<1$ is sufficient while the dynamics of the ``extreme'' case $c_s^2=1$, the stiff fluid (or scalar field) case, is expected to be significantly different \cite{ames2019,andersson2001,rodnianski2014,rodnianski2018,barrow1978}.  Interestingly, our results do continue to apply to fluids on Kasner-scalar field background spacetimes as we explain below.

The family of \emph{Kasner spacetimes} holds a prominent place in the mathematical cosmology literature as one of the most important singular cosmological models. This is particularly true for the so-called \emph{Kasner-scalar field spacetimes} -- solutions of the Einstein-scalar field equations -- which are expected to be limit points of the coupled Einstein-matter equations and, most importantly, to which the results of this article can be easily applied, see Remark~\ref{rem:KSF}. In the presentation here, we, however, focus on the \emph{Kasner-vacuum spacetimes}, often referred to as simply \emph{Kasner spacetimes}.
A {Kasner-(vacuum) spacetime} is a spatially homogeneous solution $(M,g)$ of Einstein's vacuum equations
for $M=(0,\infty)\times\Sigma$ with $\Sigma=\mathbb T^3$ and
\begin{equation}
\label{Kasner-k-original}
g = -d\tilde t\otimes d\tilde t + \tilde t^{2p_1} d\tilde x\otimes
d\tilde x  + \tilde t^{2p_2} d\tilde y\otimes d\tilde y  + \tilde t^{2p_3} d\tilde z\otimes d \tilde z,
\end{equation}
for \emph{Kasner exponents} $p_1$, $p_2$ and $p_3$ in $\mathbb R$ satisfying the Kasner relations 
\begin{equation}
  \label{eq:Kasnerrel}
  \sum_{i=1}^3 p_i=1,\quad \sum_{i=1}^3 p^2_i=1.
\end{equation}
Except for the \emph{flat Kasner cases} $(p_1,p_2,p_3)=(1,0,0)$, $(p_1,p_2,p_3)=(0,1,0)$ or $(p_1,p_2,p_3)=(0,0,1)$, all Kasner
metrics $g$ have curvature singularities in the limit $\tilde t\searrow 0$. 

In this paper, we consider the compressible relativistic Euler equations defined on Kasner spacetimes in the framework of
\cite{frauendiener2003,walton2005} according to which the perfect fluid 
is represented by a single timelike vector field $V^\alpha$ and the Euler equations take the form
\begin{equation}
\label{eq:AAA1}
{A^\delta}_{\alpha\beta}\nabla_\delta V^\beta=0,\quad
A^\delta_{\alpha\beta}
 =-\frac{3 c_s^2+1}{c_s^2} \frac{V_\alpha V_\beta}{V_\lambda V^\lambda} V^\delta+V^\delta g_{\alpha\beta}
  +2{g^\delta}_{(\beta} V_{\alpha)},
\end{equation}
provided $c_s^2>0$. Despite its relevance for cosmology, the case $c_s^2=0$ is excluded from
our results, note the inequality \eqref{eq:stablefluidrestr} from Theorem \ref{thm:informal} below. It is worth noting that this is not due to our use of the formulation \eqref{eq:AAA1} of the Euler equations, which is undefined at $c_s=0$.

In writing \eqref{eq:AAA1}, we have made use of the Einstein summation convention where indices are lowered and
raised with the given Kasner spacetime metric $g$. The derivative $\nabla$ in \eqref{eq:AAA1} is the covariant derivative associated with $g$.
The key property which characterises this form of the Euler equations is that \eqref{eq:AAA1} is explicitly symmetric hyperbolic, that is, the coefficients $A^\delta_{\alpha\beta}$ in \eqref{eq:AAA1} are  symmetric in $\alpha$ and $\beta$ and $A^0_{\alpha\beta}$ is positive-definite (if the fluid is timelike), which turns out to be crucial for the analysis. According to \cite{frauendiener2003,walton2005},
the 
fluid pressure $P$,
density $\rho$ and normalised fluid $4$-vector field $u^\alpha$ can be calculated from $V^\alpha$ via the expressions\footnote{In consistency with \eqref{Kasner-k-original}, the signature of Lorentzian metrics is taken to be
$(-,+,+,+)$ in this paper. The fluid vector field $V^\alpha$ is therefore timelike if and only if $V_\lambda V^\lambda<0$.}
\begin{equation}
  \label{eq:physicsquantitiesfluid}
P=\rho_0 c_s^2 (-V_\lambda V^\lambda)^{-\frac {c_s^2+1}{2c_s^2}},\quad
\rho=\rho_0 (-V_\lambda V^\lambda)^{-\frac {c_s^2+1}{2c_s^2}},\quad u^\alpha=V^\alpha/\sqrt{-V_\lambda V^\lambda}.
\end{equation}
for an arbitrary constant $\rho_0$ carrying the correct physical units of an energy density.

We are now in the position to state an informal version of the main result of this paper. The precise version, which is a combination of two results, is given in Theorems \ref{thm:fluidresult} and \ref{thm:fluidresult2}, and the proof of these theorems can be found in \Sectionref{sec:applications}.  We emphasise in this article we make no symmetry assumptions for the fluid, nor impose isotropy restrictions on the background spacetime. Furthermore, our results continue to hold for the more general family of \emph{Kasner-scalar field background spacetimes}; see Remark~\ref{rem:KSF} below and \cite{ames2019, rodnianski2014,rodnianski2018}. These are spacetimes of the same form as
\eqref{Kasner-k-original} that solve the coupled Einstein-scalar field equations as opposed to the Einstein-vacuum equations. For these spacetimes, the Kasner relations \eqref{eq:Kasnerrel} take a more general form, see \eqref{eq:Kasnerrelsf}, that involves the scalar field strength parameter $A$, and special cases include the isotropic Friedmann-Robertson-Walker spacetimes for $A=\pm\sqrt{2/3}$ as well as the vacuum spacetimes for $A=0$. It was a major breakthrough of  
\cite{rodnianski2014,rodnianski2018,fournodavlos2020b} to establish that Kasner-scalar field spacetimes (in the sub-critical regime) are stable under generic nonlinear perturbations of the Einstein-scalar field equations. The fact that our results here extend to this family of Kasner-scalar field background spacetimes will therefore be crucial in any extension of our line of research here to include coupling to Einstein gravity. However, for the purposes of this paper, there is no loss of generality, as we
argue in  Remark~\ref{rem:KSF}, to restricting our attention to the special case of vacuum Kasner background spacetimes.

\begin{thm}[Informal version] \label{thm:informal}
  Suppose $p_1, p_2, p_3$ are Kasner exponents satisfying \begin{equation}
  \label{eq:stablefluidrestr}
  1>c_s^2>\max\{p_1,p_2,p_3\}
\end{equation} 
and $W^0_*>0$, $W^1_*$, $W^2_*$ and $W^3_*$ are given functions on $\Tbb^3$ at $\tilde t=0$ that together define the asymptotic data for the Euler equations.
Then near the singularity at $\tilde t=0$ of the Kasner vacuum solution  with exponents $p_1$, $p_2$ and $p_3$ given by \eqref{Kasner-k-original} -- \eqref{eq:Kasnerrel}, there exists a  
solution $V^\alpha$ of the Euler equations \eqref{eq:AAA1}
with equation of state \eqref{eq:EulerEOS2} that can be represented as 
\begin{equation}
  \label{eq:fluidorthonormal}
  V^\alpha=\tilde t^{c_s^2}\left(W^0 e_0^\alpha+\tilde t^{c_s^2-p_1} W^1 e_1^\alpha+\tilde t^{c_s^2-p_2} W^2 e_2^\alpha+\tilde t^{c_s^2-p_3} W^3 e_3^\alpha\right)
\end{equation}
for some uniformly bounded functions $W^0$, $W^1$, $W^2$, and $W^3$ on $(0,T_0]\times \Tbb^3$ that agree in the limit $\tilde t\searrow 0$ with the asymptotic
data $W^0_*>0$, $W^1_*$, $W^2_*$, and $W^3_*$, respectively, and $V^\alpha$ is everywhere timelike including in the limit $\tilde t\searrow 0$.
Here $(e_0^\alpha, e_1^\alpha, e_2^\alpha, e_3^\alpha)$ is the orthonormal frame of the Kasner spacetime given by 
\begin{equation}
  \label{eq:KasnerONF}
  e_0=\partial_{\tilde t},\quad e_1=\tilde t^{-p_1}\partial_{\tilde x},\quad e_2=\tilde t^{-p_2}\partial_{\tilde y},\quad e_3=\tilde t^{-p_3}\partial_{\tilde z}.
\end{equation}
Moreover this solution is  unique within the class of solutions with the asymptotics \eqref{eq:fluidorthonormal}, and it is non-linearly stable against sufficiently small perturbations and the behaviour of the resulting perturbed solution again has the asymptotics \eqref{eq:fluidorthonormal}.
\end{thm}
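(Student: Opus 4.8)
\emph{Proof plan.} The strategy is the one announced in the abstract: recast the Euler equations \eqref{eq:AAA1} on the Kasner background as a symmetric hyperbolic Fuchsian system, apply the paper's new singular initial value problem theory to construct the family, and invoke the regular global Cauchy theory of \cite{beyer2019a} for stability.

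First I would substitute the ansatz \eqref{eq:fluidorthonormal} for $V^\alpha$, written in the Kasner orthonormal frame \eqref{eq:KasnerONF}, into \eqref{eq:AAA1}. Because every explicit power of $\tilde t$ produced by the Kasner metric and its frame is matched by a compensating factor in \eqref{eq:fluidorthonormal}, one expects all singular $\tilde t$-factors to cancel, leaving a first-order quasilinear system for $\mathbf{W}=(W^0,W^1,W^2,W^3)$ whose principal part inherits the symmetric hyperbolicity of \eqref{eq:AAA1} (with $A^0_{\alpha\beta}$ positive definite on the timelike cone). Passing to a logarithmic-type time coordinate $t$ with $\tilde t\searrow 0\Leftrightarrow t\searrow 0$ then puts the system in Fuchsian form, i.e.\ a symmetric hyperbolic system with a singular term $\tfrac1t\mathbf{B}\mathbf{W}$ and regular remainders, on a slab $(0,\finaltime]\times\mathbb T^3$. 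Since the Kasner frame components individually blow up as $\tilde t\to0$, part of the bookkeeping here is to confirm that these divergences genuinely cancel rather than merely being absorbed into definitions.

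Second I would verify the structural hypotheses of the abstract singular IVP existence theorem against this system. The decisive point is the spectrum of the Fuchsian matrix $\mathbf{B}$ evaluated on the leading-order asymptotic configuration: its relevant eigenvalues are governed by the quantities $c_s^2-p_i$, and the block/positivity condition the energy estimates require to close is precisely $c_s^2>\max\{p_1,p_2,p_3\}$, while $c_s^2<1$ keeps the coefficient $(3c_s^2+1)/c_s^2$ finite and controls the nonlinearity — so \eqref{eq:stablefluidrestr} is exactly the admissible regime. Applying the theorem yields, for each choice of asymptotic data $W^0_*,\dots,W^3_*$ (with $W^0_*$ bounded away from zero), a unique uniformly bounded solution $\mathbf{W}$ on $(0,\finaltime]$ with $W^i\to W^i_*$ as $t\searrow0$. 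Timelikeness, including in the limit, then follows because $V_\lambda V^\lambda=\tilde t^{2c_s^2}\bigl(-(W^0)^2+\sum_i\tilde t^{2(c_s^2-p_i)}(W^i)^2\bigr)$ and $c_s^2-p_i>0$ force this to tend to $-\tilde t^{2c_s^2}(W^0_*)^2<0$; uniqueness within the class \eqref{eq:fluidorthonormal} is the uniqueness half of the same theorem.

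Third, for nonlinear stability I would fix one solution of the family, prescribe Cauchy data on a slice $t=t_0>0$ that is a small perturbation of that solution's restriction, and apply the global existence result of \cite{beyer2019a} for the regular Cauchy problem associated to this Fuchsian system: it propagates the perturbation all the way down to $t=0$ with quantitative control forcing the same leading asymptotics \eqref{eq:fluidorthonormal}, and simultaneously identifies the perturbed solution's own nearby asymptotic data, so that it coincides with another member of the singular IVP family. Translating back through the change of variables gives the claim for $V^\alpha$. The main obstacle I anticipate is the Fuchsian reduction itself — arranging the dependent variables (the rescaling exponents handed to us by \eqref{eq:fluidorthonormal}) and the time coordinate so that symmetric hyperbolicity survives and, above all, so that $\mathbf{B}$ has the sign structure the abstract theorem demands; verifying that last property comes down to an eigenvalue computation whose favourable outcome is equivalent to \eqref{eq:stablefluidrestr}.
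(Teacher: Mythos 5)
Your high-level strategy---rescale to a Fuchsian form, apply the singular IVP theorem, use the backward (regular Cauchy) theory of \cite{beyer2019a} for stability, and observe that \eqref{eq:stablefluidrestr} is precisely the positivity condition on the Fuchsian matrix $\Bc$ while $c_s^2<1$ keeps $\Gamma_1<1$---matches the paper's approach and you have the right mechanism for timelikeness and the right suspect for the eigenvalue computation (the $\Gamma_i$ of \eqref{eq:GammaDef} are $(c_s^2-p_i)/(1-p_1)$, so $\Gamma_3>0\Leftrightarrow c_s^2>p_3$).

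The genuine gap is in your second step. \Theoremref{thm:mainresult} only constructs solutions with \emph{vanishing} asymptotic data; to get nonzero data $W^i_*$ you must apply it to a remainder $u=U-U_*$, and the theorem's crucial hypothesis \eqref{eq:forwardsmallnessTh} demands that $|s|^{p-1}\|\Ft(s,\cdot)\|_{H^k}$ be integrable near $s=0$, where $\Ft$ is essentially the defect of $U_*$ as an approximate solution. Your plan takes $U_*$ to be the naive leading-order term (asymptotic data times the appropriate powers of $t$, i.e.\ $U_0$ in the paper's Lemma~\ref{lem:iterateLOT}), and this generically fails the integrability condition: the defect decays only like $|t|^{-\Gamma_1}+|t|^{2\Gamma_3-1}$, which is not enough once multiplied by $|s|^{p-1}$. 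The paper's proof therefore inserts a whole extra phase (Step~3, Lemma~\ref{lem:iterateLOT}): an iteration solving the approximate transport equations \eqref{eq:EulereqsDecA} to produce successively more accurate leading-order terms $U_0,U_1,\ldots,U_{\ell-1}$, taking $U_*=U_{\ell-1}$. The number of steps $\ell$ needed grows as the background becomes more anisotropic (i.e.\ as the $\Gamma_i$ differ more), each step costs one order of spatial differentiability, and this is exactly where the hypotheses $\ell>\Gamma_1/q$ and $v_*\in H^{k+\ell}$ in \Theoremref{thm:fluidresult} come from. Without this iteration, your application of the abstract theorem simply does not go through. A parallel subtlety exists in your Step~3: Theorem~\ref{thm:Fuchsian-IVP} directly supplies a limit at $t=0$ only for the $\Pbb^\perp$-component ($u^0$); extracting limits for the remaining components and identifying the perturbed solution as a member of the singular IVP family requires the separate inductive bootstrap of Proposition~\ref{prop:EulerSolveCPCorStrong} followed by a uniqueness comparison --- it is not ``simultaneous'' with the backward estimate as you suggest.
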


The main physical restriction in Theorem \ref{thm:informal} is \eqref{eq:stablefluidrestr}, which we refer to as the \emph{stability condition}. This condition determines an additional restriction on the fluid sound speed $c_s$ and the Kasner exponents $p_1$, $p_2$ and $p_3$ beyond the non-negativity of $c_s$ and the Kasner relation \eqref{eq:Kasnerrel}.
According to this condition,  the square of the speed of sound must be smaller than $1$; that is, the fluid is \textit{not} stiff\footnote{The case of (irrotational) stiff fluids is handled in \cite{rodnianski2014,rodnianski2018,fournodavlos2020b}}, and greater than all Kasner exponents. Before we explain the meaning of the lower bound on the sound speed, let us assume for the moment that \eqref{eq:stablefluidrestr} holds. For this case, Theorem \ref{thm:informal} guarantees the existence of perfect fluid solutions described by \eqref{eq:fluidorthonormal} near the big bang singularity at $\tilde t=0$ represented in terms of the orthonormal frame \eqref{eq:KasnerONF} with respect to a (by assumption fixed) background Kasner spacetime. Physically, this orthonormal frame can be interpreted as a \emph{cosmological reference frame}. The functions $W^0$, $W^1$, $W^2$ and $W^3$ in \eqref{eq:fluidorthonormal} distinguish the different fluid solutions and are all convergent at $\tilde t=0$.
The main  fluid dynamics near $\tilde t=0$ is therefore described by the powers of $\tilde t$ in \eqref{eq:fluidorthonormal}, most notably the powers $\tilde t^{c_s^2-p_i}$ in front of the spatial fluid components. The fact that these powers are all positive and therefore bounded at $\tilde t=0$ by virtue of \eqref{eq:stablefluidrestr} is a first indication of the importance of the stability condition. As a consequence of  \eqref{eq:physicsquantitiesfluid} and \eqref{eq:fluidorthonormal},  we note that
\begin{equation}
  \label{eq:pressureblowup}
  P\sim \tilde t^{-(1+c_s^2)}, \quad \rho\sim \tilde
  t^{-(1+c_s^2)},
\end{equation}
and
\begin{equation}
  \label{eq:fluidorthonormalnormal}
  u^\alpha=\frac{e_0^\alpha+Q^1 e_1^\alpha+Q^2 e_2^\alpha+Q^3 e_3^\alpha}{\sqrt{1-(Q^1)^2-(Q^2)^2-(Q^3)^2}}
\end{equation}
where
\begin{equation}
  \label{eq:defQ}
  Q^i:={\tilde t}^{c_s^2-p_i}\,\frac{W^i}{W^0},\quad i=1,2,3.
\end{equation}
It follows from \eqref{eq:fluidorthonormalnormal} that the quantities $Q^i$  can be interpreted as a measure for the \emph{peculiar motions} of the fluids relative to the cosmological reference frame. Provided the stability condition holds, we deduce from Theorem~\ref{thm:informal} that $Q^i\rightarrow 0$ as $\tilde t \searrow 0$, and it follows that all the fluids are \emph{asymptotically co-moving}; i.e., the peculiar motions die out at $\tilde t=0$. This is consistent with one of the key ideas of the \emph{standard model of cosmology}, namely that  peculiar motions of matter should be small.

It is clear that the stability condition imposes a significant restriction on the fluid sound speed. For instance, it follows from \eqref{eq:Kasnerrel} that
\[\max\{p_1,p_2,p_3\}\ge 2/3,\]
and hence the lower bound for $c_s^2$ in \eqref{eq:stablefluidrestr}, notably, rules out  {radiation fluids}  $c_s^2=1/3$ -- one of the most popular matter models for the early universe\footnote{According to Remark~\ref{rem:KSF}, it is interesting to notice that in the special case of the FLRW Kasner-scalar field background, the radiation fluid case  $c_s^2=1/3$ is \emph{exactly} at the border of the stability region}. It is therefore natural to question what happens to the fluids when \eqref{eq:stablefluidrestr} is violated. In general this is an open question. But the following special case provides strong evidence that the fluid behaviour is significantly different when \eqref{eq:stablefluidrestr} is violated (without loss of generality we  assume that $p_3\ge p_2\ge p_1$ in the following). For this special case, we assume that $W^1=W^2=0$ and that $W^0$ and $W^3$ are functions of $\tilde t$ only; in particular, this means that the fluid is spatially homogeneous and only flows in the direction of the largest Kasner exponent. It is then possible to integrate the Euler equations to obtain solutions in the following implicit form\footnote{This implicit solution is derived without imposing the second Kasner constraint in \eqref{eq:Kasnerrel}. In particular, the same result is obtained when the background is a Kasner-scalar field spacetime with \eqref{eq:Kasnerrelsf}.}
\begin{equation}
  \label{eq:explicitEulersol}
  \frac{\bigl|Q^3(\tilde t)\bigr|}{\bigl|1-(Q^3(\tilde t))^2\bigr|^{(1-c_s^2)/2}}=C {\tilde t}^{c_s^2-p_3},\quad \tilde t>0,
\end{equation}
for a constant $C\ge 0$ where $Q^3$ is as defined above by \eqref{eq:defQ}. Observe here how the factor ${\tilde t}^{c_s^2-p_3}$ whose behaviour we control by means of the stability condition appears, and that the fluid is co-moving; i.e., $Q^3\equiv 0$ if and only if $C=0$. Let us suppose that $C\not=0$ so that the fluid has some peculiar motion. If the stability condition holds, i.e., if $p_3<c_s^2<1$, the right hand side converges to zero at $\tilde t=0$ which means by virtue of \eqref{eq:explicitEulersol} that $Q^3\rightarrow 0$ in consistency with our result before. However, if the stability condition is violated and $c_s^2<p_3<1$, then the right-hand side diverges in this limit (since $C\neq0$) from which it follows that $1-(Q^3)^2\rightarrow 0$. The peculiar motions of the fluids are therefore unstable and the fluid becomes ``asymptotically null'' (as suggested by the normalisation factor in \eqref{eq:fluidorthonormalnormal}). This behaviour is not consistent with the standard model of cosmology and the before-mentioned matter does not matter hypothesis. The borderline case $c_s^2=p_3$ is characterised by $Q^3=const$ in this special case. We remark that the critical dependence of the fluid dynamics on the signs of $c_s^2-p_i$ was first observed in \cite{beyer2017,beyer2019a,beyer2020b} while  \cite{hervik2004} provides more general analyses of \emph{tilted fluids} in spatially homogeneous settings.

Even though more work is clearly needed to fully support our claims beyond \Theoremref{thm:informal}, in particular, coupling to Einstein gravity needs to be taken into account, we nevertheless assert that \eqref{eq:stablefluidrestr} is \emph{both sufficient and necessary} to obtain a stable fluid flow at the big bang singularity $\tilde t=0$ with the potential for interesting critical dynamics when $c_s^2=\max\{p_1,p_2,p_3\}$. If this turns out to be correct, then this would have important implications for physics as it might rule out some
of the standard matter models used in cosmology to describe the early universe, and in particular, the radiation fluid $c_s^2=1/3$.

\subsubsection*{Forward and backward Fuchsian methods}

The main results of this paper, in particular, Theorems~\ref{thm:informal}, \ref{thm:fluidresult} and \ref{thm:fluidresult2}, are established through the use of complementary forward and backward Fuchsian techniques. The
\emph{forward Fuchsian method} is a well-established technique that has been used extensively to establish the existence of solutions to singular initial value problems for Fuchsian systems; for example, see \cite{ames2013a,ames2013b,ames2017,andersson2001,beyer2010b,beyer2011,beyer2012a,beyer2017,choquet-bruhat2004,claudel1998,damour2002,fournodavlos2020,heinzle2012,isenberg1999,isenberg2002,kichenassamy1998,klinger2015,petersen2018,rendall2000,rendall2004,rendall2004a,stahl2002}. Its purpose here is to prove the existence of solutions $V^\alpha$ of the form \eqref{eq:fluidorthonormal} with functions $W^0$, $W^1$, $W^2$, $W^3$ that converge at $\tilde t=0$ to specific chosen limits $W^0_*$, $W^1_*$, $W^2_*$, $W^3_*$ -- the asymptotic data. This yields a family of fluid solutions parametrised by four functional degrees of freedom. The precise result is given in Theorem~\ref{thm:fluidresult}. This method is based on \emph{forward} evolutions away from the singular time $\tilde t=0$ launching from $\tilde t=0$.
The \emph{backward Fuchsian method} on the other hand is a relatively new technique, first introduced in \cite{oliynyk2016} and further developed in \cite{beyer2019a,fajman2020a}, for establishing the global existence of solutions to initial value problems for Fuchsian systems of equations; see \Sectionref{sec:FuchsianCauchyProblem} for a brief summary of this method. In this work here, the purpose of the backward method is to construct perturbations of the solutions obtained by the forward method globally backwards in time towards the singular time $\tilde t=0$ and to investigate the limits at $\tilde t=0$. In fact we show that small perturbations of any solution of the forward problem given by asymptotic data $W^0_*$, $W^1_*$, $W^2_*$, $W^3_*$ converges to limits $\tilde W^0_*$, $\Wt^1_*$, $\Wt^2_*$, $\Wt^3_*$ close to the asymptotic data. Our results establish that the forward and backward solutions of the Euler equations are, in a sense, in one-to-one correspondence -- a question pioneered by Ringstr\"om \cite{ringstrom2017,ringstrom2021,ringstrom2021a}. However, the important problem of characterising this correspondence as an invertible continuous map with respect to a given topology remains open. We plan on returning to this question in future work.

In order to illustrate how these techniques are employed in this article, it suffices to consider the following model equation 
\begin{equation}
  \label{eq:6model6}
  \partial_t u(t)=\frac 1t b u(t)+F(t),\quad t>0,
\end{equation}
where here, $b\in \Rbb$ is a constant and $F$ is some sufficiently smooth function on $t>0$. This ordinary differential equation is the prototype of a \emph{Fuchsian equation}; a class of in general nonlinear partial differential equations introduced in \Sectionref{sec:symhypFuchs}. For this simple problem, the solution is clearly
\begin{equation}
  \label{eq:6model7}
  u(t)=t^b\Bigl(t_*^{-b} u_*+\int_{t_*}^t s^{-b} F(s) ds\Bigl),\quad t>0,
\end{equation}
where $t_*>0$ and $u_*\in\R$ are arbitrary.  Since $u(t_*)=u_*$, we can interpret \eqref{eq:6model7} as the solution of the initial value problem for \eqref{eq:6model6} with initial data $u_*$ imposed at the regular time $t=t_*>0$.  We refer to this initial value problem as the \emph{backward problem} with the idea that the initial data is imposed at a regular time $t_*>0$ and the Fuchsian equation is solved \emph{backwards in time} all the way down to the singular time  at $t=0$. By a \emph{backward method}, we then mean a method to construct solutions to the backward problem and analyse their asymptotics as $t\searrow 0$.

In contrast, the \emph{forward problem}, which we also often refer to as the \emph{singular initial value problem}, involves solving the Fuchsian equation forward in time  
starting from prescribed \emph{asymptotic data} at $t=0$. A \emph{forward method} then refers to a method to construct solutions of the forward problem.
In the case of \eqref{eq:6model6}, assuming that $F(t)=\Ord(t^{b-1+\epsilon})$ for some $\epsilon>0$, this corresponds to expressing the solution given by \eqref{eq:6model7} as
\begin{equation}
  \label{eq:6model8} 
  u(t)=t^b\Bigl(\ut_*+\int_{0}^t s^{-b} F(s) ds\Bigl), \quad t>0,
\end{equation}
where here, the constant $\ut_*$ defines the \textit{asymptotic data}.
The \emph{leading order term}
$u_0(t)=t^{b}\ut_*$ of the solution \eqref{eq:6model8} is therefore parametrised by the asymptotic data while the \emph{remainder} $w(t)= t^b\int_{0}^t s^{-b} F(s) ds$ is of higher order at $t=0$, i.e., $w(t)=O(t^{b+\epsilon})$.

In the case of the model equation \eqref{eq:6model6}, it is clear that both the backward and the forward problems are equivalent provided that $F(t)=\Ord(t^{b-1+\epsilon})$, and consequently, they contain the same information about the class of all solutions. While it has been established that this equivalence between the forward and backward problems also holds for  certain Fuchsian systems of partial differential equations \cite{isenberg1990,ringstrom2017, ames2019}, it
is definitely not the case that these two complementary problems always contain the same information \cite{ringstrom2008, ringstrom2017,fajman2020a}. In the case of perfect fluids on Kasner spacetimes, however, they do as we prove in this article, and this leads to the 
detailed description of the singular dynamics given in Theorems \ref{thm:informal}, \ref{thm:fluidresult} and \ref{thm:fluidresult2}.

In \Sectionref{sec:SingSymHypSyst}, we develop a new forward method to solve the singular initial value problem for symmetric hyperbolic systems of Fuchsian equations, which yields a new existence result given in  \Theoremref{thm:mainresult} that has two significant advantages over current results. The first advantage is that  \Theoremref{thm:mainresult} allows the coefficients of the spatial derivatives in the Fuchsian equations to contain $1/t$ singular terms. Spatial derivatives are therefore not necessarily required to be negligible at $t=0$. While it is expected that this will open up a significant range of new applications, we do not exploit this particular property here.  The second advantage of our new existence result, which \emph{is} exploited here, is that it is formulated using the same framework that is employed in \cite{beyer2019a} to solve backward problem for Fuchsian equations. The importance of this is that it makes it possible, for the first time, to efficiently study the forward and the backward problems together using the same methodology for large classes of nonlinear partial differential equations. The application to the Euler equations in this paper demonstrates the enormous potential that the forward and backward Fuchsian methods have when used in combination to characterise the asymptotics of solutions to nonlinear partial differential equations.

\subsubsection*{Outline of the paper}
The technical arguments of this paper consist  of two distinct parts that are contained in \Sectionref{sec:SingSymHypSyst} and \Sectionref{sec:applications}. In \Sectionref{sec:SingSymHypSyst}, we consider the forward problem for general \emph{symmetric hyperbolic Fuchsian systems}, and we state and prove a new existence and uniqueness result for the forward problem. The precise statement of the existence and uniqueness result is given in Theorem \ref{thm:mainresult}. This forward method together with the backward method from \cite[Theorem~3.8]{beyer2019a}, see Theorem \ref{thm:Fuchsian-IVP} from the appendix for details, are then applied to the Euler equations on Kasner backgrounds in \Sectionref{sec:applications}. In particular, we use Theorem \ref{thm:mainresult} to prove an existence and uniqueness result in \Sectionref{sec:SIVPEuler} for the singular initial value problem for the Euler equations. This yields solutions with the full expected asymptotic data degrees of freedom and leads to a detailed description of the asymptotics of a large class of fluid solutions. In \Sectionref{sec:EulerPert}, we complement this by establishing the nonlinear stability of these solutions via an application of Theorem \ref{thm:Fuchsian-IVP}. Specifically,  we show that for each of the solutions of the forward problem there is an open set of nearby solutions of the backward problem with the same qualitative asymptotics. Combining this with the uniqueness result for the forward method, we finally establish that each \emph{perturbation of a solution of the forward problem of the Euler equations is again a solution of the forward problem}.

\section{Symmetric hyperbolic Fuchsian systems and the singular
  initial value problem}
\label{sec:SingSymHypSyst}

\subsection{Preliminaries\label{prelim}}

The definitions and notation set out here agree, for the most part, with those of \cite{beyer2019a}.

\subsubsection{Spatial manifolds, coordinates, indexing and partial derivatives}
Throughout this article, unless stated otherwise, $\Sigma$ will denote a closed $n$-dimensional manifold, lower case Latin indices, e.g. $i,j,k$, will run from $1$ to $n$ and will index coordinate indices associated to a local coordinate system $x=(x^i)$ on
$\Sigma$, and $t$ will denote a time coordinate on intervals of the form $[T_0,T_1)$ or $[T_0,T_1]$ for $T_1\le 0$. Note that this convention agrees with that of \cite{beyer2019a} where the time $t$ is taken to be  negative. It worthwhile noting that this convention is opposite to that employed in the introduction where time is assumed to be positive. However, this cause no real difficulties since it is trivial to change between the two conventions using the transformation $t\mapsto -t$.

Partial derivatives with respect to the coordinates will be denoted by
\begin{equation*}
\del{t} = \frac{\partial \;}{\partial t} \AND \del{i} = \frac{\partial \;}{\partial x^i}. 
\end{equation*}

\subsubsection{Vector Bundles\label{Vbundle}}
In the following, we will let $\pi \: :\: V \longrightarrow \Sigma$
denote a rank $N$ vector bundle with fibres $V_x = \pi^{-1}(\{x\})$, $x\in \Sigma$, and use $\Gamma(V)$ to denote the set of all smooth sections of $V$. 
We will assume that $V$ is equipped
with a time-independent connection\footnote{$[\del{t},\nabla]=0$.} $\nabla$, and a time-independent, compatible\footnote{$\del{t}h=0$ and $\nabla_X (h(u,v)) = h(\nabla_X u,v)
+h(u,\nabla_X v)$ for all  $X\in \mathfrak{X}(\Sigma)$ and $u,v\in \Gamma(V)$.}, positive definite metric $h \in 
\Gamma(T^0_2(V))$. 
We will also denote the vector bundle of linear operators that act on the fibres of $V$ by $L(V)=\cup_{x\in \Sigma} L(V_x) \cong V\otimes V^*$. The \textit{transpose} of $A_x \in L(V_x)$, denoted by $A_x^{\tr}$, is then defined as the unique element of $L(V_x)$ satisfying
\begin{equation*}
h(x)(A_x^{\tr}u_x,v_x)= h(x)(u_x,A_x v_x), \quad \forall \: u_x,v_x\in V_x. 
\end{equation*}
Furthermore, given two vector bundles $V$ and $W$ over $\Sigma$, we will
use $L(V,W) = \cup_{x\in \Sigma} L(V_x,W_x) \cong W \otimes V^*$
to denote the vector bundle of linear maps from the fibres
of $V$ to the fibres of $W$.

Additionally, we employ the standard notation $V\oplus W$ for the direct sum, which has fibres $(V\oplus W)_x = V_x\times W_x$  and where addition and scalar multiplication are defined via $\lambda (v_1,w_2)+(v_2,w_2)=(\lambda v_1+w_2,\lambda v_1+w_2)$. Furthermore, if $\Vc\subset V$ and $\Wc\subset W$ are subsets satisfying $\pi(\Vc)=\pi(\Wc)=\Sigma$, then we define
\begin{equation*}
    \Vc\oplus \Wc = \bigcup_{x\in\Sigma}\Vc_x\times \Wc_x \subset V\oplus W
\end{equation*}
where $\Vc_x=V_x\cap\Vc$ and $\Wc_x=W_x\cap \Wc$.
We will often abuse notation and write elements of  $\Vc\oplus \Wc$ as $(v,w)$ where $v\in \Vc$, $w\in\Wc$ and $\pi(w)=\pi(v)$.

For any given vector bundle over $\Sigma$, e.g. $V$, $L(V)$, $V\otimes V$, etc., we will generally use $\pi$ to denote the canonical projection onto $\Sigma$.

Here and below, unless stated otherwise, we will use upper case Latin indices, i.e., $I,J,K$, that run from $1$ to $N$ to index elements of $V$ with respect to a local basis $\{e_I\}$ of $V$. By introducing a local basis $\{e_I\}$, we can represent $u\in \Gamma(V)$ and the inner-product
$h$ locally as
\begin{equation*}
u = u^I e_I \AND
h = h_{IJ}\theta^I\otimes\theta^J,
\end{equation*}
respectively,
where $\{\theta^I\}$ is local basis of $V^*$ determined from the basis $\{e_I\}$ by duality.
Moreover, assuming that the local coordinates $(x^i)$ and the local basis $\{e_I\}$ are defined on the same open region of $\Sigma$,
we can represent the covariant derivative  $\nabla u \in \Gamma(V\otimes T^*M)$ locally by
\begin{equation*}
\nabla u = \nabla_i u^I e_I \otimes dx^i,
\end{equation*}
where 
\begin{equation*}
\nabla_i u^I = \del{i} u^I + \omega_{iJ}^I u^J,
\end{equation*}
and the $\omega_{iJ}^I$ are the connection coefficients determined, as usual, by
\begin{equation*} 
\nabla_{\del{i}} e_J = \omega_{iJ}^I e_I.
\end{equation*} 

We further assume that $\Sigma$ is equipped with a time-independent\footnote{$\del{t}g=0$.} Riemannian metric\footnote{This reference metric $\mathtt g$ is in general unrelated to the spacetime metric $g$.} $\mathtt g\in \Gamma(T^0_2(\Sigma))$ that is given locally in coordinates $(x^i)$ by
\begin{equation*}
\mathtt g = \mathtt g_{ij} dx^i\otimes dx^j.
\end{equation*}
Since the metric determines the Levi-Civita
connection  on the tensor bundle $T^r_s(M)$ uniquely, we can, without confusion, use $\nabla$ to also denote the Levi-Civita connection. The connection on $V$ and
the Levi-Civita connection on $T^r_s(M)$ then determine a connection on the
tensor product $V\otimes T^r_s(\Sigma)$ in a unique fashion, which we will again denote by 
$\nabla$. This connection is compatible with the positive definite inner-product induced on $V\otimes T^r_s(\Sigma)$ by the inner-product $h$ on $V$
and the Riemannian metric $g$ on $\Sigma$. 
With this setup, the \textit{covariant derivative of order $s$} of a section $u \in \Gamma(V)$, denoted $\nabla^s u$, defines an
element of $\Gamma(V\otimes T^0_s(\Sigma))$ that is given locally by
\begin{equation*}
\nabla^s u = \nabla_{i_s} \cdots \nabla_{i_2} \nabla_{i_1} u^I e_I \otimes dx^{i_1}\otimes dx^{i_2}\otimes \cdots \otimes dx^{i_s}.
\end{equation*}
When $s=2$,  the components of $\nabla^2 u$ can be computed using the formula
\begin{equation*}
\nabla_j\nabla_i u^I = \del{j}\nabla_i u^I -\Gamma_{ji}^k\nabla_k u^I + \omega_{j J}^I \nabla_i u^J,
\end{equation*}
where $\nabla_i u^I$ is as defined above, and $\Gamma_{ij}^k$ are the Christoffel symbols of $\mathtt g$. 
Similar formulas exist for the higher covariant derivatives.

\subsubsection{Inner-products and operator inequalities}
We define the \textit{norm} of $v\in V_x$, $x\in \Sigma$, by
\begin{equation*}
|v|^2 = h(x)(v,v).
\end{equation*}
Using this norm, we then define, for $R>0$, the \textit{bundle of open balls of radius $R$ in $V$} by 
\begin{equation*}
B_R(V) = \{\, v\in V \, | \, |v|<R\,\}.
\end{equation*}
Given elements $v, w \in V_x\otimes T^0_s(M_x)$, we can expand them in a local basis as
\begin{equation*}
v= v^I_{i_1 i_2 \cdots i_s} e_I\otimes dx^{i_1}\otimes dx^{i_2}\otimes \cdots \otimes dx^{i_s} \AND w= w^I_{i_1 i_2 \cdots i_s} e_I\otimes dx^{i_1}\otimes dx^{i_2}\otimes \cdots \otimes dx^{i_s},
\end{equation*}
respectively.
Using these expansions, we define the inner-product of $v$ and $w$ by
\begin{equation*}
\ipe{v}{w} = g^{i_1 j_1} g^{i_2 j_2} \cdots g^{i_s j_s} h_{IJ}v^I_{i_1 i_2 \cdots i_s}w^J_{j_1 j_2 \cdots j_s},
\end{equation*}
while the norm of $v$ is defined via
\begin{equation*}
|v|^2 = \ipe{v}{v}.
\end{equation*}

For $A\in L(V_x)$, the \textit{operator norm} of $A$, denoted $|A|_{\op}$, is defined by
\begin{equation*}
|A|_{\op} =\sup\bigl\{\, |\ipe{w}{A v}| \, \bigl| \, w,v \in B_1(V_{x}) \,\bigr\}. 
\end{equation*}
We also define  a related norm for elements $A\in L(V_x) \otimes T_x^*M$  by
\begin{equation*}
|A|_{\op} = \sup\bigl\{\, |\ipe{v}{Aw}| \, \bigl| \, (v,w) \in B_1(V_x \otimes T_x^*M)\times B_1(V_{x}) \,\bigr\}. 
\end{equation*}
From this definition, it not difficult to verify that
\begin{equation*}
|A|_{\op} = \sup \bigl\{\, |\ipe{v}{Aw}| \, \bigl|\, (v,w)\in B_1(V_x\otimes T^0_{s+1}(T_x M))\times B_1(V_x\otimes T^0_{s}(T_x M))\,\bigr\}.
\end{equation*}
This definition can also be extended to elements of $A\in L(V_x) \otimes T_x^*M\otimes T_x M$ in a similar fashion; that is,
\begin{equation*}
|A|_{\op} = \sup\bigl\{\, |\ipe{v}{Aw}| \, \bigl| \, (v,w) \in B_1(V_x \otimes T_x^*M\otimes T_x M)\times B_1(V_{x}) \,\bigr\},
\end{equation*}
where again we have that
\begin{equation*}
|A|_{\op} = \sup \bigl\{\, |\ipe{v}{Aw}| \, \bigl|\, (v,w)\in B_1(V_x\otimes T^1_{s+1}(T_x M))\times B_1(V_x\otimes T^0_{s}(T_x M))\,\bigr\}.
\end{equation*}
Finally, for $A,B \in L(V_x)$, we define
\begin{equation*}
A\leq B
\end{equation*}
if and only if 
\begin{equation*}
\ipe{v}{A v} \leq \ipe{v}{B v}, \quad \forall\: v \in  V_x.
\end{equation*}

\subsubsection{Constants, inequalities and order notation}
We will use the letter $C$ to denote generic constants whose exact dependence on other quantities is not specified and whose value may change from line to line. For such constants,
we will often employ 
the standard notation
\begin{equation*}
a \lesssim b
\end{equation*}
for inequalities of the form
\begin{equation*}
a \leq Cb.
\end{equation*}
On the other hand, when the dependence of the constant on other inequalities needs to be specified, for
example if the constant depends on the norm $\norm{u}_{L^\infty}$, we use the notation
\begin{equation*}
C=C(\norm{u}_{L^\infty}).
\end{equation*}
Constants of this type will always be \textit{non-negative, non-decreasing, continuous functions of their arguments}.

We now turn to defining a notion of the order notation that is applicable for maps from one vector bundle to another. This notation will be used frequently in the proof of various nonlinear estimates. The definition   
begins with four vector bundles $V$, $W$, $Y$ and $Z$ that sit over $\Sigma$, and
maps 
\begin{equation*}
f\in C^0\bigl([T_0,0),C^\infty(\Wc\oplus B_R(V),Z)\bigr)
\AND
g\in C^0\bigl([T_0,0),C^\infty(B_{R}(V),Y)\bigr),
\end{equation*}
where $\Wc\subset W$ is open and $\pi(\Wc)=\Sigma$.
We then say that
\begin{equation*}
f(t,w,v) = \Ordc(g(t,v))  
\end{equation*}
if there exists a $\Rt \in (0,R)$ and a map
\begin{equation*}
\ft \in C^0\bigl([T_0,0),C^\infty(\Wc\oplus B_{\Rt}(V),L(Y,Z))\bigr)
\end{equation*}
such that\footnote{Here, we are using $\nabla_{w,v}$ to denote a covariant derivative operator on the product manifold $W\times V$. Since $\Sigma$ is compact, we know
that such a covariant derivative always exists and it does not matter for our purposes which one is employed.}
\begin{gather*}
f(t,w,v) = \tilde{f}(t,w,v)g(t,v),\\
|\ft(t,w,v)| \leq 1 \AND
|\nabla^s_{w,v}\ft(t,w,v)| \lesssim 1
\end{gather*}
for all $(t,w,v) \in  [T_0,0) \times \Wc \oplus B_{\Rt}(V)$ and $s\geq 1$. For situations where we want to bound $f(t,w,v)$ by $g(t,v)$ up to an undetermined constant of proportionality,
we define
\begin{equation*}
f(t,w,v) = \Ord(g(t,v))  
\end{equation*}
if there exists a $\Rt \in (0,R)$ and a map
\begin{equation*}
\ft \in C^0\bigl([T_0,0),C^\infty(\Wc\oplus B_R(V),L(Y,Z))\bigr)
\end{equation*}
such that 
\begin{gather*}
f(t,w,v) = \tilde{f}(t,w,v)g(t,v)
\intertext{and}
|\nabla^s_{w,v}\ft(t,w,v)| \lesssim 1
\end{gather*}
for all $(t,w,v) \in  [T_0,0) \times \Wc \oplus B_{\Rt}(V)$ and $s\geq 0$.

Finally, we use the notation $C_b^0([0,T),C^\infty(\Wc,Z))$ to denote the subspace of $C^0([0,T),C^\infty(\Wc,Z))$ consisting of all the maps $f\in C^0([0,T),C^\infty(\Wc,Z))$ satisfying $f(t,w)=\Ord(1)$.

\subsubsection{Sobolev spaces}
The $W^{k,p}$, $k\in \Zbb_{\geq 0}$, norm of a section $u\in \Gamma(V)$ is defined by
\begin{equation*}
\norm{u}_{W^{k,p}(\Sigma)} = \begin{cases} \begin{displaystyle}\biggl( \sum_{\ell=0}^k \int_{\Sigma} |\nabla^{\ell} u|^p \nu_g\biggl)^{\frac{1}{p}}  \end{displaystyle} & \text{if $1\leq p < \infty $} \\
 \begin{displaystyle} \max_{0\leq \ell \leq k}\sup_{x\in \Sigma}|\nabla^{\ell} u(x)|  \end{displaystyle} & \text{if $p=\infty$}
\end{cases},
\end{equation*}
where $\nu_g \in \Omega^n(\Sigma)$ denotes the volume form of $g$. The Sobolev space $W^{k,p}(\Sigma,V)$ can then be defined as the completion of the space of smooth sections $\Gamma(V)$ in the norm
$\norm{\cdot}_{W^{k,p}(\Sigma)}$. When $V=\Sigma \times \Rbb$ or the vector bundle is clear from context, we will often write $W^{k,p}(\Sigma)$ instead. 
Furthermore, when $p=2$, we will employ the standard notation $H^k(\Sigma,V)=W^{k,2}(\Sigma,V)$, and we recall that $H^k(\Sigma,V)$ is a Hilbert space with the inner-product given by
\begin{equation*}
\ip{u}{v}_{H^k(\Sigma)} = \sum_{\ell=0}^k \ip{\nabla^{\ell} u}{\nabla^{\ell} v},
\end{equation*}  
where the $L^2$ inner-product $\ip{\cdot}{\cdot}$ is defined by
\begin{equation*}
 \ip{w}{z} = \int_\Sigma \ipe{w}{z} \, \nu_g.
\end{equation*}
We also employ the notation
\begin{equation*}
C_b^0([T_0,0),H^k(V))=C^0([T_0,0),H^k(\Sigma,V))\cap L^\infty([T_0,0),H^k(\Sigma,V))
\end{equation*}
for the set of continuous and bounded maps from
$[T_0,0)$ to $H^k(\Sigma,V)$. 

\subsection{The class of symmetric hyperbolic Fuchsian systems}
\label{sec:symhypFuchs}
In this section, we consider partial differential equations of the form
\begin{equation}
B^0(t,w_1,u)\del{t}u + B^i(t,w_1,u)\nabla_{i} u  = \frac{1}{t}\Bc(t,w_1,u)u + F(t,w_2,u) \quad \text{in $[T_0,0)\times \Sigma$,} \label{symivp.1} 
\end{equation}
where $T_0<0$, the coefficients satisfy the conditions below, and $w_1$ and $w_2$ should be interpreted as prescribed time-dependent sections of vector bundles $Z_1$ and $Z_2$, respectively, over $\Sigma$ while the solution $u$ takes values in a
vector bundle $V$ over $\Sigma$. Since the assumptions below imply, in particular, 
that \eqref{symivp.1} is symmetric hyperbolic, we 
know from standard local-in-time existence and uniqueness theory for symmetric hyperbolic equations that
given initial data at time $t_*\in [T_0,0)$, there exists a unique solution of \eqref{symivp.1} that is defined for a small interval of time about $t_*$ and agrees with the initial data at $t_*$.
What is not at all clear from the local-in-time existence theory is if there are solutions that exist for all times in the interval $[T_0,0)$.

\begin{definition}[Symmetric hyperbolic Fuchsian systems]
  \label{def:symmhypFuchssystems}
  A partial differential equation of the form \eqref{symivp.1} is called a \textit{symmetric hyperbolic Fuchsian system} for the constants $T_0<0$, $\Rsc>0$, $R>0$, $\gamma_1>0$, $\gamma_2>0$, $q\geq 0$, $0<p\leq 1$, $\mu\in\R$, $\lambda>-1$, $\theta\geq 0$ and $\beta \geq 0$, open and bounded subsets $\Zc_1\subset Z_1$ and $\Zc_2\subset Z_2$ with $\pi(Z_1)=\pi(Z_2)=\Sigma$, and maps $B^0$, $\Bt^0$, $\Bc$, 
  $\Ft$, $F_0$, $B_0$, $B_1$ and $\Bt_1$
 provided the following conditions are satisfied:
  \begin{enumerate}
\item The maps $(t,z_1,v)\mapsto B^0(t,z_1,|t|^\mu v)$ and $(t,z_1,v)\mapsto \Bc(t,z_1,|t|^\mu v)$ are in
\begin{equation*}
   C^0_b\bigl([T_0,0), C^\infty\bigl(\Zc_1\oplus B_R(V),L(V)\bigr)\bigr)\cap C^1\bigl([T_0,0), C^\infty\bigl(\Zc_1\oplus B_R(V),L(V)\bigr)\bigr)
\end{equation*} and
$C^0_b\bigl([T_0,0), C^\infty\bigl(\Zc_1\oplus B_R(V),L(V)\bigr)\bigr)$, respectively, and
\begin{gather} 
\pi(B^0(t,z_1,|t|^\mu v))=\pi(\Bc(t,z_1,|t|^\mu v))= \pi((z_1,v)),\notag\\
(B^0(t,z_1,|t|^\mu v))^{\tr}= B^0(t,z_1,|t|^\mu v) \label{B0sym}
\intertext{and}
\frac{1}{\gamma_1} \text{id}_{V_{\pi(v)}} \leq  B^0(t,z_1,|t|^\mu v) \le\gamma_2 \text{id}_{V_{\pi(v)}}\label{B0pos}
\end{gather}
for all $(t,z_1,v)\in  [T_0,0)\times\Zc_1\oplus B_R(V)$.
Moreover, there exists a map
$\Bt^0
\in C^0_b\bigl([T_0,0), C^\infty(\Zc_1,L(V))\bigr)$ satisfying
\begin{equation*}
    \pi(\Bt^0(t,z_1))=\pi(z_1)
\end{equation*}
for all $(t,z_1)\in  [T_0,0)\times \Zc_1$ and
\begin{align*}
B^0(t,z_1,|t|^\mu v)-\Bt^0(t,z_1) &= \Ord(v). 
\end{align*}
\item The map $F$ can be expanded as
\begin{equation} \label{fexp}
F(t,z_2,\cdot) = |t|^{\lambda}\Ft(t,z_2) + |t|^{\lambda} F_0(t,z_2,\cdot)
\end{equation}
where $\Ft \in C^0\bigl([T_0,0),C^\infty(\Zc_2,V)\bigr)$,
the map $(t,z_2,v)\mapsto F_0(t,z_2,|t|^\mu v)$ is
in $C^0_b\bigl([T_0,0), C^\infty(\Zc_2\oplus B_R(V),V)\bigr)$, and
\begin{equation*}
    \pi(\Ft(t,z_2))=\pi(z_2) \AND
\pi (F_0 (t,z_2,|t|^\mu v))=\pi((z_2,v))
\end{equation*}
for all $(t,z_2,v)\in  [T_0,0)\times \Zc_2\oplus B_R(V)$ and
\begin{equation*}
F_0(t,z_2,|t|^\mu v) =\Ord(v).
\end{equation*}
\item The map\footnote{Here, we are using the notation $\sigma(A)$ to denote the natural action of a differential 1-form $\sigma\in \mathfrak{X}^*(\Sigma)$ on an element of
$A\in \Gamma(L(V)\otimes T\Sigma)$, which if we express $A$ and $\sigma$ locally as
\begin{equation*}
A=A^{iI}_J \theta^J\otimes e_I \otimes \del{i} \AND \sigma = \sigma_i dx^i,
\end{equation*} 
respectively, is given by
\begin{equation*}
\sigma(A) = \sigma_i A^{iI}_J  \theta^J\otimes e_I.
\end{equation*}
} $B$ locally defined as $B^{iI}_J \theta^J\otimes e_I \otimes \del{i}$
can be expanded as 
\begin{equation} \label{Bexp}
B(t,z_1, \cdot) = |t|^{-(1-p)}B_0(t,z_1, \cdot) + |t|^{-1}B_1(t,z_1, \cdot)
\end{equation}
where the maps 
$(t,z_1,v)\mapsto B_0(t,z_1, |t|^\mu v)$ and $(t,z_1,v)\mapsto B_1(t,z_1, |t|^\mu v)$ are
in $C^0_b\bigl([T_0,0), C^\infty\bigl(\Zc_1\oplus B_R(V),L(V)\otimes T\Sigma\bigr)\bigr)$ and satisfy
\begin{align*}
\bigl[\sigma(\pi(v))(B(t,z_1,|t|^\mu v))\bigr]^{\tr}=\sigma(\pi(v))(B(t,z_1, |t|^\mu v))
\intertext{and}
\pi(B_0(t,z_1,v))=\pi(B_1(t,z_1,v))=\pi((z_1,v))
\end{align*}
for all $(t,z_1,v)\in [T_0,0)\times \Zc_1\oplus B_R(V)$ and for all $\sigma \in \mathfrak{X}^*(\Sigma)$. Moreover, there
exists a map $\Bt_1\in  C^0_b\bigl([T_0,0), C^\infty(\Zc_1,L(V)\otimes T\Sigma)\bigr)$ such that
\begin{equation*}
  \pi(\Bt_1(t,z_1))=\pi(z_1)
\end{equation*}
for all $(t,z_1)\in [T_0,0)\times \Zc_1$
and
\begin{equation*}
B_1(t,z_1, |t|^\mu v)-\Bt_1(t,z_1) = \Ord(v).
\end{equation*}
\item  The map $\Div\!  B(t,z_1,\dot{z}_1,z'_1,v,v')$
defined locally by\footnote{The notation $D_{\#}$ denotes the linear operator associated to differentiating with respect to the variable slot occupied by $\#$; e.g. $D_v f(v,w)v' = \frac{d\;}{dt}\bigl|_{t=0}f(v+t v',w)$.}
\begin{align} 
\Div\!  B(t,x,z_1,\dot{z}_1,z_1',z_2,v,v') =& \del{t}B^0(t,x,z_1,v) +D_{z_1} B^0(t,x,z_1,v)\cdot p|t|^{q-1}\dot{z}_1
 \notag \\
 &+ D_v B^0(t,x,z_1,v)\cdot (B^0(t,x,z_1,v))^{-1}\Bigl[-B^i(t,x,z_1,v)\cdot v'_i \notag \\
&+
 \frac{1}{t}\Bc(t,x,z_1,v) v + F(t,x,z_2,v)
\Bigr]+\del{i}B^i(t,x,z_1,v) \notag \\
& + D_{z_1} B^i(t,x,z_1,v)\cdot (z'_{1i}-\omega_i(x) z_1)+ D_v B^i(t,x,z_1,v)\cdot (v'_i-\omega_i(x) v) \notag \\
&+\Gamma^i_{ij}(x)B^j(t,x,z_1,v)+\omega_i(x)B^i(t,x,z_1,v)-B^i(t,x,z_1,v)\omega_i(x), \label{divBdef}
\end{align}
where $v=(v^J)$, $v'=(v'{}^J_i)$, 
$z_1=(z^J_1)$, $\dot{z}_1=(\dot{z}{}^J_1)$,
$z'_1=(z'{}^J_{1i})$, $z_2=(z^J_2)$, $\omega_i=(\omega_{iI}^J)$,
$B^i=(B^{iJ}_I)$, $x=\pi(v)=\pi(z_1) =\pi(z_2)$, and
\begin{equation*}
(t,z_1,\dot{z}_1,z'_1,z_2,v,v')\in [T_0,0) \times \Zc_1\oplus B_\Rsc(Z_1)\oplus 
B_\Rsc(Z_1\otimes T^*\Sigma)\oplus \Zc_2
\oplus B_R(V)\oplus B_R(V\otimes T^*\Sigma),
\end{equation*}
satisfies
\begin{align}
 \Div \! B(t,z_1,\dot{z}_1,z'_1,z_2,|t|^\mu v, |t|^\mu v'_1) &= \Ordc\bigl( |t|^{-(1-p)}\theta+ |t|^{-1}\beta\bigl)\label{divBbnd.1}
\end{align}
for some constants $q,\theta,\beta\ge 0$.
We note, by definition, that $\Div\! B$ satisfies 
\begin{equation}\label{divBonshell}
  \Div\!  B(t,w_1,p^{-1}|t|^{1-q}\del{t}w_1,\nabla w_1,w_2,u,\nabla u)
  = \del{t}\bigl(B^0(t,w_1,u)\bigr)+\nabla_i(B^i(t,w_1,u)) 
\end{equation}
for any solution $u$ of \eqref{symivp.1}.
\end{enumerate}
\end{definition}

As we shall see in our application to the Euler equations in Section \ref{sec:applications},  the similarity between the
structural assumptions given above by \Defref{def:symmhypFuchssystems} for the forward problem, i.e., going away from the singularity at $t=0$, and those given in 
Appendix \ref{coeffassumps}
for the backward problem, i.e., going towards the singularity at $t=0$, is a decisive advantage as it allows us to analyse both the forward and backward evolution problems under a similar set of conditions. We expect that this feature will be important in future application.


The main genuine difference between the assumptions from \Defref{def:symmhypFuchssystems} and those from Appendix \ref{coeffassumps} is the \emph{upper} bound in \eqref{eq:evassumptionforwardN} versus the \emph{lower} bound in \eqref{B0BCbndtrafoBW} for the linear operator $\Bc$, which is one of the key structural differences between forward and backward problems as has been noted previously in \cite{ringstrom2017,ames2019}. We further note that the parameter $\lambda$ from \Defref{def:symmhypFuchssystems}, which is not present in Appendix \ref{coeffassumps}, introduces control of the decay of the source term function in a manner that is crucial for the forward problem as obvious from \eqref{eq:minimaldecayassumption} below.
Making appropriate choices for the parameter $\mu$ in \Defref{def:symmhypFuchssystems}, which also appears in \eqref{eq:minimaldecayassumption},  is as critical.
It is worth noting that some of the conditions in \Defref{def:symmhypFuchssystems} become less restrictive  the larger we choose $\mu$ due to the factor $|t|^\mu$ that appears in the arguments of some of the maps there.
However, $\mu$ must be no larger than $\lambda+1$ as a particular consequence of \eqref{eq:minimaldecayassumption} given that $\alpha$ and $p$ are positive.

We close this subsection with the remark that it might be of general interest to allow $\lambda$, $p$ and $\mu$ to be sufficiently smooth \emph{functions} on $\Sigma$. Depending on the application, this might yield a more localised and finer control. In this paper, however, we fully restrict to the constant case for simplicity.

\subsection{The singular initial value problem}
\label{sec:SingSymHypSyst2}
We are now ready to state our new existence and uniqueness theorem for the forward problem, that is, the singular initial value problem for Fuchsian systems. The constant $C_{\text{Sob}}$ that appears in the following theorem is the constant from Sobolev's inequality \cite[Ch.~13, Prop.~2.4.]{taylor2011} for $k\in\Zbb_{> n/2+1}$, that is,
\begin{equation} \label{Sobolev}
\max\bigl\{\norm{f}_{L^\infty(\Sigma)},\norm{\nabla f}_{L^\infty(\Sigma)}\bigr\}\leq C_{\text{Sob}} \norm{f}_{H^{k}(\Sigma)}.
\end{equation}

\begin{thm}[The singular initial value problem for symmetric hyperbolic Fuchsian systems]
\label{thm:mainresult}
Suppose $k \in \Zbb_{>n/2+3}$, 
\Eqref{symivp.1} is  a symmetric hyperbolic Fuchsian system 
for constants $T_0<0$, $\Rsc>0$, $R>0$, $\gamma_1>0$, $\gamma_2>0$, $q\geq 0$, $0<p\leq 1$, $\mu\in \Rbb$, $\lambda>-1$, $\theta\geq 0$ and $\beta\geq 0$,
open bounded sets $\Zc_1\subset Z_1$ and  $\Zc_2\subset Z_2$ with $\pi(\Zc_1)=\pi(
\Zc_2)=\Sigma$, and maps $B^0$, $\Bt^0$, $\Bc$, 
$\Ft$, $F_0$, $B_0$, $B_1$, and $\Bt_1$ as in \Defref{def:symmhypFuchssystems} with 
\begin{equation}
  \label{eq:minimaldecayassumption}
  \lambda+1-(1+\alpha)p = \mu
\end{equation}
for some $\alpha> 0$, $w_1\in C^0_b\bigl([T_0,0),H^k(\Sigma,Z_1)\bigr)\cap C^1\bigl([T_0,0),H^{k-1}(\Sigma,Z_1)\bigr)$ and $w_2\in C^0_b\bigl([T_0,0),H^k(\Sigma,Z_2)\bigr)$ satisfy
\begin{equation}
   \sup_{T_0<t<0} \max\Bigl\{ \norm{\nabla w_1(t)}_{H^{k-1}(\Sigma)},
   \norm{|t|^{1-q}\del{t}w_1(t)}_{H^{k-1}(\Sigma)}\Bigr\} < \frac{\Rsc}{C_{\text{Sob}}},
\end{equation}
and
\begin{equation*}
w_a([T_0,0)\times\Sigma)\subset \Zc_a, \quad a=1,2.
\end{equation*}
Let
\begin{equation}
\label{eq:defb}
\mathtt{b} =  \sup_{T_0 \leq t < 0}\Bigl( \big\|\big|\nabla(\tilde{B}^0(t))\tilde{B}^0(t)^{-1} \tilde{B}_1(t)\bigl|_{\op}\bigr\|_{L^\infty}
+ \big\|\big|\nabla \tilde{B}_1(t)\bigl|_{\op}\bigr\|_{L^\infty} \Bigr)
\end{equation}
where $\tilde{B}^0(t)=\tilde{B}^0(t,w_1(t))$ and
$\tilde{B}_1(t)=\tilde{B}_1(t,w_1(t))$, and
in addition, suppose there exists a $\eta>0$ satisfying
\begin{equation}
  \label{eq:forwardposN}
  2\eta -\gamma_1\beta-k(k+1) \mathtt{b}\gamma_1>0
\end{equation}
for which the inequality
\begin{equation} 
  \label{eq:evassumptionforwardN}  
\Bc(t,z_1,|t|^\mu v)\le \bigl(\mu-\eta \bigr) B^0(t,z_1, |t|^\mu v)
\end{equation}
holds for  all $(t,z_1,v)\in [T_0,0)\times \Zc_1 \oplus  B_{R}(V)$. 
Then there exist
constants $C,\delta> 0$ such that if
\begin{equation}
  \label{eq:forwardsmallnessTh}
\int_{T_0}^0 |s|^{p-1}\norm{\tilde F(s,w_2(s))}_{H^k(\Sigma)}\, ds < \delta,
\end{equation}
then there exists a solution
\begin{equation}
\label{eq:symhypSIVPregNNNNNNNN}
u \in C^0_b\bigl([T_0,0),H^{k}(\Sigma,V)\bigr)\cap C^1\bigl([T_0,0),H^{k-1}(\Sigma,V)\bigr)\subset C^1([T_0,0)\times\Sigma,V)
\end{equation}
of \eqref{symivp.1} that is bounded by
\begin{equation}
\label{eq:solution1estimateboundednessNNNN}
\norm{|t|^{-(\lambda+1-p)}u(t)}_{H^{k}(\Sigma)}\leq C \int_t^0 |s|^{p-1}\norm{\Ft(s,w_2(s))}_{H^k(\Sigma)}\, ds, \quad T_0\leq t<0,
\end{equation}
and
\begin{equation} \label{L^infty-property}
\sup_{t\in [T_0,0)}\max\Bigl\{\bigl\| |t|^{-\mu}u(t)\bigr\|_{L^\infty(\Sigma)},\bigl\| |t|^{-\mu}\nabla u(t)\bigr\|_{L^\infty(\Sigma)}\Bigr\} < R.
\end{equation}
Moreover, the solution $u$ is the unique solution of \eqref{symivp.1} 
within the class $C^1([T_0,0)\times \Sigma,V)$ satisfying
\eqref{L^infty-property}.
\end{thm}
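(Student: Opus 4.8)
The plan is to construct the solution $u$ by an energy-estimate-based iteration scheme and to derive uniqueness from a Gr\"onwall-type argument exploiting the upper bound \eqref{eq:evassumptionforwardN}. First I would set up the iteration: define $u^{(0)}=0$ and, given $u^{(m)}$, let $u^{(m+1)}$ solve the linear symmetric hyperbolic equation obtained from \eqref{symivp.1} by freezing the coefficients at $u^{(m)}$, imposing the trivial data $u^{(m+1)}=0$ at a sequence of times $t_j\nearrow 0$; more precisely, one first works on $[T_0,t_j]$ with zero data at $t_j$, obtains estimates uniform in $j$, and then extracts a limit as $t_j\to 0$. The natural weighted variable is $\tilde u = |t|^{-(\lambda+1-p)}u$ (equivalently one works with $|t|^{-\mu}u$ for the $L^\infty$ bound, using \eqref{eq:minimaldecayassumption}), so that the source term $|t|^\lambda\tilde F$ becomes $|t|^{p-1}\tilde F$ after weighting, which is integrable in $t$ near $0$ by hypothesis \eqref{eq:forwardsmallnessTh}; this is why the estimate \eqref{eq:solution1estimateboundednessNNNN} has the stated form.

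The core of the argument is the energy estimate for the linearized-and-weighted equation. Working with the symmetrizer $B^0$, I would define the $H^k$-type energy $E_k(t) = \sum_{\ell\le k}\langle \nabla^\ell \tilde u, B^0 \nabla^\ell \tilde u\rangle$ and differentiate in $t$. The $\del{t}B^0$ and $\nabla_i B^i$ terms are controlled using the $\Div\! B$ bound \eqref{divBbnd.1}, which feeds the $\beta$ and $\theta$ contributions; the commutator terms from moving $\nabla^\ell$ past $B^i\nabla_i$ and past $\frac1t\Bc$ produce the $\mathtt b$ and $k(k+1)$ factors visible in \eqref{eq:forwardposN}. The key point is that the singular term $\frac1t\Bc u$, after the weighting by $|t|^{-(\lambda+1-p)}$, contributes to $\frac{d}{dt}E_k$ a term whose leading behaviour is governed by $\frac1t\bigl(\Bc - (\lambda+1-p)B^0\bigr)$; combined with \eqref{eq:evassumptionforwardN} and $\mu=\lambda+1-(1+\alpha)p$ this is bounded above by $-\frac{1}{|t|}(\eta - (\text{small}))E_k$ up to the $\beta$, $\mathtt b$ error terms, and \eqref{eq:forwardposN} guarantees the net coefficient is a strictly negative multiple of $1/|t|$ plus the integrable inhomogeneity $|t|^{p-1}\|\tilde F\|_{H^k}$. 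Integrating this differential inequality from $t$ up to $0$ (noting $\int^0 |s|^{-1}ds$ is handled by the sign of the coefficient, not by integrability) yields the uniform bound \eqref{eq:solution1estimateboundednessNNNN}, with smallness \eqref{eq:forwardsmallnessTh} ensuring $\tilde u$ stays in the ball $B_R$ where the coefficient assumptions hold, which closes the iteration; standard weak-* compactness and the contraction of differences in a lower norm give convergence to a solution with the stated regularity \eqref{eq:symhypSIVPregNNNNNNNN}, and \eqref{L^infty-property} follows from \eqref{Sobolev} applied to $|t|^{-\mu}u$.

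For uniqueness, suppose $u_1,u_2\in C^1([T_0,0)\times\Sigma,V)$ both solve \eqref{symivp.1} and satisfy \eqref{L^infty-property}. Their difference $\delta u=u_1-u_2$ satisfies a linear symmetric hyperbolic equation with a $\frac1t$ singular zeroth-order term whose symmetric part is again controlled by $(\mu-\eta)B^0$ up to $\Ord(\delta u)$ corrections coming from the quasilinear dependence of the coefficients; one runs the same weighted energy estimate at a slightly lower Sobolev level (say $H^{k-1}$, which is where the $C^1$ hypothesis and Sobolev embedding make all coefficient differences Lipschitz), obtaining $\frac{d}{dt}\tilde E \le -\frac{c}{|t|}\tilde E + C\tilde E$ for the weighted energy $\tilde E$ of $|t|^{-(\lambda+1-p)}\delta u$, with $c>0$. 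Since $\tilde E$ is bounded near $t=0$ (again by \eqref{L^infty-property} and the a priori $C^1$ control) while $|t|^{-(\lambda+1-p)}$ blows up there unless $\delta u$ vanishes sufficiently fast, integrating the inequality forces $\tilde E\equiv 0$, hence $u_1=u_2$.

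The main obstacle I anticipate is not the energy estimate itself but making the approximation/limit procedure rigorous: the linearized problems have zero data imposed at $t_j\nearrow 0$, and one must show the resulting $u^{(m)}_j$ satisfy bounds uniform in both $m$ and $j$ in the weighted norm $|t|^{-(\lambda+1-p)}$ near the singular endpoint, then pass to the limit $j\to\infty$ before (or simultaneously with) $m\to\infty$ while preserving membership in $B_R$ and the $C^1$ regularity up to $t=0$. The delicate point is that the weight degenerates precisely where the data is being imposed, so one needs the sign condition \eqref{eq:forwardposN} to do double duty: to produce the a priori bound and to guarantee that the contribution of the far-away data decays as $t_j\to 0$; tracking the precise constants $C,\delta$ through this limit, and verifying the contraction in the lower norm is not destroyed by the singular coefficient, is the technically heaviest part.
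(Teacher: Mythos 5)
Your proposal diverges from the paper's proof in two structurally significant ways, and one of them contains a genuine gap.

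On the existence side, you set up a Picard iteration on frozen-coefficient linearized problems, each solved with vanishing data at $t_j\nearrow 0$, and you yourself flag the nested double limit in $m$ and $j$ as the main technical obstacle. The paper avoids this entirely: it first transforms \eqref{symivp.1} to \emph{canonical form} $(p=1,\,\mu=0)$ via the explicit time reparametrization of Lemma~\ref{lem:trafoproof}, which is where the numerology of \eqref{eq:minimaldecayassumption} is absorbed, and then solves the \emph{full quasilinear} Cauchy problem on $[T_0,t_n]$ with zero data at $t_n$ (Proposition~\ref{prop:forward}), obtaining $\Delta$, $C$, $\delta$ independent of $t_n$ by the continuation principle, before taking a single limit $t_n\to 0$ using weak-$*$ compactness in the weighted spaces $X^{s_1,s_2,p}_\rho$ together with a Rellich--Kondrachov-type compact embedding (Proposition~\ref{prop:canonicalSIVP}). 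Your iteration is heavier precisely where you worry it will be: you must produce bounds uniform in both indices while the data is imposed at the degenerating endpoint, and you must preserve contraction in a lower norm against a singular $1/t$ coefficient. The paper's single-limit approach buys you out of both difficulties; if you do pursue the iteration route you would likely end up re-proving most of Proposition~\ref{prop:forward} in disguise.

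The uniqueness argument has a concrete gap. You take $\tilde E$ to be the energy of $|t|^{-(\lambda+1-p)}\delta u$, claim it is \emph{bounded near $t=0$} via \eqref{L^infty-property}, and write the differential inequality $\frac{d}{dt}\tilde E\le -\frac{c}{|t|}\tilde E+C\tilde E$. Both steps fail. First, \eqref{L^infty-property} gives boundedness of $|t|^{-\mu}\delta u$, and since $\mu=\lambda+1-(1+\alpha)p<\lambda+1-p$ for $\alpha>0$, $p>0$, the weight $|t|^{-(\lambda+1-p)}=|t|^{-\mu}|t|^{-\alpha p}$ is strictly stronger; boundedness of $\tilde E$ does \emph{not} follow from the hypotheses and is in fact part of what must be shown. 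Second, the sign is backwards: the upper bound $\Bc\le(\mu-\eta)B^0$ in \eqref{eq:evassumptionforwardN} turns, after weighting, into a \emph{lower} bound $\del{t}\nnorm{w}^2\ge \frac{2\eta-\gamma_1\beta}{|t|}\nnorm{w}^2-\dots$ for the (unweighted) difference $w=\delta u$, exactly as in \eqref{wenergyB}. The paper's argument then integrates this to get $\norm{w(t)}_{L^2}\lesssim(|t_*|/|t|)^{2\eta-\gamma_1\beta}\norm{w(t_*)}_{L^2}$ and lets $t_*\nearrow 0$, exploiting three things you do not invoke: the argument is run at the $L^2$ level (not $H^{k-1}$); the \emph{unweighted} quantity $\norm{w(t_*)}_{L^2}$ stays bounded because both solutions satisfy \eqref{L^infty-property}; and, crucially, the cross terms $G$ in the difference equation are controlled by $\norm{G}_{L^2}\lesssim|t|^{\lambda-1}\norm{w}_{L^2}$ using the a priori \emph{decay} of the constructed solution (estimate \eqref{eq:solution1estimateboundednessNNNN} and $\lambda>0$ after canonical reduction), which produces an integrable perturbation of the singular coefficient. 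Your argument omits this last point entirely, and it is precisely where the quasilinearity bites.
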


\begin{rem} \label{rem:Fuchsian-SIVP}
If the map $B_1$, see \eqref{Bexp}, vanishes,
then the regularity requirement in the statement of Theorem~\ref{thm:mainresult} can be lowered to $k \in \Zbb_{>n/2+1}$. This is also true for the statements of Propositions~\ref{prop:forward} and \ref{prop:canonicalSIVP} below. The origin of this improvement is that Lemma~3.5 from \cite{beyer2019a} is no longer needed
to derive the estimate \eqref{Gell-estimate} in the proof of Proposition~\ref{prop:forward}. 
\end{rem}

The proof of Theorem \ref{thm:mainresult} is given in \Sectionref{sec:proofs} below, but before we consider the proof, we first make a few observations. First, a direct application of \Theoremref{thm:mainresult} to the model problem \eqref{eq:6model6} would only address the case with \emph{zero asymptotic data} $\ut_*=0$, cf.~\eqref{eq:6model8}. However, this is no loss of generality since \Theoremref{thm:mainresult} can instead be applied to the equation for the \emph{remainder} $w=u-u_0$ defined from the original unknown $u$ and the leading-order term  $u_0$  parametrised by the asymptotic data, e.g.\ $u_0(t)=t^b\ut_*$ in \eqref{eq:6model8}. In this way, we can obtain the solution \eqref{eq:6model8} to the model Fuchsian equation \eqref{eq:6model6} via an application of
\Theoremref{thm:mainresult}.

It is an important property of Fuchsian systems, as we demonstrate in the proof of  \Theoremref{thm:mainresult} below, that a solution to the singular initial value problem \emph{can be approximated on $[T_0,0)$ with arbitrary accuracy} by a solution of the (regular) initial value problem on intervals $[T_0,t_*]$ with suitable (regular) initial data imposed at sufficiently small (regular) times $t_*<0$.
The central idea of the proof of Theorem~\ref{thm:mainresult} is to show that the sequence of solutions $u_{\matchtime}(t)$ generated by solving the {regular initial value problem} of \eqref{symivp.1} with zero Cauchy data $u_{\matchtime}(\matchtime)=0$ imposed at a monotonic sequence of positive times $\matchtime$ with $\matchtime\nearrow 0$ converges to a solution of the singular initial value problem. This idea was put forward in \cite{beyer2010b} and then used in \cite{ames2013a,ames2013b} to establish the first  existence proof of solutions of the singular initial value problem for quasilinear symmetric hyperbolic Fuchsian equations.  Besides being a useful analytic technique, this approximation technique can be employed to construct numerical solutions of the singular initial value problem
\cite{beyer2010b,beyer2011, beyer2020b}.

\subsection{Proof of \Theoremref{thm:mainresult}}
\label{sec:proofs}
\subsubsection{Transformation to a canonical Fuchsian system}
\label{sec:usefultrafos}

The first step in the proof of \Theoremref{thm:mainresult}
is to show that the Fuchsian system  \eqref{symivp.1} 
can be transformed into \textit{canonical form}, which
we now define. The point of this transformation is 
that it allows us to deduce existence for the general class of Fuchsian systems \eqref{symivp.1} from an existence result for the simpler class of
canonical Fuchsian systems.

\begin{definition}[Canonical symmetric hyperbolic Fuchsian systems]
  \label{def:CansymmhypFuchssystems}
  A system of partial differential equations of the form \eqref{symivp.1} is called  a \emph{canonical symmetric hyperbolic Fuchsian system} 
for the constants $T_0<0$, $\Rsc>0$, $R>0$,  $\gamma_1>0$, $\gamma_2>0$, $q\geq 0$, $\lambda>0$, $\theta\geq 0$ and $\beta\geq 0$, open sets $\Zc_1\subset Z_1$ and $\Zc_2\subset Z_2$ with $\pi(\Zc_1)=\pi(\Zc_2)=\Sigma$, and maps $B^0$, $\Bt^0$, $\Bc$, 
$\Ft$, $F_0$, $F_1$, $B_0$, $B_1$ and $\Bt_1$ if these choices of constants, sets and maps along with $p=1$ and
$\mu=0$ make the system symmetric hyperbolic Fuchsian
according to \Defref{def:symmhypFuchssystems}.
\end{definition}

The following lemma guarantees that the Fuchsian system  \eqref{symivp.1} can
always be brought into canonical form provided the condition \eqref{eq:transformcond} below holds. Since the proof follows from a straightforward calculation, the details will be left to the reader.

\begin{lemma}
  \label{lem:trafoproof}
  Suppose $\pbold\in\R\backslash\{0\}$, $\lambdabold\in\R$ and $T_0<T_1\le 0$. Then $u\in C^1([T_0,T_1)\times \Sigma,V)$ is a solution of \eqref{symivp.1} if and only if
  \begin{equation}
    \label{eq:trafodef}
    \bar u(\tau,x)=(-\tau)^{-\lambdabold/\pbold} u(-(-\tau)^{1/\pbold},x),\quad \tau=-(-t)^\pbold,
  \end{equation}
  is a solution of 
\begin{equation}
  \label{eq:trafoFirst}
\Bb^0(\tau,\wb_1,\ub)\del{\tau} \ub + \Bb^i(\tau,\wb_1,\ub)\nabla_i \ub = \frac{1}{\tau} \bar{\Bc}(\tau,\wb_1,\ub) \ub + \Fb(\tau,\wb_2,\ub),
\end{equation}
where
\begin{align}
\wb_1(\tau,x) &= w_1(-(-\tau)^{1/\pbold},x), \\
\wb_2(\tau,x) &= w_2(-(-\tau)^{1/\pbold},x), \\
\Bb^0(\tau,z_1,v) &= B^0\bigl(-(-\tau)^{1/\pbold},z_1, (-\tau)^{\lambdabold/\pbold}v\bigr),\label{Bb0def}\\
\Bb^i(\tau,z_1,v) &= \frac{(-\tau)^{\frac{1-\pbold}{\pbold}}}{\pbold}B^i\bigl(-(-\tau)^{1/\pbold},z_1, (-\tau)^{\lambdabold/\pbold}v\bigr), \label{Bbidef}\\ 
\bar{\Bc}(\tau,z_1,v) &= \frac{\Bc\bigl(-(-\tau)^{1/\pbold},z_1, (-\tau)^{\lambdabold/\pbold}v\bigr)
-\lambdabold B^0\bigl(-(-\tau)^{1/\pbold},z_1, (-\tau)^{\lambdabold/\pbold}v\bigr)}{\pbold},
\label{Bcbdef}\\
\Fb(\tau,z_2,v) &= \frac{(-\tau)^{\frac{1-\pbold-\lambdabold}{\pbold}}}{\pbold} F\bigl(-(-\tau)^{1/\pbold},z_2, (-\tau)^{\lambdabold/\pbold}v\bigr),
\label{Fbdef}
\end{align}
with $\ub\in C^1([-(-T_0)^\pbold,-(-T_1)^\pbold)\times \Sigma,V)$. Furthermore, the map $\Div\! B$ defined by \eqref{divBdef} transforms as
\begin{equation}
  \label{eq:divtrafo}
 \Div\! \Bb\bigl(\tau,z_1,\dot{z}_1,z'_1,z_2,v,v'\bigr)
 = \frac{(-\tau)^{\frac{1-\pbold}{\pbold}}}{\pbold}\Div\! B\bigl(-(-\tau)^{1/\pbold},z_1,\pbold\dot{z}_1\pb/p,z'_1,z_2,(-\tau)^{\lambdabold/\pbold} v, (-\tau)^{\lambdabold/\pbold} v'\bigr),
\end{equation}
provided $\qb$ and $\pb$ are the constants used to define the divergence map on the left side of \eqref{eq:divtrafo} and $q$ and $p$ are the ones used to define the divergence map on the right side with $\qb=q/\pbold$.

\medskip

\noindent In particular, if
 \eqref{symivp.1} is a symmetric hyperbolic Fuchsian system 
for constants $T_0<0$, $\Rsc>0$, $R>0$,  $\gamma_1>0$, $\gamma_2>0$, $q\geq 0$, $0<p\leq 1$, $\mu\in \Rbb$, $\lambda>-1$, $\theta\geq 0$ and $\beta\geq 0$, open bounded sets $\Zc_1\subset Z_1$ and $\Zc_2\subset Z_2$ with $\pi(\Zc_1)=\pi(\Zc_2)=\Sigma$, and maps $B^0$, $\Bt^0$, $\Bc$, 
$\Ft$, $F_0$, $B_0$, $B_1$, and $\Bt_1$ as in \Defref{def:symmhypFuchssystems} such that
\begin{equation}
  \label{eq:transformcond}
    \mu =\lambda+1-(1+\alpha)p 
\end{equation}
for some $\alpha > 0$,
then \eqref{eq:trafoFirst} -- \eqref{Fbdef} with
\begin{equation*}
  \pbold=p,\quad 
  \lambdabold=\mu
\end{equation*}
is a canonical symmetric hyperbolic Fuchsian system
for the choice of constants
$\bar{T}_0=-(-T_0)^p$, $\qb=q/p$, $\bar{\Rsc}=\Rsc$, $\Rb=R$, 
$\gammab_1=\gamma_1$, $\gammab_2=\gamma_2$, $\bar{\lambda}=\alpha$, $\thetab=\theta/p$ and $\bar\beta=\beta/p$, the sets
$\bar{\Zc}_1=\Zc_1$, $\bar{\Zc}_2=\Zc_2$, and maps 
\begin{align*}
    \tilde\Bb^0(\tau,z_1)&=\Bt^0(-|\tau|^{1/p},z_1)  
\intertext{and}
\tilde\Fb(\tau,z_2)&=\Ft\bigl(-|\tau|^{1/p},z_2\bigr)/p,\quad
\Fb_0(\tau,z_2,v)=F_0\bigl(-|\tau|^{1/p},z_2, (-\tau)^{\mu/p} v\bigr)/p.
\end{align*}
Furthermore, $\wb$ and $w$ are related via 
\begin{equation*}
    |\tau|^{1-\qb}\del{\tau}\wb(\tau,x) = \frac{1}{p}(|t|^{1-q}\del{t}w)(-(-\tau)^{1/p},x).
\end{equation*}
\end{lemma}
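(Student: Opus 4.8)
The plan is to prove both assertions of \Lemref{lem:trafoproof} by a direct substitution, with no estimates involved. The first step is to record the scalar change of variables: for fixed $\pbold\neq 0$ the map $t\mapsto\tau=-(-t)^{\pbold}$ is a smooth, strictly monotone bijection of $[T_0,T_1)$ onto the corresponding interval with inverse $t=-(-\tau)^{1/\pbold}$ and $d\tau/dt=\pbold(-t)^{\pbold-1}$, and the defining relation for $\ub$ unwinds to $u(t,x)=(-t)^{\lambdabold}\ub(-(-t)^{\pbold},x)$. Differentiating, and using that $\nabla$ is time-independent so that it commutes with the substitution, one gets $\del{i}u=(-t)^{\lambdabold}\nabla_i\ub$ and $\del{t}u=-\lambdabold(-t)^{\lambdabold-1}\ub+\pbold(-t)^{\lambdabold+\pbold-1}\del{\tau}\ub$. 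Substituting these into \eqref{symivp.1}, observing that $B^0(t,w_1,u)=\Bb^0(\tau,\wb_1,\ub)$ and similarly for the remaining coefficient maps by the very definitions \eqref{Bb0def}--\eqref{Fbdef}, then moving the term $-\lambdabold(-t)^{\lambdabold-1}B^0\ub$ to the right-hand side and dividing by $\pbold(-t)^{\lambdabold+\pbold-1}$, produces \eqref{eq:trafoFirst}. The one algebraic point to check is that the $\tfrac 1t$-term and the relocated $\lambdabold$-term combine correctly: since $(-\tau)^{1/\pbold}=-t$ one has $(-t)^{-\pbold}=-1/\tau$, which turns $\tfrac 1t\Bc(-t)^{\lambdabold}\ub+\lambdabold(-t)^{\lambdabold-1}B^0\ub$, after division, into exactly $\tfrac 1\tau\,\pbold^{-1}(\Bc-\lambdabold B^0)\ub=\tfrac 1\tau\bar{\Bc}\ub$, while the remaining prefactors reduce to powers of $(-\tau)$ by the same identity. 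The rule \eqref{eq:divtrafo} for $\Div\! B$ is obtained the same way: differentiate the local expression \eqref{divBdef} written at time $\tau$, rewrite every $\del{\tau}$, $\nabla$, $D_v$ and $D_{z_1}$ in terms of the corresponding operators at time $t$ together with the Jacobian factors above, and note that the $\bar p|\tau|^{\bar q-1}\dot z_1$ slot matches the $p|t|^{q-1}\dot z_1$ slot precisely when $\bar q=q/\pbold$; the on-shell identity \eqref{divBonshell} for $\Div\!\Bb$ then follows by combining \eqref{eq:divtrafo} with the fact that \eqref{eq:trafoFirst} is the transformed equation.

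For the ``in particular'' statement I would set $\pbold=p$, $\lambdabold=\mu$ and check the hypotheses of \Defref{def:symmhypFuchssystems} with $\bar p=1$, $\bar\mu=0$ one by one, most of them being immediate inheritance. With $\bar\mu=0$ the factor $|\tau|^{\bar\mu}$ drops out of the arguments of the transformed maps, and $\Bb^0(\tau,z_1,v)$ is just $B^0$ evaluated at $t=-|\tau|^{1/p}$ with its last argument scaled by $|t|^{\mu}$; hence symmetry \eqref{B0sym}, the two-sided bound \eqref{B0pos}, and the leading-order property $B^0-\Bt^0=\Ord(v)$ carry over verbatim with $\tilde\Bb^0(\tau,z_1)=\Bt^0(-|\tau|^{1/p},z_1)$, and the required $C^0_b\cap C^1$ regularity in $\tau$ is preserved because $\tau\mapsto-|\tau|^{1/p}$ is $C^1$ with bounded derivative on $[\bar T_0,0)$ — this is where the restriction $0<p\le 1$, i.e.\ $1/p\ge 1$, is used. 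The genuinely substantive computation is for the source term: inserting the expansion \eqref{fexp} into \eqref{Fbdef} produces the scalar prefactor $p^{-1}(-\tau)^{(1-p-\mu)/p}|t|^{\lambda}=p^{-1}|t|^{1-p-\mu+\lambda}$, and the hypothesis \eqref{eq:transformcond} is precisely the statement $1-p-\mu+\lambda=\alpha p$, so this prefactor equals $p^{-1}|\tau|^{\alpha}$; since $\alpha>0$ this is exactly what upgrades the original $\lambda>-1$ to the strictly positive decay parameter $\bar\lambda=\alpha$ demanded of a canonical system, and it yields the stated formulas for $\tilde\Fb$ and $\Fb_0$ together with $\Fb_0=\Ord(v)$. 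The same manipulation applied to \eqref{Bexp} converts $|t|^{-(1-p)}$ into $|\tau|^{0}$ and $|t|^{-1}$ into $|\tau|^{-1}$ (using $|t|^{-p}=|\tau|^{-1}$), giving $\bar p=1$ and the obvious decompositions of $\Bb^i$ with their leading-order terms, and applied to \eqref{divBbnd.1} via \eqref{eq:divtrafo} it yields $\Div\!\Bb=\Ordc(|\tau|^{0}\thetab+|\tau|^{-1}\bar\beta)$ with $\thetab=\theta/p$ and $\bar\beta=\beta/p$. Finally, the relation between $\wb$ and $w$ is the chain rule $\del{\tau}\wb=p^{-1}|\tau|^{1/p-1}\del{t}w$ multiplied by $|\tau|^{1-\bar q}$ with $\bar q=q/p$.

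There is no deep obstacle here — the lemma is genuinely a change-of-variables bookkeeping exercise — but the step most prone to error, and the one I would write out in full, is the exponent-and-sign arithmetic: one must simultaneously keep track of the three interlocking powers of $(-t)$ (the $(-t)^{\lambdabold}$ coming from $u=(-t)^{\lambdabold}\ub$, the Jacobian $(-t)^{\pbold-1}$, and the conversion between $\tfrac 1t$ and $\tfrac 1\tau$), respect the sign conventions forced by $t<0$ and $\tau<0$ (so that $-t=|t|$ and $(-\tau)^{1/\pbold}=-t$), and confirm that \eqref{eq:transformcond} is exactly the relation that makes all the resulting exponents land on the values required of a canonical symmetric hyperbolic Fuchsian system by \Defref{def:symmhypFuchssystems}.
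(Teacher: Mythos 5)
Your proposal is correct and carries out exactly the ``straightforward calculation'' the paper declines to spell out (it explicitly leaves the details of \Lemref{lem:trafoproof} to the reader). The change-of-variables bookkeeping is done carefully: the sign conventions with $t,\tau<0$, the exponent arithmetic that turns $\tfrac{1}{t}\Bc u$ and the relocated $-\lambdabold(-t)^{\lambdabold-1}B^0\ub$ into $\tfrac{1}{\tau}\bar\Bc\ub$, the identification of \eqref{eq:transformcond} with the relation $1-p-\mu+\lambda=\alpha p$ that produces $\bar\lambda=\alpha>0$, and the conversion $|t|^{-p}=|\tau|^{-1}$ giving $\bar p=1$, $\thetab=\theta/p$, $\bar\beta=\beta/p$ all check out, as does the $\qb=q/p$ matching in the $\dot z_1$ slot of $\Div\!B$.
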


\subsubsection{Existence and uniform estimates on time intervals 
$[T_0,\matchtime]$ with $\matchtime<0$}
The next step in the proof of Theorem \ref{thm:mainresult} is to establish the existence of solutions to canonical Fuchsian systems on time intervals of the form $[T_0,\matchtime]$
where $|\matchtime|$ can be chosen as small as we like. 
This existence result is established in Proposition
\ref{prop:forward} below. Crucially, the initial data smallness parameter $\delta$ in Proposition \ref{prop:forward} 
is independent of $\matchtime$. This property will
allow us to send $\matchtime$ to zero  in Proposition \ref{prop:canonicalSIVP} below to obtain, in the limit, solutions of the singular initial value problem for canonical Fuchsian systems. We note that  for this construction it essential that the
other constants, e.g. $\Delta,C$, appearing in \Propref{prop:forward} are also independent of $t_*$. 

\begin{proposition} \label{prop:forward}
Suppose $k \in \Zbb_{>n/2+3}$, \Eqref{symivp.1} is a
canonical symmetric hyperbolic Fuchsian system 
for the constants $T_0<0$, $\Rsc$, $R>0$,  $\gamma_1>0$, $\gamma_2>0$, $q\geq 0$, $\lambda>0$, $\theta\geq 0$ and $\beta\geq 0$, open bounded sets $\Zc_1\subset Z_1$ and $\Zc_2\subset Z_2$ with $\pi(\Zc_1)=\pi(\Zc_2)=\Sigma$, and maps $B^0$, $\Bt^0$, $\Bc$, 
$\Ft$, $F_0$, $F_1$, $B_0$, $B_1$ and $\Bt_1$ as in \Defref{def:CansymmhypFuchssystems}, and
$w_1\in C^0_b\bigl([T_0,0),H^k(\Sigma,Z_1)\bigr)\cap C^1\bigl([T_0,0),H^{k-1}(\Sigma,Z_1)\bigr)$ and $w_2\in C^0_b\bigl([T_0,0),H^k(\Sigma,Z_2)\bigr)$ satisfy
\begin{equation*}
   \sup_{T_0<t<0} \max\Bigl\{ \norm{\nabla w_1(t)}_{H^{k-1}(\Sigma)},
    \norm{|t|^{1-q}\del{t}w_1(t)}_{H^{k-1}(\Sigma)}\Bigr\} < \frac{\Rsc}{C_{\text{Sob}}}
\end{equation*}
and
\begin{equation*}
w_a([T_0,0)\times\Sigma)\subset \Zc_a, \quad a=1,2.
\end{equation*}
In addition, suppose
there exists a constant $\eta>0$ such that  
\begin{equation} 
  \label{eq:evassumptionforward}  
\Bc(t,z_1,v)\le -\eta B^0(t,z_1,v)
\end{equation}
for  all $(t,z_1,v)\in [T_0,0)\times \Zc_1\oplus B_{R}(V)$ and that
\begin{equation}
  \label{eq:forwardposA}
  2\eta-\gamma_1\beta-k(k+1) \mathtt{b}\gamma_1>0,
\end{equation}
where $\mathtt{b}$ is defined in \eqref{eq:defb}.
Then there exist constants $0<\Delta< R/C_{\text{Sob}}$ and  $C>0$, independent of $T_1 \in [T_0,0)$
and $t_*\in (T_1,0)$,
such that every solution
\begin{equation*}
u \in C^0\bigl((T_1,\matchtime],H^k(\Sigma,V)\bigr)\cap C^1\bigl((T_1,\matchtime],H^{k-1}(\Sigma,V)\bigr)\subset C^1\bigl((T_1,\matchtime]\times \Sigma,V\bigr) 
\end{equation*}
of \eqref{symivp.1} that is bounded by
\begin{equation}
  \label{eq:defDeltaT1}
  \sup_{t\in (T_1,\matchtime]}\norm{u(t)}_{H^k(\Sigma)}\le\Delta
\end{equation}
also satisfies
\begin{equation}
  \label{eq:forwardestimate}
\norm{|t|^{-\nu}u(t)}_{H^k(\Sigma)}
\leq C\Bigl( \norm{|\matchtime|^{-\nu}u(\matchtime)}_{H^k(\Sigma)} + \int_t^\matchtime \norm{|s|^{\lambda-\nu}\Ft(s,w_2(s))}_{H^k(\Sigma)} ds\Bigr)
\end{equation}
for all $t\in (T_1,\matchtime]$ and $\nu \in \Rbb$.
Moreover, there exists
a $\delta > 0$  such that, for any $\matchtime \in (T_0,0)$ and $u_0\in H^k(\Sigma)$ with 
\begin{equation}
  \label{eq:forwardsmallness}
 \max\left\{\norm{u_0}_{H^k(\Sigma)}, \int_{T_0}^0 \norm{\Ft(s,w_2(s))}_{H^k(\Sigma)}ds\right\}
 \leq \delta,
\end{equation}
there exists a unique solution 
\begin{equation*}
u \in C^0\bigl([T_0,\matchtime],H^k(\Sigma)\bigr)\cap  C^1\bigl([T_0,\matchtime],H^{k-1}(\Sigma)\bigr)
\end{equation*}
of \eqref{symivp.1} that satisfies the initial condition
\begin{equation}
u(t_*) =u_0 \label{symivpforward.2}
\end{equation}
and the estimates \eqref{eq:defDeltaT1} and \eqref{eq:forwardestimate}
for $T_1=T_0$.
\end{proposition}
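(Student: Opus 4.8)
The plan is to prove \Propref{prop:forward} through a standard energy-estimate argument for the symmetric hyperbolic system \eqref{symivp.1}, exploiting the dissipative structure encoded in \eqref{eq:evassumptionforward}. Since the system is symmetric hyperbolic with the positivity assumption \eqref{B0pos} on $B^0$, local-in-time existence and uniqueness in $C^0([T_0,\matchtime],H^k)\cap C^1([T_0,\matchtime],H^{k-1})$ is already available from the standard theory mentioned before \Defref{def:symmhypFuchssystems}; the entire content of the proposition is the \emph{a priori estimate} \eqref{eq:forwardestimate} together with the fact that the constants $\Delta$, $C$, and ultimately $\delta$ are uniform in $t_*$ and $T_1$.

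First I would introduce the weighted energy $\mathbb{E}_k(t) = \sum_{\ell=0}^{k} \ip{\nabla^\ell u(t)}{B^0(t,w_1(t),u(t))\nabla^\ell u(t)}$, which by \eqref{B0pos} is equivalent to $\norm{u(t)}_{H^k(\Sigma)}^2$ uniformly. Differentiating in $t$, commuting $\nabla^\ell$ through \eqref{symivp.1}, and integrating by parts on the $B^i\nabla_i u$ term, the leading symmetric-hyperbolic flux terms cancel up to lower-order commutators; the crucial term is $\frac{1}{t}\ip{\nabla^\ell u}{\Bc\,\nabla^\ell u}$, which by \eqref{eq:evassumptionforward} is bounded above by $-\frac{\eta}{t}\ip{\nabla^\ell u}{B^0\nabla^\ell u}$ (recall $t<0$, so $-1/t>0$ and this is a \emph{damping} contribution of size $\frac{\eta}{|t|}\mathbb{E}_k$ with the correct sign to control the growth). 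The term $\del{t}B^0 + \nabla_i B^i$ arising from differentiating $B^0$ and from integration by parts is precisely $\Div\! B$ evaluated on-shell via \eqref{divBonshell}, and the bound \eqref{divBbnd.1} (with $p=1$, $\mu=0$ for canonical systems, so the $|t|^{-(1-p)}\theta$ term drops and only $|t|^{-1}\beta$ survives) controls it; the commutator terms involving $\nabla$ applied to $\Bt^0$, $\Bt_1$ generate the factor $\mathtt{b}$ and the combinatorial constant $k(k+1)$. Collecting everything yields a differential inequality of the form
\begin{equation*}
\del{t}\mathbb{E}_k(t) \geq \frac{2\eta - \gamma_1\beta - k(k+1)\mathtt{b}\gamma_1}{|t|}\,\mathbb{E}_k(t) - C|t|^{\lambda}\norm{\Ft(t,w_2(t))}_{H^k}\sqrt{\mathbb{E}_k(t)},
\end{equation*}
where the coefficient of $\mathbb{E}_k/|t|$ is strictly positive by \eqref{eq:forwardposA}. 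Setting $\nu$ to any real number and applying a Grönwall-type argument to $|t|^{-2\nu}\mathbb{E}_k(t)$ (using that $\lambda > 0$ so the source integral $\int_t^\matchtime |s|^{\lambda-\nu}\norm{\Ft}ds$ converges and dominates) produces \eqref{eq:forwardestimate}; the positivity of the exponent is what makes the estimate backward-in-time stable (solving from $\matchtime$ down to $T_0$) rather than forward-in-time.

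To close the argument and obtain the second half of the proposition, I would run a continuation/bootstrap: take $\delta$ small enough that the right-hand side of \eqref{eq:forwardestimate} with $\nu=0$ is $<\Delta/2$ whenever $\norm{u_0}_{H^k}\le\delta$ and $\int_{T_0}^0\norm{\Ft}ds\le\delta$, using \eqref{eq:forwardsmallness}; then on any interval where the local solution satisfies \eqref{eq:defDeltaT1}, the estimate in fact forces $\norm{u(t)}_{H^k}<\Delta/2$, which by the standard continuation criterion lets the solution be extended, and a connectedness argument gives existence on all of $[T_0,\matchtime]$ with the bound \eqref{eq:defDeltaT1} holding — crucially with $\Delta$, $C$, $\delta$ all depending only on the structural constants and on $w_1, w_2$, never on $\matchtime$ or $T_1$. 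One must also check that $\Delta < R/C_{\text{Sob}}$ can be arranged so that Sobolev embedding \eqref{Sobolev} keeps $(u,\nabla u)$ inside $B_R(V)$ where the coefficients are defined.

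The main obstacle I anticipate is the careful bookkeeping of the commutator terms: when $\nabla^\ell$ is commuted past the quasilinear coefficients $B^0(t,w_1,u)$, $B^i(t,w_1,u)$, and $\Bc(t,w_1,u)$, one generates terms in which derivatives fall on $u$, on $w_1$, and on the coefficient functions themselves, and these must all be reabsorbed either into $C\mathbb{E}_k$ (harmless, since they come with a bounded coefficient, not a $1/|t|$) or tracked precisely enough to produce the sharp combinatorial constant $k(k+1)\mathtt{b}$ in \eqref{eq:forwardposA} — this is exactly where the invocation of \cite[Lemma~3.5]{beyer2019a} (flagged in \Remref{rem:Fuchsian-SIVP}) enters, to estimate the $1/|t|$-singular commutator $[\nabla^\ell, \frac{1}{t}\Bt_1 \cdot]$ coming from the singular spatial-derivative coefficient $B_1$; absent $B_1$ this term is gone and the regularity requirement drops to $k > n/2+1$. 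Keeping the $1/|t|$-singular terms' coefficients sharp — separating the genuinely damping $\Bc$ contribution from the potentially-growing $\Div\! B$ and $\Bt_1$-commutator contributions — is the delicate heart of the estimate; everything else is routine symmetric-hyperbolic energy methodology.
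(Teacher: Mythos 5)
Your overall architecture matches the paper's --- a weighted $H^k$ energy estimate driven by the damping in \eqref{eq:evassumptionforward}, followed by a bootstrap/continuation argument that uses the smallness of $u_0$ and $\int_{T_0}^0\norm{\Ft}_{H^k}\,ds$ to propagate the bound $\norm{u(t)}_{H^k}\le\Delta$ down to $T_0$ uniformly in $\matchtime$. Your choice $\nu=0$ in the closing step is in fact the cleaner option. However, there is a genuine gap in the way you arrive at the displayed differential inequality for $\mathbb{E}_k$. You assert that the commutator terms can ``all be reabsorbed either into $C\mathbb{E}_k$ (harmless, since they come with a bounded coefficient, not a $1/|t|$) or tracked precisely enough to produce the sharp combinatorial constant $k(k+1)\mathtt{b}$.'' This is false: the commutators $[\nabla^\ell,B^0](B^0)^{-1}\Bc u$ and $[\nabla^\ell,\Bc]u$ appearing in $G_\ell$ are multiplied by $|t|^{-1}$, and after summing over $\ell$ they produce, besides the leading $\ell\mathtt{b}\norm{u}_{H^k}^2$ contribution, a sub-leading term of the form $\frac{1}{|t|}C(\norm{u}_{H^k})\norm{u}_{H^k}\norm{u}_{H^{k-1}}$ whose coefficient does \emph{not} vanish as $u\to 0$ (it contains $\nabla w_1$-dependent pieces from $\nabla\Bt^0$). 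If you crudely bound $\norm{u}_{H^{k-1}}\le\norm{u}_{H^k}$, this yields a fixed-size negative $\frac{C}{|t|}\norm{u}_{H^k}^2$ that destroys the positivity of the damping coefficient $\rho_k=2\eta-\gamma_1\beta-k(k+1)\mathtt{b}\gamma_1$ required by \eqref{eq:forwardposA}, so your inequality is not directly attainable.

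The paper's remedy, which you omit, is a two-part technical device. First, Sobolev interpolation (Ehrling's lemma) is used to split $\norm{u}_{H^{k-1}}\le\epsilon\norm{u}_{H^k}+c(\epsilon^{-1})\norm{u}_{L^2}$; the $\epsilon$-piece is absorbed by shrinking $\epsilon$ and $\Delta$, while the $\norm{u}_{L^2}$-piece is controlled by simultaneously running the $\ell=0$ energy estimate --- which has its own clean damping $\rho_0/|t|$ and no problematic commutators --- and adding that estimate with the coefficient $\rho_0^{-1}c(\Delta,\epsilon^{-1})$. Second, the remaining $|t|^{-1}$-weighted \emph{favourable} term is not treated by a $|t|$-weighted Gr\"onwall at this stage; instead it is moved into a modified energy $E(t)=\nnorm{u}^2_k +\tfrac{c}{\rho_0}\nnorm{u}^2_0-\int_t^\matchtime\frac{\rhot_k}{s}\nnorm{u(s)}^2_k\,ds$, which then satisfies an inequality $\del{t}\sqrt{E}\ge -C\sqrt{E}-C\norm{|t|^\lambda\Ft}_{H^k}$ with \emph{no} singular coefficient, and ordinary Gr\"onwall gives \eqref{eq:forwardestimate}. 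Without these two ingredients --- the coupled $\ell=0$/$\ell=k$ interpolation scheme and the integral-augmented energy --- the derivation of \eqref{eq:forwardestimate} does not go through.
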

\begin{proof}
The first step of the proof is to derive the  estimate \eqref{eq:forwardestimate}. The derivation of this estimate is modelled on the derivation of related estimates from the existence part of the proof of Theorem~3.8 in \cite{beyer2019a}. To complete the proof, we then use the estimate \eqref{eq:forwardestimate} in
conjunction with the continuation principle for symmetric hyperbolic equations to establish the existence of solutions to \eqref{symivp.1} on intervals of the form $[T_0,t_*]$ where the initial data is specified at a fixed time $t_*$ in the interval $(T_0,0)$. Crucially, we show for sufficiently small initial data that the time $t_*$ can be chosen as close to zero as we like.

To begin the first step, we suppose $\matchtime\in (T_1,0)$
and $u\in C^0((T_1,\matchtime],H^k(\Sigma,V))\cap C^1((T_1,\matchtime],H^{k-1}(\Sigma,V))$ is a solution of \eqref{symivp.1} that satisfies the initial condition \eqref{symivpforward.2} and the bound \eqref{eq:defDeltaT1} for some  $\Delta< R/C_{\text{Sob}}$,
which we note by the Sobolev inequality \eqref{Sobolev}
implies that
\begin{equation*}
\sup_{t\in (T_1,\matchtime]}\max\Bigl\{\norm{\nabla u(t)}_{L^\infty},
\norm{u(t)}_{L^\infty}\Bigr\}<R.
\end{equation*}
Then by a slight variation of the arguments in \cite{beyer2019a}, we see by taking $\ell$ spatial derivatives of \eqref{symivp.1} for  $0\leq \ell \leq k$ that
$\nabla^\ell u$ satisfies
\begin{align*}
B^0\del{t} \nabla^\ell u + B^i\nabla_i \nabla^\ell  u = & \frac{1}{t}\Bigl[\Bc \nabla^\ell  u  - [\nabla^\ell,B^0](B^0)^{-1}\Bc  u +[\nabla^\ell,\Bc]u\Bigr] 
+ [\nabla^\ell ,B^0](B^0)^{-1}B^i\nabla_i u  \notag \\
&-[\nabla^\ell ,B^i]\nabla_i u 
-B^i[\nabla^\ell,\nabla_i]u 
 - [\nabla^\ell ,B^0](B^0)^{-1}F 
 +  \nabla^\ell F.
\end{align*}
For this proof, we consider $w_1$ and $w_2$ as fixed and for brevity, we omit these from the argument list for all maps. As a consequence of the Moser and Sobolev inequalities, all constants in the following depend on $w_1$ and $w_2$ only via their uniform $H^k(\Sigma)$ bounds over $[T_0,0)$.

From the standard $L^2$-energy identity, we have
\begin{equation} \label{energy-identity}
\frac{1}{2} \del{t} \ip{\nabla^\ell u}{B^0 \nabla^\ell u} 
= -\frac{1}{|t|}\ip{\nabla^\ell u}{\Bc  \nabla^\ell u}+\frac{1}{2} \ip{\nabla^\ell u}{\Div \! B \nabla^\ell u} + \ip{\nabla^\ell u}{G_\ell}, \qquad
0\leq \ell \leq k,
\end{equation}
where  $G_0=F$ and
\begin{align*}
G_\ell = & |t|^{-1}\left([\nabla^\ell,B^0](B^0)^{-1}\Bc  u)-[\nabla^l,\Bc]u\right)
+ [\nabla^\ell,B^0](B^0)^{-1}B^i\nabla_i u \notag \\
&-[\nabla^\ell,B^i]\nabla_i u 
-B^i[\nabla^\ell,\nabla_i]u
 - [\nabla^\ell ,B^0](B^0)^{-1}F 
 + \nabla^\ell F, \qquad  1\leq \ell\leq  k.
\end{align*}

By a slight variation of the estimates from Propositions 3.4.,~3.6.~and 3.7.~from \cite{beyer2019a} and the expansion \eqref{fexp} with $p=1$ and $\mu=0$, we then have
\begin{equation} \label{G0-estimate}
\ip{u}{G_0}\ge -C \bigl(\norm{u}^2_{L^2(\Sigma)} + \norm{|t|^\lambda \Ft}_{L^2(\Sigma)}\norm{u}_{L^2(\Sigma)}\bigr)
\end{equation}
and
\begin{align}
& \ip{\nabla^\ell u}{G_\ell}\geq  -\frac{1}{|t|}\Bigl[\ell\mathtt{b} \norm{ u}^2_{H^k(\Sigma)}+ C(\norm{u}_{H^k(\Sigma)})\bigl(\norm{ u}_{H^k(\Sigma)}\norm{ u}_{H^{k-1}(\Sigma)}+\norm{u}_{H^k(\Sigma)}\norm{ u}^2_{H^k(\Sigma)}\bigr)\Bigr]
\notag \\
&\qquad 
- C(\norm{u}_{H^k(\Sigma)})\Bigl[\norm{u}^2_{H^k(\Sigma)} + \norm{u}_{H^k(\Sigma)}\norm{|t|^{\lambda}\Ft}_{H^k(\Sigma)}\Bigr] \label{Gell-estimate}
\end{align}
for $1\leq \ell \leq k$ where 
$\mathtt{b}$ is defined in \eqref{eq:defb}. 
We see also from \eqref{divBbnd.1} with $\mu=0$ and $p=1$ that
\begin{equation}\label{DivB-estimate}
|\ip{\nabla^\ell u}{\Div\! B \nabla^\ell u}| \leq \theta \norm{\nabla^\ell u}^2_{L^2}
+ |t|^{-1}{\beta} \norm{ \nabla^\ell u}_{L^2}^2.
\end{equation}

Before proceeding, we define the energy norm
\begin{equation*}
\nnorm{u}^2_s =\sum_{\ell=0}^s \ip{\nabla^\ell u}{B^0 \nabla^\ell u},
\end{equation*}
which we note is equivalent to the standard Sobolev norm $\norm{u}_{H^s(\Sigma)}$ since
\begin{equation*}
\frac{1}{\sqrt{\gamma_1}} \norm{\cdot}_{H^s} \leq \nnorm{\cdot}_s \leq \sqrt{\gamma_{2}} \norm{\cdot}_{H^s}
\end{equation*}
holds by \eqref{B0pos} with $\mu=0$.  We will employ this equivalence below without comment.

Now, by \eqref{eq:evassumptionforward} and \eqref{energy-identity}--
\eqref{DivB-estimate}, we find that
\begin{align*} 
\frac{1}{2}& \del{t} \nnorm{\nabla^\ell u}^2_0 
  \ge\frac 1{|t|}\frac{2{\eta}-\gamma_1{\beta}_1}{2} \nnorm{\nabla^\ell u}^2_0-\frac{\gamma_1{\theta}}{2} \nnorm{\nabla^\ell u}^2_{0}
\\
 &-\frac{1}{|t|}\Bigl[\ell {\mathtt{b}} \gamma_1\nnorm{ u}^2_{k}+ C(\nnorm{u}_{k})\bigl(\nnorm{ u}_{k}\nnorm{ u}_{{k-1}}+\nnorm{u}_{k}^3\bigr)\Bigr]
- C(\nnorm{u}_{k})\Bigl[\nnorm{u}^2_{k} + \norm{|t|^\lambda\Ft}_{H^k(\Sigma)}\nnorm{u}_{k}\Bigr]
\end{align*}
for  $1\leq |\ell| \leq k$,
and
\begin{align} 
\del{t} \nnorm{u}^2_0 
  \ge&\frac 1{|t|}(2\eta-\gamma_1{\beta}_1) \nnorm{ u}^2_0 
- C(\nnorm{u}_{k})\Bigl[\nnorm{u}^2_{k} + \norm{|t|^\lambda\Ft}_{H^k(\Sigma)}\nnorm{u}_{k}\Bigr]. \label{k=0-estimate}
\end{align}
Summing these estimates over $\ell$ from $0$ to $k$ yields
\begin{align} 
\del{t} \nnorm{u}^2_k 
  \ge&\frac 1{|t|}\bigl[2\eta-\gamma_1{\beta}_1-k(k+1) \mathtt{b}\gamma_1-C(\nnorm{u}_{k}) \nnorm{u}_{k}\bigr] \nnorm{u}^2_k 
\notag\\
&
 -\frac{1}{|t|}C(\nnorm{u}_{k})\nnorm{ u}_{k}\nnorm{ u}_{{k-1}}
 - C(\nnorm{u}_{k})\Bigl[\nnorm{u}^2_{k} + \norm{|t|^\lambda\Ft}_{H^k(\Sigma)}\nnorm{u}_{k}\Bigr].  \label{k-estimate}
\end{align}
From this inequality and the bound \eqref{eq:defDeltaT1}, we see that
\begin{align} 
\del{t} \nnorm{u}^2_0 
  \ge&\frac 1{|t|}{\rho_0} \nnorm{ u}^2_0
- C(\Delta)\nnorm{u}^2_{k} - C(\Delta)\norm{|t|^\lambda\Ft}_{H^k(\Sigma)}\nnorm{u}_{k} \label{k=0-estimateA}
\end{align}
where $\rho_0$ is defined by setting $k=0$ in
\begin{equation*}
    \rho_k=2{\eta}-\gamma_1{\beta}_1-k(k+1) {\mathtt{b}}\gamma_1.    
\end{equation*}
Due to assumption \eqref{eq:forwardposA}, we
note that $\rho_0>0$ and $\rho_k>0$.

Applying the Sobolev interpolation estimates\footnote{Or alternatively Ehrling's lemma.} \cite[Thm.~5.2]{AdamsFournier:2003} 
to \eqref{k-estimate}, we find, with the help of the bound \eqref{eq:defDeltaT1}, that
\begin{align*} 
\del{t} \nnorm{u}^2_k 
  \ge&\frac{1}{|t|}\bigl[2\eta-\gamma_1{\beta}_1-k(k+1) \mathtt{b}\gamma_1-C(\Delta) (\epsilon+\nnorm{u}_{k})-\epsilon\gamma_1{\beta}_0\bigr] \nnorm{u}^2_k \notag \\
 &-\frac{1}{|t|}c(\Delta,\epsilon^{-1})\nnorm{ u}_{0}^2
 - C(\Delta)\nnorm{u}^2_{k} -C(\Delta)\norm{|t|^\lambda\Ft}_{H^k(\Sigma)}\nnorm{u}_{k}
\end{align*}
for any $\epsilon>0$.
Adding ${1}/\rho_0 c(\Delta,\epsilon^{-1})$ times  \eqref{k=0-estimate} to the above inequality yields
\begin{align} 
\del{t} \left(\nnorm{u}^2_k +\frac{1}{\rho_0} c(\Delta,\epsilon^{-1}) \nnorm{u}^2_0\right)
  \ge&\frac{1}{|t|}\Bigl[2\eta-\gamma_1{\beta}_1-k(k+1) {\mathtt{b}}\gamma_1-C(\Delta) (\epsilon+\nnorm{u}_{k})-\epsilon\gamma_1{\beta}_0\Bigr] \nnorm{u}^2_k \notag \\
 &
 - C(\Delta,\epsilon^{-1})\nnorm{u}^2_{k} -C(\Delta,\epsilon^{-1})\norm{|t|^\lambda\Ft}_{H^k(\Sigma)}\nnorm{u}_{k}, \label{propEnergyA}
\end{align}
which holds for all $t\in (T_1,\matchtime]$ and $\epsilon>0$ small enough. 
Since $\rho_k>0$ by assumption, it follows, by 
choosing $\epsilon>0$ sufficiently small and by shrinking  
$\Delta \in (0,R/C_{\text{Sob}})$ if necessary, that
\[\tilde\rho_k:=\rho_k-C(\Delta) (\epsilon+\Delta) >0.\]
Assuming $\Delta$ and $\epsilon$ are chosen so that this holds, we shall no longer write the monotonic dependencies of the constants on our particular choices of $\Delta$ and $\epsilon$ as we now consider these choices as fixed.

Now, we observe that \eqref{propEnergyA} can be written as
\begin{align*} 
\del{t} \Bigl(\underbrace{\nnorm{u}^2_k +\frac{c}{\rho_0}  \nnorm{u}^2_0
-\int_t^\matchtime\frac{\rhot_k}{s} \nnorm{u(s)}^2_kds}_{=:E(t)}\Bigr)
\ge
 - C\left(\nnorm{u}^2_{k} +\norm{|t|^\lambda\Ft}_{H^k(\Sigma)}\nnorm{u}_{k}\right),
\end{align*}
which implies the  differential energy inequality
\begin{equation*}
\del{t} E(t)\ge -C E(t)-C \norm{|t|^\lambda\Ft}_{H^k(\Sigma)}\sqrt{E(t)}.
\end{equation*}
But this implies
\begin{equation}
\label{diff-energy-inequality}
\del{t} \sqrt{E(t)}\ge -C \sqrt{E(t)}-C \norm{|t|^\lambda\Ft}_{H^k(\Sigma)},
\end{equation}
and so we conclude, by Gr\"onwall's inequality, that
\[\sqrt{E(t)}\le e^{C|T_1|}\left(\sqrt{E(\matchtime)}+C\int_t^\matchtime \norm{s^\lambda\Ft(s)}_{H^k(\Sigma)}\,ds\right),\]
where here and below
$\Ft(s)$ will be used to denote $\Ft(s,w_2(s))$.
Since this inequality holds for all  $t\in (T_1,\matchtime]$, we have established the estimate \eqref{eq:forwardestimate} for $\nu=0$. 
We further note that the differential energy inequality \eqref{diff-energy-inequality} implies
\begin{equation*}
\del{t} \Bigl(|t|^{-\nu}\sqrt{E(t)}\Bigr)\ge \frac{\nu}{|t|}|t|^{-\nu}\sqrt{E(t)}-C |t|^{-\nu} E(t)-C \norm{|t|^{\lambda-\nu}\Ft}_{H^k(\Sigma)} \ge -C |t|^{-\nu} \sqrt{E(t)}-C \norm{|t|^{\lambda-\nu}\Ft}_{H^k(\Sigma)},
\end{equation*}
which shows, via another application of  Gr\"onwall's inequality, that
\eqref{eq:forwardestimate} holds for \emph{arbitrary} $\nu \in \Rbb$.

To complete the proof, we now turn to establishing the  small data existence result for the Cauchy problem of \eqref{symivp.1} and \eqref{symivpforward.2} on $[T_0,\matchtime]$ for arbitrary $\matchtime\in (T_0,0)$. Let us start by imposing the small data condition
\begin{equation*}
    \|u_0\|_{H^k(\Sigma)}\le\delta < \frac{\Rc}{5}
\end{equation*}
where
\begin{equation}
  \label{eq:RcPrelimchoice}
  \Rc=\frac{R}{C_{\text{Sob}}},
\end{equation}
which we note by Sobolev's inequality \eqref{Sobolev} implies
that $\max\bigl\{\|u(\matchtime)\|_{L^\infty(\Sigma)},\|\nabla u(\matchtime)\|_{L^\infty(\Sigma)} \bigr\}\le R/5$.
As a consequence of the standard local-in-time existence
and uniqueness result for the Cauchy problem of quasilinear symmetric hyperbolic systems, see  e.g. \cite[Ch.~16, Prop.~1.4.]{taylor2011},
there is therefore a $T^*\in [T_0,t_*]$ such that this Cauchy problem has a unique solution
\begin{equation}
  \label{eq:bootstrap0}
  u\in C^0((T^*,\matchtime],H^k(\Sigma,V))\cap C^1((T^*,\matchtime],H^{k-1}(\Sigma,V))
\end{equation}
satisfying
\begin{equation}
  \label{eq:bootstrap2}
  \|u(t)\|_{H^k(\Sigma)} < \frac{\Rc}{2}
\end{equation}
for all $t\in(T^*,\matchtime]$.
In fact, we can assume that $|T^*|$ is the \emph{maximal time} for which \eqref{eq:bootstrap0} and \eqref{eq:bootstrap2} hold for all $t\in (T^*,\matchtime]$. 
Suppose now that $T^*$ was strictly larger than $T_0$, i.e.\ the solution $u$ can \emph{not} be extended to the \emph{whole} interval $(T_0,\matchtime]$ as a solution with the properties \eqref{eq:bootstrap0} and \eqref{eq:bootstrap2}.
As a consequence of continuity of $u$ on $(T^*,\matchtime]$ and the fact that $\|u(\matchtime)\|_{H^k(\Sigma)}\le\Rc/5$, it follows that either $\|u(t)\|_{H^k(\Sigma)}< \Rc/3$
for all $t\in (T^*,\matchtime]$, or, there exists a first $T_*\in (T^*,\matchtime)$ such that
$\|u(T_*)\|_{H^k(\Sigma)}=\Rc/3$. 
In the first case, we set $T_*=T^*$. In both cases, we have
\begin{equation} \label{eq:bootstrap1}
\|u(t)\|_{H^k(\Sigma)}\leq \Rc/3
\end{equation}
for all $t\in (T_*,\matchtime]$. 

Now, shrinking $\Rc$ from its preliminary choice \eqref{eq:RcPrelimchoice}  so that $\Rc\le\Delta$ holds now, we can therefore guarantee by \eqref{eq:bootstrap1} that \eqref{eq:defDeltaT1} holds for $T_1=T_*$, and
by \eqref{eq:forwardestimate} with $\nu=\lambda$, that
\begin{equation*}
\norm{u(t)}_{H^k(\Sigma)}\leq C\Bigl( \delta + |T_0|^\lambda \int_t^\matchtime \norm{\Ft(s))}_{H^k(\Sigma)}\, ds\Bigr)
\end{equation*}
for all $t\in (T_*,\matchtime]$. Assuming now that
\begin{equation*}
  \int_{T_0}^0 \norm{\Ft(s)}_{H^k(\Sigma)}\,ds\le \delta
\end{equation*}
and tightening the condition for $\delta$ so that
$\delta\le \frac {\Rc}{4C(1+|T_0|^\lambda)}$,
it follows that $\norm{u(t)}_{H^k(\Sigma)}\le \Rc/4$ 
for all $t\in (T_*,\matchtime]$. By the definition of $T_*$ and continuity, we conclude that $T_*=T^*$ and  that therefore $\norm{u(t)}_{H^k(\Sigma)}\le \Rc/4$ for all $t\in (T^*,\matchtime]$. But this
implies that
\begin{equation*}
\max \Bigl\{\norm{u(t)}_{L^\infty(\Sigma)},
\norm{u(t)}_{L^\infty(\Sigma)}\Bigr\}\le \frac{R}{4},
\quad T^*<t<\matchtime,
\end{equation*}
and so we conclude from the maximality of $T^*$ and the continuation principle for symmetric hyperbolic equations \cite[Ch.~16, Prop.~1.5.]{taylor2011} that we must in fact have that $T^*=T_0$ and the solution extends continuously to the closed time interval $[T_0,\matchtime]$,
which completes the proof. 
\end{proof}

\subsubsection{Singular initial value problem existence and uniqueness for canonical Fuchsian systems}
We now turn to establishing the existence and uniqueness of
solutions to the singular initial value problem for canonical Fuchsian systems. Existence for the singular initial value problem is obtained from the solutions to the Cauchy problem on the intervals $[T_0,t_*]$ from Proposition \ref{prop:forward} by letting $\matchtime\nearrow 0$. 

\begin{proposition}\label{prop:canonicalSIVP}
Suppose $k \in \Zbb_{>n/2+3}$,  \Eqref{symivp.1} is a canonical symmetric hyperbolic Fuchsian system for the constants $T_0<0$, $R>0$, $\gamma_1>0$, $\gamma_2>0$, $q\geq 0$, $\lambda>0$, $\theta\geq 0$ and $\beta\geq 0$, open bounded sets $\Zc_1\subset Z_1$ and $\Zc_2\subset Z_2$ with $\pi(\Zc_1)=\pi(\Zc_2)=\Sigma$, and maps $B^0$, $\Bt^0$, $\Bc$, $\tilde{\Bc}$, $\Ft$, $F_0$, $F_1$, $B_0$, $B_1$ and $\Bt_1$ as in \Defref{def:CansymmhypFuchssystems}, and
$w_1\in C^0_b\bigl([T_0,0),H^k(\Sigma,Z_1)\bigr)\cap C^1\bigl([T_0,0),H^{k-1}(\Sigma,Z_1)\bigr)$ and $w_2\in C^0_b\bigl([T_0,0],H^k(\Sigma,Z_2)\bigr)$ satisfy
\begin{equation*}
   \sup_{T_0<t<0} \max\Bigl\{ \norm{\nabla w_1(t)}_{H^{k-1}(\Sigma)},
    \norm{|t|^{1-q}\del{t}w_1(t)}_{H^{k-1}\infty(\Sigma)}\Bigr\} < \frac{\Rsc}{C_{\text{Sob}}}
\end{equation*}
and
\begin{equation*}
w_a([T_0,0)\times\Sigma)\subset \Zc_a, \quad a=1,2.
\end{equation*}
In addition, suppose
there exists a constant $\eta>0$ such that  
\begin{equation} 
  \label{eq:evassumptionforwardA}  
\Bc(t,z_1,v)\le -\eta B^0(t,z_1,v)
\end{equation}
for  all $(t,z_1,v)\in (T_0,0)\times \Zc_1 \times B_{R}(V)$ and that \eqref{eq:forwardposA} holds,
where $\mathtt{b}$ is defined in \eqref{eq:defb}.
Then there are
constants  $C, \delta > 0$ such that if
\begin{equation} \label{Fbnd}  
\int_{T_0}^0 \norm{\Ft(s,w_2(s))}_{H^k(\Sigma)}\, ds < \delta,
\end{equation}
there exists a classical solution 
\begin{equation}
u \in C^0_b\bigl([T_0,0),H^{k}(\Sigma,V)\bigr)\cap C^1\bigl([T_0,0),H^{k-1}(\Sigma,V)\bigr) \subset C^1\bigl([T_0,0)\times\Sigma,V\bigr)
\end{equation}
of \eqref{symivp.1}
that is bounded by
\begin{equation} \label{supbound}
\max\Bigl\{\norm{u}_{L^\infty([T_0,0)\times \Sigma)},
    \norm{\nabla u}_{L^\infty([T_0,0)\times \Sigma)}\Bigr\} < R
\end{equation}
and
\begin{equation}
\label{eq:solution1estimateboundedness}
\norm{|t|^{-\lambda}{u}(t)}_{H^{k}(\Sigma)}\leq C\int_t^0 \norm{\Ft(s,w_2(s))}_{H^k(\Sigma)}\, ds
\end{equation}
for all $t\in[T_0,0)$. Moreover, the solution $u$ is the unique
solution within the class $C^1([T_0,0)\times \Sigma,V)$ satisfying the bound \eqref{supbound}.
\end{proposition}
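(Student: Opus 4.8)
The plan is to obtain the singular-IVP solution as a limit of the Cauchy solutions on intervals $[T_0,\matchtime]$ constructed in Proposition~\ref{prop:forward}. First I would fix $\delta>0$ as in Proposition~\ref{prop:forward} (so that the small-data hypotheses there hold), impose \eqref{Fbnd}, and for a decreasing sequence $\matchtime=t_j\nearrow 0$ consider the solutions $u_j\in C^0([T_0,t_j],H^k)\cap C^1([T_0,t_j],H^{k-1})$ of \eqref{symivp.1} with \emph{zero} Cauchy data $u_j(t_j)=0$ imposed at $t_j$. Each $u_j$ exists and, crucially, satisfies the estimates \eqref{eq:defDeltaT1} and \eqref{eq:forwardestimate} with constants $\Delta,C$ \emph{independent of $t_j$}; applying \eqref{eq:forwardestimate} with $\nu=\lambda$ and $u(\matchtime)=u_j(t_j)=0$ gives the uniform bound $\norm{|t|^{-\lambda}u_j(t)}_{H^k}\le C\int_t^{t_j}\norm{\Ft(s)}_{H^k}\,ds\le C\int_t^0\norm{\Ft(s)}_{H^k}\,ds$ on $[T_0,t_j]$, which is the key a priori control we need to pass to the limit.

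Next I would extract a convergent subsequence. From \eqref{symivp.1} solved for $\del{t}u_j$ (using $B^0\ge \gamma_1^{-1}\mathrm{id}$ from \eqref{B0pos}), together with the uniform $H^k$ bound on $u_j$ and the uniform $H^{k-1}$ bound on $w_1,w_2,\nabla w_1,|t|^{1-q}\del{t}w_1$, one gets a uniform bound on $\del{t}u_j$ in $H^{k-1}$ on compact subintervals of $[T_0,0)$ (away from $t=0$ the $1/t$ factor is bounded). Hence $\{u_j\}$ is, on each $[T_0,\tau]$ with $\tau<0$, uniformly bounded in $C^0([T_0,\tau],H^k)\cap C^1([T_0,\tau],H^{k-1})$ and equicontinuous into $H^{k-1}$; by Arzel\`a--Ascoli (compactness of $H^k\hookrightarrow H^{k-1}$ on a closed manifold) and a diagonal argument over an exhaustion $\tau\nearrow 0$, a subsequence converges in $C^0_{\mathrm{loc}}([T_0,0),H^{k-1})$ to some limit $u$, with $u(t)\in H^k$ and $\norm{u(t)}_{H^k}\le\liminf\norm{u_j(t)}_{H^k}$ by weak lower semicontinuity, so $u\in L^\infty_{\mathrm{loc}}([T_0,0),H^k)$. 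Passing to the limit in the weak (or $H^{k-1}$) formulation of \eqref{symivp.1} shows $u$ solves the equation, and standard symmetric-hyperbolic regularity upgrades this to $u\in C^0([T_0,0),H^k)\cap C^1([T_0,0),H^{k-1})$. The uniform bound above survives the limit and gives \eqref{eq:solution1estimateboundedness}; Sobolev's inequality \eqref{Sobolev} together with $\lambda>0$ (so $|t|^\lambda\to 0$) then yields \eqref{supbound}, and in fact $u(t)\to 0$ in $L^\infty$ as $t\nearrow 0$, so $u$ extends continuously to $[T_0,0]$ in $C^0$.

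For uniqueness within $C^1([T_0,0)\times\Sigma,V)$ under the bound \eqref{supbound}, I would take two such solutions $u,\hat u$ and set $v=u-\hat u$. Subtracting the equations and using that the coefficients are evaluated at arguments in $\Zc_1\oplus B_R(V)$ (so the differentiated-coefficient terms are controlled, and $\Bc$ satisfies the one-sided bound \eqref{eq:evassumptionforward}), a standard $L^2$-type energy identity for $v$ — analogous to \eqref{energy-identity} with $\ell=0$ and now \emph{no} source — gives $\del{t}\nnorm{v}_0^2\ge \frac{1}{|t|}(2\eta-\gamma_1\beta)\nnorm{v}_0^2-C\nnorm{v}_0^2$ on $[T_0,0)$, hence $\del{t}(|t|^{-2\eta+\gamma_1\beta}e^{C|t|}\nnorm{v(t)}_0^2)\ge 0$; since $v(t)\to 0$ as $t\nearrow 0$ (both solutions do, by \eqref{supbound} and $\lambda>0$ via \eqref{eq:solution1estimateboundedness}-type control, or more directly because any such solution is forced to decay — this is exactly where the forward structure bites), integrating forward from $t$ towards $0$ forces $\nnorm{v(t)}_0=0$, then a higher-order energy argument (Gr\"onwall on $\nnorm{v}_k^2$, using the already-established $\nnorm{v}_0\equiv0$ to absorb the interpolation terms as in the derivation of \eqref{k-estimate}) gives $v\equiv 0$. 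The main obstacle is the limit step: one must ensure the a priori estimates really are uniform down to $t=0$ (this is guaranteed by Proposition~\ref{prop:forward}, whose $\delta,\Delta,C$ are $\matchtime$-independent) and that the compactness/diagonal extraction produces a genuine classical solution on all of $[T_0,0)$ with the stated regularity — in particular that the limit picks up the correct ``zero asymptotic data'' behaviour encoded in \eqref{eq:solution1estimateboundedness} rather than some spurious solution with extra growth, which is precisely why the uniform weighted bound $\norm{|t|^{-\lambda}u_j}_{H^k}\le C\int_t^0\norm{\Ft}\,ds$ must be established before taking limits.
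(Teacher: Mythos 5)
Your existence argument follows the same strategy as the paper — build solutions $u_n$ with zero data at $t_n\nearrow 0$ via Proposition~\ref{prop:forward}, exploit the $t_*$-independence of $\Delta$ and $C$ there, and pass to the limit — but you carry out the limit extraction with Arzel\`a--Ascoli on compact subintervals $[T_0,\tau]$ plus a diagonal argument, whereas the paper works in the weighted spaces $X^{s_1,s_2,p}_\rho$, extends $u_n$ by zero to all of $[T_0,0)$, and uses reflexivity, the Banach--Alaoglu theorem, and a compact embedding to extract weak and strong limits simultaneously and prove the weighted bound \eqref{eq:solution1estimateboundedness} by a test-function argument with $p\searrow 1$. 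Both routes work; the paper's is somewhat more systematic for transferring the weighted estimate \eqref{u_nCbnd} to the limit, while yours relies on pointwise weak lower semicontinuity of the $H^k$ norm, which is fine here since the uniform bound holds for each fixed $t$ once $t_j>t$.

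Your uniqueness step has a genuine gap. The difference $w=\hat u-u$ does \emph{not} satisfy a sourceless equation: subtracting the equations produces a source $G$ containing, most problematically, the term $\frac{1}{t}\bigl(\Bc(t,\hat u)-\Bc(t,u)\bigr)u$, which is formally $O(|t|^{-1}\norm{w})$. The reason the argument closes is precisely the decay estimate $\norm{u(t)}_{H^k}\lesssim|t|^\lambda$ from \eqref{ut2bnd}, which together with $\lambda>0$ promotes this to $\norm{G}_{L^2}\lesssim|t|^{\lambda-1}\norm{w}_{L^2}$, an \emph{integrable} perturbation of the Gr\"onwall inequality. Saying ``no source'' and writing $-C\nnorm{v}_0^2$ skips the single most delicate step of the uniqueness proof. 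Moreover, you should not try to argue $\hat u\to 0$: the uniqueness class only assumes the $L^\infty$ bound \eqref{supbound}, not any decay, and such an argument is both unjustified and unnecessary. The correct closing move, as in the paper, is that Gr\"onwall gives $\norm{w(t)}_{L^2}\lesssim(|t_*|/|t|)^{2\eta-\gamma_1\beta}\norm{w(t_*)}_{L^2}$ with $\norm{w(t_*)}_{L^2}$ uniformly bounded by \eqref{supbound}; since $2\eta-\gamma_1\beta>0$, sending $t_*\nearrow 0$ kills the right-hand side. Once $w\equiv0$ in $L^2$ for smooth $w$, you are done; the extra higher-order Gr\"onwall step you propose is superfluous.
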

\begin{proof}$\;$

\noindent \underline{Existence:}
Let $\{t_n\}_{n=1}^\infty \subset [T_0,0)$ be a monotonically increasing sequence satisfying $\lim_{n\rightarrow \infty} t_n =0$. 
Then by Proposition \ref{prop:forward},
we know, for $\delta>0$ small enough, that there exist solutions
\begin{equation*}
u_n \in C^0\bigl([T_0,t_n],H^k(\Sigma,V)\bigr)\cap C^1\bigl([T_0,t_n],H^{k-1}(\Sigma,V)\bigr) 
\end{equation*}
of \eqref{symivp.1} satisfying the initial condition
\begin{equation} \label{u_nID}
    u_n(t_n)=0.
\end{equation}
Moreover, there exist constants $\Delta,C$ that satisfy
$\Delta < R/C_{\text{Sob}}$ and
are both independent of $n$ 
such that the solutions $u_n$ are bounded by
\begin{equation} \label{u_nDeltabnd}
  \sup_{t\in[T_0,t_n]}
  \norm{u_n(t)}_{H^k(\Sigma)}\le\Delta
\end{equation}
and
\begin{equation}\label{u_nCbnd}
\norm{|t|^{-\lambda}u_n(t)}_{H^k(\Sigma)}
\leq C\int_t^{t_n} \norm{\Ft(s)}_{H^k(\Sigma)}\, ds
\end{equation}
for all $t\in [T_0,t_n]$, where
here, and below $\Ft(s)$ is used to denote $\Ft(s,w_2(s))$.
Throughout this proof, $w_1$ and $w_2$ are taken to be fixed and for brevity, we omit these from the argument list for all maps. As a consequence of the Moser and Sobolev inequalities, all constants in the following depend on $w_1$ and $w_2$ only via their uniform $H^k(\Sigma)$ bounds over $[T_0,0)$.

Now, using the evolution equation \eqref{symivp.1}, which $u_n$ satisfies, we can express $\del{t}u_n$ in terms of $u_n$ and its spatial derivative
as follows
\begin{equation} \label{dtu_n}
    t\del{t}u_n= B^0(t,u_n)^{-1}\bigl(-t B^i(t,u_n)\nabla_i u_n
    + \Bc(t,u_n)u_n + t F(t,u_n) \bigr).
\end{equation}
With the help of the calculus inequalities, in particular the Sobolev, Product and Moser calculus inequalities, see  \cite[Ch.~13, \S 2 \&  3]{taylor2011}, it is then not difficult to
verify from the formula \eqref{dtu_n} and the coefficient assumptions, see Definition \eqref{def:CansymmhypFuchssystems}, that
\eqref{Fbnd} and \eqref{u_nDeltabnd} imply the uniform
bound
\begin{equation} \label{dtu_nbnd}
  \int_{t_n}^0 
  \norm{t\del{t}u_n(s)}_{H^{k-1}(\Sigma)}\, ds\lesssim 1, \quad
  n\geq 1.
\end{equation}

To proceed, we define, for $s_1, s_2\in \Zbb_{\geq 0}$, $\rho\in \Rbb$
and $1\leq p \leq \infty$, the spaces $X^{s_1,s_2,p}_{\rho}$ as the
closure of $C^\infty([T_0,0]\times \Sigma,V)$ in the norm
\begin{equation*}
\norm{v}_{X^{s_1,s_2,p}_{\rho}}= \sum_{0\leq i\leq s_1}
\Bigr\|\big\||t|^{-\rho+i}\del{t}^iv(t)\bigr\|_{H^{s_2-i}(\Sigma)}\Bigr\|_{L^p([T_0,0))}.
\end{equation*}
The spaces $X^{s_1,s_2,p}_{\rho}$ are reflexive for $1<p<\infty$,
and we know from the Rellich-Kondrachov theorem \cite[Thm.~6.3]{AdamsFournier:2003} that
the inclusion\footnote{The only thing that is not standard about this statement is due to the weighting of the time derivatives by powers of $t$. However, by introducing a new time coordinate via $\tau=-\ln(-t)$, these weighted spaces can be turned into exponential weighted Sobolev space on $(e^{-|T_0|},\infty)$. Compactness for the inclusions of the  exponential weighted spaces then follows easily from the Rellich-Kondrachov theorem on unweighted Sobolev space and a covering argument; see \cite[\S 3.2]{Oliynyk:AHP_2006} for details.}
\begin{gather} \label{compact}
  X^{1,k,1}_{\rhot} \subset X^{0,k-1,q}_{\rho}  
\end{gather}
is compact for $1\leq q < \infty$ and $\rhot > \rho$.

Next, we extend the solutions $u_n$ to the whole time interval $[T_0,0)$ by defining 
\begin{equation} \label{ut_ndef}
    \ut_n(t,x) = H_{n}(t)u_n(t,x)
\end{equation}
where 
\begin{equation*}
    H_n(t) = \begin{cases} 1 & \text{if $t\leq t_n$}\\
    0 & \text{if $t>t_n$}\end{cases}
\end{equation*}
is the unit step function with jump at $t_n$. Then, differentiating $\ut_n$, we see, in the sense of distributions, that 
\begin{equation*}
    \del{t} \ut_n(t,x) = u_n(t,x)\del{t}H_{n}(t)
    + H_{n}(t)\del{t}u_n(t,x) =  u_n(t,x)\delta_{t_n}(t) + H_{n}(t)\del{t}u_n(t,x),
\end{equation*}
which by \eqref{u_nID} reduces to
\begin{equation} \label{dtut_n}
    \del{t} \ut_n(t,x) = H_{n}(t)\del{t}u_n(t,x).
\end{equation}
By \eqref{u_nDeltabnd}, \eqref{dtu_nbnd}, \eqref{ut_ndef}, and \eqref{dtut_n}, we see that the sequence $\ut_n$ is bounded by
\begin{equation}
  \label{eq:sequencebounded}
\norm{\ut_n}_{X^{0,k,p}_{0}} \leq \norm{1}_{L^p([T_0,0))}\Delta 
\AND
    \norm{\ut_n}_{X^{1,k,1}_{0}} \leq C
  \end{equation}
for $1<p<\infty$ and $n\geq 1$ where the constant $C$ is independent of $n$.
Since $X^{0,k,p}_{0}$ is reflexive for $1<p<\infty$, we conclude from
the first inequality in \eqref{eq:sequencebounded} and the sequential Banach–Alaoglu theorem\footnote{See Theorem~3.17 in \cite{rudin1990}.}
the existence of a subsequence of $\ut_n$, which we also denote by $\ut_n$, that converges weakly to an element $u\in X^{0,k,p}_{0}$
satisfying the estimate
\begin{gather*}
\norm{u}_{X^{0,k,p}_{0}} \leq \norm{1}_{L^p([T_0,0))}\Delta 
\end{gather*}
for $1<p<\infty$.
Using the fact that $\sup_{T_0 \leq t\leq 0}|f(t)|= \lim_{p\rightarrow \infty}\bigl( \frac{1}{|T_0|}\int_{T_0}^0 |f(\tau)|^p \, d\tau\bigr)^{\frac{1}{p}}$
holds for measurable functions $f(t)$ on $[T_0,0)$ satisfying $|f|^{p_*}\in L^1([T_0,0))$ for some $p_*\in \Rbb$, we deduce 
from the above inequality that 
\begin{gather*}
\norm{u}_{X^{0,k,\infty}_{0}} \leq \Delta.
\end{gather*}
Moreover, by the compactness of the inclusion \eqref{compact},
we can, by selecting a further subsequence of $\ut_n$ if necessary,
assume that the sequence $\ut_n$ converges strongly in $X^{0,k-1,q}_{\epsilon}$ to $u$ for any $\epsilon>0$ and $1\leq q <\infty$ due to the second inequality in \eqref{eq:sequencebounded}. Since $\Delta < R/C_{\text{Sob}}$,
we note from the above bound and Sobolev's inequality \eqref{Sobolev} that
\begin{equation} \label{supboundA}
\max\Bigl\{\norm{u}_{L^\infty([T_0,0)\times \Sigma)},
    \norm{\nabla u}_{L^\infty([T_0,0)\times \Sigma)}\Bigr\} < R
    \AND \sup_{t\in [T_0,0)}\norm{u(t)}_{H^k(\Sigma)} < \frac{R}{C_{\text{Sob}}}.
\end{equation}

Since the sequence $t_n$ is monotonically increasing and converges to $0$ as $n\rightarrow \infty$, it is clear from the definition \eqref{ut_ndef} that for any test function $\psi\in C^\infty_0([T_0,0)\times\Sigma,V)$ there exists an $N=N(\psi)\in \Zbb_{>0}$ such that
\begin{equation} \label{psiut_n}
    \del{t}\psi\ut_n = \del{t}\psi u_n \AND  \psi\ut_n = \psi u_n,\quad n\geq N,
\end{equation} 
and so we conclude from
the weak convergence $\ut_n \rightharpoonup u\in X^{0,k,p}_0$ that
\begin{equation*}
    \ip{\del{t}\psi}{u}= \lim_{n\rightarrow\infty}\ip{\del{t}\psi}{u_n}
    = -\lim_{n\rightarrow\infty}\ip{\psi}{\del{t}u_n},
\end{equation*}
where $\ip{\cdot}{\cdot}$ is the $L^2$ inner-product on $[T_0,0)\times\Sigma$. Using \eqref{dtu_n} and \eqref{psiut_n}, we
can write this as
\begin{equation*}
    \ip{\del{t}\psi}{u}
    = -\lim_{n\rightarrow\infty} \bigl\langle\psi \Bigl|
    B^0(t,\ut_n)^{-1}\bigl(-B^i(t,\ut_n)\nabla_i \ut_n
    + t^{-1}\Bc(t,\ut_n)\ut_n + F(t,\ut_n) \bigr)\bigr \rangle.
\end{equation*}
It is not difficult to verify from 
the strong convergence $\ut \rightarrow u \in X^{0,k-1,q}_{\epsilon}$ that, in the limit $n\rightarrow \infty$, we
can replace $\ut_n$ with $u$ in the above expression to
get 
\begin{equation*}
  \ip{\del{t}\psi}{u}
    = -\bigl\langle\psi \Bigl|
    B^0(t,u)^{-1}\bigl(-B^i(t,u)\nabla_i u
    + t^{-1}\Bc(t,u)u + F(t,u) \bigr)\bigr \rangle.
\end{equation*}
Since the test function $\psi$ was chosen arbitrarily, we
conclude that $u$ defines a weak solution
of the Fuchsian system \eqref{symivp.1} 
where
\begin{equation*}
    \del{t}u= B^0(t,u)^{-1}\bigl(-B^i(t,u)\nabla_i u
    + t^{-1}\Bc(t,u)u + F(t,u)\bigr).
\end{equation*}
By another application of the calculus inequalities (i.e. Sobolev, Product and Moser calculus inequalities, see  \cite[Ch.~13, \S 2 \& 3]{taylor2011}), we get, for each $T_1\in (T_0,0)$, that $\sup_{t\in[T_0,T_1)}\norm{t\del{t}u(t)}_{H^{k-1}(\Sigma)}\lesssim 1$, and hence, that
\begin{equation} \label{uregANN}
    u \in L^\infty\bigl([T_0,0),H^k(\Sigma)\bigr)\cap W^{1,\infty}_{\text{loc}}\bigl([T_0,0),H^{k-1}(\Sigma)\bigr).
\end{equation}
By standard local-in-time existence and uniqueness theory for symmetric 
hyperbolic equations, e.g. \cite[Ch.~16, Prop.~1.4.]{taylor2011}, it follows that the solution $u$ enjoys the improved regularity
\begin{equation*} 
    u \in C^0_b([T_0,0),H^k(\Sigma,V)\bigr)\cap C^{1}\bigl([T_0,0),H^{k-1}(\Sigma,V)\bigr) 
    \subset C^1\bigl([T_0,0)\times \Sigma,V\bigr),
\end{equation*}
and consequently, $u$ defines a classical solution
of \eqref{symivp.1} on $[T_0,0)\times\Sigma$. 

To finish the existence part of the proof, let $\eta \in C^\infty_0([T_0,0))$ be any non-negative test function. Then we observe from
\eqref{u_nCbnd} and \eqref{psiut_n} that there
exists an $N=N(\eta)\in \Zbb_{>0}$ such that
\begin{equation*}
    \norm{\eta(t)|t|^{-\lambda}\ut_n(t)}_{H^k(\Sigma)}\lesssim \eta(t)\int^{0}_{t} \norm{\Ft(s)}_{H^k(\Sigma)}\, ds \leq \delta \eta(t)
\end{equation*}
for all $t\in [T_0,0)$, $n\geq N$ and $1<p<\infty$.
But this implies
\begin{equation*}
\norm{\eta\ut_n}_{X^{0,k,p}_{0}}^p 
\lesssim \int_{T_0}^0 \eta(t)^p \left(\int_t^{0} \norm{\Ft(s)}_{H^k(\Sigma)}\,ds\right)^p \, dt,
\quad n\geq N,
\end{equation*}
and so, we conclude via the weak convergence 
$\eta\ut_n \rightharpoonup \eta u\in X^{0,k,p}_{0}$
that
\begin{equation*}
\norm{\eta u}_{X^{0,k,p}_{0}}^p=\int_{T_0}^0 \eta(t)^p \norm{|t|^{-\lambda}u(t)}_{H^k(\Sigma)}^p\, dt
\lesssim \int_{T_0}^0 \eta(t)^p \left(\int_t^{0} \norm{\Ft(s)}_{H^k(\Sigma)}\,ds\right)^p \, dt, \quad 1<p<\infty.
\end{equation*}
Letting $p\searrow 1$ in the above expression then gives 
\begin{equation*}
\int_{T_0}^0 \eta(t) \norm{|t|^{-\lambda}u(t)}_{H^k(\Sigma)}\, dt
\lesssim \int_{T_0}^0 \eta(t) \int_t^{0} \norm{\Ft(s)}_{H^k(\Sigma)}\,ds \, dt
\end{equation*}
by the Dominated Convergence Theorem. 
However, since $\eta$ was an arbitrarily chosen non-negative test function, we must have
\begin{equation}\label{ut2bnd}
\norm{|t|^{-\lambda}u(t)}_{H^k(\Sigma)}\, dt
\leq C \int_t^{0} \norm{\Ft(s)}_{H^k(\Sigma)}\,ds,
\quad T_0<t<0,
\end{equation}
which completes the
existence part of the proof.

\bigskip

\noindent \underline{Uniqueness:} To establish uniqueness, suppose
that $\uh \in C^1([T_0,0)\times \Sigma,V)$ is another classical solution of \eqref{symivp.1} that satisfies
\begin{gather}
    \max\Bigl\{\norm{\uh}_{L^\infty([T_0,0)\times \Sigma)},
    \norm{\nabla \uh}_{L^\infty([T_0,0)\times \Sigma)}\Bigr\} < R.
    \label{uhbndA}
\end{gather}
Letting  
\begin{equation*}
    w= \uh-u
\end{equation*}
denote the difference between the two solutions, a short calculation shows that $w$ satisfies
\begin{equation} \label{weqn}
B^0(t,\uh)\del{t}w+B^i(t,\uh)\nabla_i w = \frac{1}{t}\Bc(t,\uh)w
+ G,
\end{equation}
where
\begin{align*}
    G&=\bigl(B^0(t,u)-B^0(t,\uh)\bigr)\del{t}u+\bigl(B^i(t,u)-B^i(t,\uh)\bigr)\nabla_iu + \frac{1}{t}\bigl(\Bc(t,\uh)-\Bc(t,u)\bigr)u + F(t,\uh)-F(t,u)\notag \\
    &=\bigl(B^0(t,u)-B^0(t,\uh)\bigr)B^0(t,u)^{-1}\biggl(- B^i(t,u)\nabla_i u
    +\frac{1}{t}\Bc(t,u)u + F(t,u)\biggr)\notag \\
    &\qquad +\bigl(B^i(t,u)-B^i(t,\uh)\bigr)\nabla_iu + \frac{1}{t}\bigl(\Bc(t,\uh)-\Bc(t,u)\bigr)u + F(t,\uh)-F(t,u).
\end{align*}
Observe here that $G$ vanishes if $w=0$, i.e., $\hat u=u$.

Next, we derive from the evolution equation \eqref{weqn} the $L^2$-energy identity
\begin{equation} \label{wenergyA}
\frac{1}{2}\del{t}\nnorm{w}^2 = -\frac{1}{|t|}\ip{w}{\Bc(t,\uh)w}+
\frac{1}{2}\ip{w}{\Div\! B(t,\uh,\nabla\uh) w} + \ip{w}{G}
\end{equation}
where 
\begin{equation*}
    \nnorm{w}^2 = \ip{w}{B^0(t,\uh)w}
\end{equation*}
is the energy norm.
But we have, from the bounds \eqref{uhbndA}, and the fact that coefficients $B^0$ and $B^i$ satisfy the assumptions of Definition \ref{def:CansymmhypFuchssystems}, see in particular \eqref{B0sym} and \eqref{divBbnd.1} with $\mu=0$ and $p=1$, that
\begin{equation*}
|\ip{w}{\Div\! B(t,\uh,\nabla\uh) w}| \leq \theta \norm{w}^2_{L^2} 
+ |t|^{-1}{\beta} \norm{w}_{L^2}^2,
\end{equation*}
\begin{equation} \label{wnormequiv}
    \frac{1}{\gamma_1}\norm{w}^2_{L^2(\Sigma)} \leq 
    \nnorm{w}^2 \leq \gamma_2\norm{w}^2_{L^2(\Sigma)}
\end{equation}
and
\begin{equation*}
    \ip{w}{\Bc(t,\uh)w}\leq -\eta \nnorm{w}^2.
\end{equation*}
With the help of these inequalities and the energy identity
\eqref{wenergyA}, we obtain the
energy estimate 
\begin{equation} \label{wenergyB}
\del{t}\nnorm{w}^2 \geq \frac{2\eta-\gamma_1\beta}{|t|} \nnorm{w}^2
 - \theta\nnorm{w}^2+ \ip{w}{G}.
\end{equation}

Next, from the bounds \eqref{supboundA}, \eqref{ut2bnd} and \eqref{uhbndA}, 
it is not difficult to verify from the assumptions from Definition \ref{def:CansymmhypFuchssystems} on the coefficients $B^0$, $B^i$, $\Bc$ and $F$, and H\"{o}lder's and Sobolev's inequalities that we can estimate $G$ by
\begin{equation*}
    \norm{G}_{L^2} \lesssim \frac{1}{|t|^{1-\lambda}}\norm{w}_{L^2}
\end{equation*}
uniformly for all $t\in [T_0,0)$. Observe that it is crucial for this argument here to exploit the positive decay for $u$ implied by \eqref{ut2bnd} in the limit $t\nearrow 0$ and the fact that $\lambda$ is assumed to be positive.
Using this, we get from  \eqref{wenergyB} and the Cauchy-Schwartz inequality that
\begin{equation*}
\del{t}\nnorm{w}^2 \geq \frac{2\eta-\gamma_1\beta}{|t|} \nnorm{w}^2
-\frac{C}{|t|^{1-\lambda}}\nnorm{w}^2
\end{equation*}
for some constant $C>0$. From this differential inequality, we then
conclude, via  an application of Gr\"{o}nwall's inequality, that
\begin{equation*}
\norm{w(t)}_{L^2(\Sigma)} \lesssim \biggl(\frac{|t_*|}{|t|}
\biggr)^{2\eta-\gamma_1 \beta}\norm{w(t_*)}_{L^2(\Sigma)} , \quad T_0 \leq t < t_*.
\end{equation*}
But by H\"{o}lder's
inequality we have that $\norm{w(t_*)}_{L^2(\Sigma)}\lesssim \norm{w(t_*)}_{L^\infty(\Sigma)}$, and so we see from
\eqref{dtu_nbnd} and \eqref{wnormequiv} that
\begin{equation*}
\lim_{t_*\nearrow 0} \biggl(\frac{|t_*|}{|t|}
\biggr)^{2\eta-\gamma_1 \beta}\norm{w(t_*)}_{L^2(\Sigma)} = 0
 \end{equation*}
since $2\eta-\gamma_1 \beta>0$ by assumption.
 Thus we conclude that $w=0$
 in $[T_0,0)\times \Sigma$, which establishes uniqueness.
\end{proof}

\subsubsection{Completion of the proof of  Theorem \ref{thm:mainresult} }

We are now ready to complete the proof of Theorem \ref{thm:mainresult}.
By Lemma \ref{lem:trafoproof}, we know that the symmetric hyperbolic Fuchsian system \eqref{symivp.1}, which satisfies the hypotheses of Theorem \ref{thm:mainresult}, can be transformed into a canonical symmetric hyperbolic Fuchsian system with $\lambda=\alpha>0$. The existence and uniqueness of solutions to the singular initial value problem for this canonical Fuchsian system is guaranteed by Proposition \ref{prop:canonicalSIVP}. Transforming back from the canonical Fuchsian system to the starting one, see Lemma  \ref{lem:trafoproof}, it is then not difficult to verify from the properties of the canonical solution, as determined by Proposition \ref{prop:canonicalSIVP},  that the corresponding solution to the singular initial value problem for the original Fuchsian system satisfies all the properties as stated in Theorem \ref{thm:mainresult}.

\section{Euler equations on Kasner backgrounds}
\label{sec:applications}

In this section, we state and prove a precise version of Theorem \ref{thm:informal}. This precise version is separated into two parts.
The first part concerns the existence and uniqueness of solution to the singular initial value problem for the Euler equations (i.e. the forward problem) and statement of the result is given below in \Theoremref{thm:fluidresult}.
The second part, which is presented below in \Theoremref{thm:fluidresult2}, expresses the nonlinear stability of solutions from \Theoremref{thm:fluidresult} under perturbations of the initial data at $t=T_0$ sufficiently close to $0$ (i.e. the backward problem).

In the following, we find it advantageous to change from the coordinates $(\tilde t,\tilde x,\tilde y,\tilde z)$  in \eqref{Kasner-k-original} to a new set of coordinates $(t,x,y,z)$ defined by
\begin{equation}
\label{eq:KasnerCoordTrafo}
\tilde t=\frac{4}{{K}^2+3} (-t)^{\frac{{K}^2+3}4},\quad 
\tilde x=\left(\frac{{K}^2+3}{4}\right)^{\frac{{K}^2-1}{{K}^2+3}} x,\quad
\tilde y=\left(\frac{{K}^2+3}{4}\right)^{\frac{2(1-{K})}{{K}^2+3}} y,\quad
\tilde z=\left(\frac{{K}^2+3}{4}\right)^{\frac{2(1+{K})}{{K}^2+3}} z.
\end{equation}
A short calculation then shows that, in terms of these new coordinates, the Kasner metric \eqref{Kasner-k-original} takes the form 
\begin{equation}
\label{Kasner-k}
g = (-t)^{\frac{{K}^2-1}{2}} \big( - d t\otimes dt + dx\otimes dx \big) + (-t)^{1-{K}} dy\otimes dy +(-t)^{1+{K}} dz\otimes dz.
\end{equation}
As a consequence of \eqref{eq:Kasnerrel}, the Kasner exponents $p_1$, $p_2$ and $p_3$ can be expressed in terms of the single parameter $K\in\R$ by
\begin{equation}
\label{eq:Kasnerexpo}
 p_1 =({K}^2-1)/({K}^2+3), \quad
 p_2 =2(1-{K})/({K}^2+3),\quad
 p_3 =2(1+{K})/({K}^2+3),
\end{equation}
which is often referred to as the \emph{asymptotic velocity}.
Notice that, in agreement with the conventions in \Sectionref{sec:SingSymHypSyst}, the time $t$ is negative now. We will always assume that $x$, $y$ and $z$ are periodic over the domain $[0,2\pi)$, 
and rather than using $c_s^2$ to parameterise the square of the sound speed, we will instead use the constant
\begin{equation}
  \label{eq:gammadef}
  \gamma=c_s^2+1
\end{equation}
as is common in the mathematical cosmology literature. With this definition, the equation of state \eqref{eq:EulerEOS2} then reads
\begin{equation}
  \label{eq:EulerEOS}
  P=(\gamma-1)\rho.
\end{equation}
We  will also interpret the fluid vector field $V=V^\alpha\del{\alpha}$ as a
 time-dependent section of the trivial vector bundle
 \[\Vbb=\Tbb^3\times \Rbb^4\]
 that sits over the spatial manifold $\Sigma=\Tbb^3$ by
 using the using the representation $V=V^\alpha\del{\alpha}$ in terms
of the coordinate frame $\{\partial_t, \partial_x, \partial_y,  \partial_z\}$ to identify $V=V^\alpha\del{\alpha}$ with its components $V^0$, $V^1$, $V^2$, $V^3$.
Furthermore, on the spatial manifold $\Sigma=\Tbb^3$, we employ the flat metric $dx\otimes dx+dy\otimes dy+dz\otimes dz$, which allows us to identify the associated covariant derivative, see \Sectionref{sec:SingSymHypSyst}, with the spatial partial derivatives
$\del{x},\del{y},\del{z}$, while on $\Vbb$, we use the metric $h$ defined by
\begin{equation}
\label{eq:EulerBundleMetric}
h(V_1,V_2) = \delta_{\alpha\beta}V_1^\alpha V_2^\beta.
\end{equation}

\subsection{Singular initial value problem} 
\label{sec:SIVPEuler}

\subsubsection{Statement of the theorem}

We are now in the position to formulate our first main result concerning relativistic fluids on Kasner spacetimes. This theorem establishes the existence of solutions to the relativistic Euler equations near the Kasner singularities with the asymptotics stated in Theorem \ref{thm:informal}.

\begin{thm}[Singular initial value problem for fluids on Kasner spacetimes]
  \label{thm:fluidresult}
Suppose
\begin{equation}
    \label{eq:gammarestr}
    {K}\in [0,1),\quad 2>\gamma>\frac{{K}^2+2 {K}+5}{{K}^2+3},
  \end{equation}
and set
 \begin{equation}
    \label{eq:GammaDef}
    \Gamma_1=\frac {3\gamma-2-{K}^2(2-\gamma)}4,\quad
    \Gamma_2=\frac {3\gamma-5+2{K}+{K}^2(\gamma-1)}4,\quad
    \Gamma_3=\frac {3\gamma-5-2{K}+{K}^2(\gamma-1)}4.
  \end{equation}
Furthermore, suppose $k\in \Zbb_{\geq 3}$,
$\ell \in \Zbb_{\geq 1}$ satisfies
$\ell>\Gamma_1/q$ for
$q=\min\{1-\Gamma_1,2\Gamma_3\}$, and set
$\epsilon=\min\{\Gamma_1+\Gamma_3,1,\ell (1-\Gamma_1), 2\ell \Gamma_3\}-\Gamma_1$.

\medskip

\noindent Then for each choice of 
 {asymptotic data} $v_*=(v_*^0,\ldots,v_*^3)^{\tr}\in H^{k+\ell}(\Sigma)$ where $v_*^0>0$, there exists, for $T_0<0$ close enough to zero, a solution 
\begin{equation}
\label{eq:EulerregNN}
V \in C^0_b\bigl([T_0,0),H^{k}(\Sigma)\bigr)\cap C^1\bigl([T_0,0),H^{k-1}(\Sigma)\bigr)
\subset C^1\bigl([T_0,0)\times \Sigma,\Vbb\bigr)
\end{equation}
of the Euler equations \eqref{eq:AAA1} with equation of state \eqref{eq:EulerEOS} satisfying
the decay estimate
\begin{align}
  &\norm{(-t)^{-\Gamma_1}V^0(t)-v_*^0}_{H^k(\Sigma)}
  +\norm{(-t)^{-2\Gamma_1}V^1(t)-v_*^1}_{H^k(\Sigma)}
  \notag \\
  &\hspace{2.0cm}+\norm{(-t)^{-2\Gamma_2}V^2(t)-v_*^2}_{H^k(\Sigma)} +\norm{(-t)^{-2\Gamma_3}V^3(t)-v_*^3}_{H^k(\Sigma)}\lesssim |t|^{\epsilon}
  \label{eq:Eulerdecay}
\end{align}
for all $t\in [T_0,0)$.

\medskip

\noindent Furthermore, if $\tilde{V} \in C^1\bigl([T_0,0)\times\Sigma, \Vbb\bigr)$ is {any} classical solution of the Euler equations \eqref{eq:AAA1} with equation of state \eqref{eq:EulerEOS} for which
\begin{equation} \label{L^infty-propertyEulerAppl}
\sup_{t\in [T_0,0)}\max\Bigl\{\bigl\| |t|^{-\mu}T^{-1}(V(t)-\Vt(t))\bigr\|_{L^\infty(\Sigma)},\bigl\| |t|^{-\mu}T^{-1}(\nabla V(t)-\nabla\Vt(t))\bigr\|_{L^\infty(\Sigma)}\Bigr\} \lesssim 1
\end{equation}
for some $\mu>\Gamma_1$ where
\begin{equation}
 \label{eq:EulerResc0}
  T:=\mathrm{diag}\bigl(|t|^{\Gamma_1},|t|^{\Gamma_1}, |t|^{\Gamma_2}, |t|^{\Gamma_3}\bigr),
\end{equation}
then $V=\tilde V$ in $[T_0,0)\times \Sigma$.
\end{thm}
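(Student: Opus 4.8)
The plan is to recast the Euler equations \eqref{eq:AAA1} into the symmetric hyperbolic Fuchsian form \eqref{symivp.1} and invoke \Theoremref{thm:mainresult}, with uniqueness following from the uniqueness clause of that theorem. \emph{Fuchsian reduction.} Working in the coordinates \eqref{eq:KasnerCoordTrafo}, in which the Kasner metric reads \eqref{Kasner-k}, I would first pass to the rescaled fluid variables $W=(W^0,W^1,W^2,W^3)^{\tr}$ suggested by \eqref{eq:fluidorthonormal}, namely
\begin{equation*}
V^0 = (-t)^{\Gamma_1}W^0,\quad V^1 = (-t)^{2\Gamma_1}W^1,\quad V^2 = (-t)^{2\Gamma_2}W^2,\quad V^3 = (-t)^{2\Gamma_3}W^3,
\end{equation*}
equivalently $V = TE\,W$ with $T$ as in \eqref{eq:EulerResc0} and $E = \mathrm{diag}(1,(-t)^{\Gamma_1},(-t)^{\Gamma_2},(-t)^{\Gamma_3})$. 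A direct computation shows that the exponents \eqref{eq:GammaDef} are precisely those for which the constant configuration $W\equiv v_*$ solves the resulting system up to a residual of size $|t|^{\lambda}$ with $\lambda > -1$; in effect, the $\Gamma_i$ are fixed by an indicial condition that kills the $t^{-1}$ forcing of the leading order. Substituting $W = v_* + u$ and clearing powers of $(-t)$ in \eqref{eq:AAA1} then yields a first-order quasilinear system of the form \eqref{symivp.1} for the remainder $u$, with prescribed sections $w_1$, $w_2$ built algebraically out of $v_*$ and its spatial derivatives (the latter produced by rewriting $\nabla$ in coordinates).

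\emph{Verification of \Defref{def:symmhypFuchssystems}.} I would then check, term by term, that this system is a symmetric hyperbolic Fuchsian system for suitable constants: the symmetry \eqref{B0sym} and positivity \eqref{B0pos} of $B^0$ are inherited from the manifestly symmetric--hyperbolic structure of \eqref{eq:AAA1}, the positivity using $v_*^0>0$, which for $|T_0|$ small keeps $V^0>0$ and $W$ inside $B_R(\Vbb)$; the source splits as in \eqref{fexp} with $\lambda$ equal to the order of the leading-order residual, which together with a choice of $\mu$ and some $\alpha>0$ yields \eqref{eq:minimaldecayassumption}; the spatial coefficient splits as in \eqref{Bexp}, with the exponent $q$ coming out as the stated $q=\min\{1-\Gamma_1,\,2\Gamma_3\}$; and $\Div\! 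B$ obeys \eqref{divBbnd.1} for appropriate $\theta,\beta\ge 0$. The stronger assumption $v_*\in H^{k+\ell}$ is needed exactly so that $w_1$, $w_2$---which contain up to $\ell$ spatial derivatives of $v_*$---lie in $H^k(\Sigma)$; this is also the origin of the condition $\ell>\Gamma_1/q$ and, once one tracks how the decay of $\Ft$ (improved by the $\ell$-fold extra regularity of $v_*$) enters \eqref{eq:solution1estimateboundednessNNNN}, of the exponent $\epsilon$.

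\emph{The spectral condition (the crux).} The decisive hypothesis of \Theoremref{thm:mainresult} to verify is the operator inequality \eqref{eq:evassumptionforwardN}, $\Bc(t,z_1,|t|^\mu v)\le(\mu-\eta)B^0(t,z_1,|t|^\mu v)$, for some $\eta>0$ meeting the budget \eqref{eq:forwardposN}. The principal part of $\Bc$ is block-diagonal in the four components of $W$, and its eigenvalues relative to $B^0$ are governed by the differences $\mu-2\Gamma_i$, which in the physical variables reduce to (positive multiples of) $c_s^2-p_i$. Since $K\in[0,1)$ forces $\Gamma_1=\max\{\Gamma_1,\Gamma_2,\Gamma_3\}$---equivalently $p_3=\max\{p_1,p_2,p_3\}$ via \eqref{eq:Kasnerexpo}---the inequality holds with a strictly positive gap $\eta$ exactly when $c_s^2$ exceeds every Kasner exponent, i.e. exactly under \eqref{eq:gammarestr}. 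Shrinking $|T_0|$, hence the size of $u$, lets one absorb the off-diagonal, lower-order part of $\Bc$ arising from the $V_\alpha V_\beta/(V_\lambda V^\lambda)$ term in \eqref{eq:AAA1}, and one must at the same time verify that $\eta$ can be taken large enough to beat $\gamma_1\beta+k(k+1)\mathtt{b}\gamma_1$ in \eqref{eq:forwardposN}, with $\mathtt{b}$ the connection quantity \eqref{eq:defb}. Establishing this matrix inequality for the full operator $\Bc$, not merely its diagonal, while simultaneously respecting \eqref{eq:forwardposN}, is the step I expect to be the main obstacle.

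\emph{Conclusion and uniqueness.} With all hypotheses of \Theoremref{thm:mainresult} in place---and since \eqref{eq:forwardsmallnessTh} holds for $|T_0|$ small because $\int_{T_0}^0|s|^{p-1}\norm{\Ft(s,w_2(s))}_{H^k(\Sigma)}\,ds\lesssim|T_0|^p$---\Theoremref{thm:mainresult} yields a unique $u\in C^0_b([T_0,0),H^k(\Sigma))\cap C^1([T_0,0),H^{k-1}(\Sigma))$ obeying \eqref{eq:solution1estimateboundednessNNNN} and \eqref{L^infty-property}. Undoing the substitution $V=TE(v_*+u)$ gives a solution of \eqref{eq:AAA1} with the regularity \eqref{eq:EulerregNN}, and the weighted bound \eqref{eq:solution1estimateboundednessNNNN} translates, after estimating the $\Ft$-integral, into the decay estimate \eqref{eq:Eulerdecay} with $\epsilon=\min\{\Gamma_1+\Gamma_3,1,\ell(1-\Gamma_1),2\ell\Gamma_3\}-\Gamma_1$; since $V^0=(-t)^{\Gamma_1}(v_*^0+O(|t|^\epsilon))$ dominates the remaining components and $v_*^0>0$, $V$ is timelike near $t=0$. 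For uniqueness, given any classical solution $\tilde V$ satisfying \eqref{L^infty-propertyEulerAppl} with some $\mu>\Gamma_1=\max\{\Gamma_1,\Gamma_2,\Gamma_3\}$, its rescaled remainder $\tilde u=(TE)^{-1}\tilde V-v_*$ is again a $C^1$ solution of the same Fuchsian system, and the weighted bound in \eqref{L^infty-propertyEulerAppl}---in which $\mu>\Gamma_1$ guarantees the correct componentwise decay---places $\tilde u$ inside the uniqueness class \eqref{L^infty-property} of \Theoremref{thm:mainresult}; hence $\tilde u=u$, i.e. $\tilde V=V$.
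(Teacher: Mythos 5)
Your overall framing---rescale the fluid vector, subtract a leading order term, verify the hypotheses of Definition~\ref{def:symmhypFuchssystems}, and apply Theorem~\ref{thm:mainresult}---is the right skeleton, and you correctly identify the spectral inequality~\eqref{eq:evassumptionforwardN} as the structural heart of the verification, with $\Gamma_1=\max\{\Gamma_1,\Gamma_2,\Gamma_3\}$ playing the role of $\max\{p_i\}$ in the stability condition. But there is a genuine gap that makes the argument as written fail for a large part of the parameter range: the single-step decomposition $W=v_*+u$ (equivalently $U=U_0+u$ with $U_0=\hat T^{-1}v_*$) does not produce a residual that decays fast enough to apply Theorem~\ref{thm:mainresult}.

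Concretely, inserting the naive leading order term into~\eqref{eq:Eulereqssymmhyp2N} yields a source $\widetilde F$ whose decay rate is governed by $q=\min\{1-\Gamma_1,2\Gamma_3\}$. For the Fuchsian machinery one needs $|t|^{-\lambda}\widetilde F$ bounded, while Proposition~\ref{prop:EulerProp2} forces $\lambda>\mu-1$ with $\mu>\Gamma_1$. The single-step choice therefore demands $q>\Gamma_1$, which fails whenever $\Gamma_1\ge 1/2$ or $\Gamma_3\le\Gamma_1/2$---e.g.\ for $K$ close to~$1$. The paper circumvents this by iteratively refining the leading order term through the approximate transport equations~\eqref{eq:EulereqsDecA}: Lemma~\ref{lem:iterateLOT} constructs $U_0,U_1,\dots,U_{\ell-1}$ with $\widehat T(U_m-U_{m-1})=O(|t|^{mq})$, so that using $U_*=U_{\ell-1}$ improves the residual decay to $|t|^{(\ell-1)q}\cdot|t|^{q-1}=|t|^{\ell q-1}$, and $\ell>\Gamma_1/q$ then makes $\ell q>\Gamma_1$ as required. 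Each iteration step costs one derivative, which is the actual origin of the $H^{k+\ell}$ regularity hypothesis; your explanation that ``$w_1$, $w_2$ contain up to $\ell$ spatial derivatives of $v_*$'' does not follow from a single substitution $W=v_*+u$, which produces at most one spatial derivative of $v_*$ in the coefficients. Without the iterative construction of the accurate leading order term---the key new step in the paper's argument---your proof does not close.
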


The proof of this theorem is presented in \Sectionref{sec:proofEulerSIVP}, but before we proceed with the proof, we first make a number of remarks regarding this theorem. We begin with noting that the existence result contained in Theorem \ref{thm:fluidresult} does not rely on any symmetry or isotropy assumptions.
This should be contrasted with previous results from
the literature on relativistic fluids near Kasner singularities. For example, the existence results for the Einstein-Euler equations from
\cite{beyer2017} assumes $\Tbb^2$-symmetry while those from \cite{anguige1999,tod1999,tod1999a} rely on an isotropy assumption. 

We further
observe that \Theoremref{thm:fluidresult}, by itself,  does not characterise the asymptotics of \emph{generic} solutions. After all, this theorem is concerned with a family of solutions with the particular leading-order behaviour given by \eqref{eq:Eulerdecay}, albeit with the full expected number of free data functions.
With that said, a significant result of this paper is that we are able to show in \Theoremref{thm:fluidresult2}  that the solutions from  \Theoremref{thm:fluidresult} are stable under nonlinear perturbations.

Next, we claim that the condition ${K}\in [0,1)$ of \Theoremref{thm:fluidresult} is  not much of a restriction at all. 
To see why, given an {arbitrary}
$K\in\R\setminus \{\pm 1\}$, we can employ a  diffeomorphism to map the Kasner metric \eqref{Kasner-k} to another Kasner metric \eqref{Kasner-k} with $K=\Kt$ where $\Kt \in [0,1)$ as follows. First, if ${K}<0$, we can use the diffeomorphism $(t,x,y,z)\mapsto (t,x,z,y)$ to map the Kasner metric \eqref{Kasner-k} to another Kasner metric of the same form with $\tilde{K}=-{K}>0$. Therefore, without loss of generality, we can restrict our attention to the parameter values $K\ge 0$. We further note that $K=1$ is excluded by the second inequality in \eqref{eq:gammarestr}, and therefore, we can assume that $K\neq 1$. Now, if $1<K \leq 3$, we can apply the transformation \eqref{eq:KasnerCoordTrafo} to bring the Kasner metric \eqref{Kasner-k} into the form \eqref{Kasner-k-original} with exponents given by \eqref{eq:Kasnerexpo}. Then by applying the transformation $(\tilde t,\tilde x,\tilde y,\tilde z)\mapsto (\tilde t,\tilde y,\tilde x,\tilde z)$ and following it with the inverse transformation \eqref{eq:KasnerCoordTrafo}, we obtain a Kasner metric of the form \eqref{Kasner-k} with the new value $\tilde{K}=(3-{K})/(1+{K})\in [0,1)$ up to some irrelevant additional constant factors for the spatial components. 
On the other hand if ${K}>3$, we can first apply to the Kasner metric  \eqref{Kasner-k}  the transformation \eqref{eq:KasnerCoordTrafo}. We then follow it with the transformation  $(\tilde t,\tilde x,\tilde y,\tilde z)\mapsto (\tilde t,\tilde y,\tilde z,\tilde x)$ and finish with the inverse transformation of \eqref{eq:KasnerCoordTrafo} to yield a new Kasner metric \eqref{Kasner-k}  with parameter value $\tilde{K}=({K}-3)/(1+{K})\in (0,1)$ for the Kasner metric in \eqref{Kasner-k}.

For the subsequent analysis, it is important to note
that the assumption \eqref{eq:Kasnerexpo} from the statement of Theorem \ref{thm:fluidresult} implies that the Kasner exponents satisfy $p_3\ge p_2\ge p_1$
while it is clear from \eqref{eq:Kasnerexpo} and the assumption $K\in [0,1)$ that $p_1<0$.
Due to \eqref{eq:Kasnerexpo}, \eqref{eq:gammadef} and  \eqref{eq:GammaDef}, we can
express the constants $\Gamma_1$, $\Gamma_2$ and $\Gamma_3$ as
\begin{equation}
\label{eq:GammaaltN}
\Gamma_1=\frac{c_s^2-p_1}{1-p_1},\quad \Gamma_2=\frac{c_s^2-p_2}{1-p_1} \AND \Gamma_3=\frac{c_s^2-p_3}{1-p_1},
\end{equation}
respectively. From these expressions and the inequalities $p_3\geq p_2\geq p_1$ and $p_1<0$, it is clear that the constants $\Gamma_1$, $\Gamma_2$ and $\Gamma_3$ are ordered by $\Gamma_1\ge \Gamma_2\ge \Gamma_3$. Now, while the restriction $\gamma<2$ in \eqref{eq:gammarestr} implies that $\Gamma_1<1$, we note that $\Gamma_3>0$ is a consequence of the lower bound in \eqref{eq:gammarestr}. Combining these observations, we deduce that the constants $\Gamma_1$, $\Gamma_2$ and $\Gamma_3$ satisfy 
\begin{equation}
  \label{eq:Gammarestr}
  1>\Gamma_1\ge \Gamma_2\ge \Gamma_3>0
\end{equation}
as a consequence of \eqref{eq:gammarestr}.

The asymptotic data $v_*=(v_*^0,\ldots,v_*^3)^{\tr}$ in Theorem~\ref{thm:fluidresult} determines via the decay
estimate \eqref{eq:Eulerdecay} the leading order behavior of the solution given by $V_0=\bigl(v_*^0 (-t)^{\Gamma_1}, v_*^1 (-t)^{2\Gamma_1}, v_*^2 (-t)^{2\Gamma_2}, v_*^3 (-t)^{2\Gamma_3}\bigr)^{\tr}$. As we shall see below, the proof of Theorem~\ref{thm:fluidresult} is more complicated than simply applying \Theoremref{thm:mainresult} directly to the equation satisfied by the remainder $V-V_0$. The main reason for this is that the leading order term is, in general, not accurate enough and it must be improved in an iterative process until it is accurate enough to apply \Theoremref{thm:mainresult} to the remainder. It is interesting that the more the exponents $\Gamma_1$, $\Gamma_2$ and $\Gamma_3$ in \eqref{eq:GammaDef} differ, the more steps are needed in the iterative process to improve the leading order term. As a consequence, the analysis becomes simpler as the the background spacetime becomes more isotropic. This observation is particularly relevant in the context of Remark~\ref{rem:KSF} below.
Moreover, since we lose an order of differentiability in each step of the iterative process, the differentiability of the solutions is tied to the isotropy of the background Kasner spacetime with decreasing isotropy leading to an increase in the differentiability requirements as measured by the integer $\ell$ in \Theoremref{thm:fluidresult}.

Additionally, we note that in the coordinates of the metric \eqref{Kasner-k}, the restriction that $v^0_*$ is positive means that the fluid is time-oriented \emph{towards} the big bang time $t=0$. In cosmology, where the time orientation is usually chosen such that the big bang singularity represents the past, one mostly cares about fluids with the opposite time orientation. This is easy reconciled by noting that \eqref{eq:AAA1} is invariant under the transformation $V^\alpha\mapsto -V^\alpha$. In particular, every past directed solution asserted by \Theoremref{thm:fluidresult} can be transformed into a future directed one and vice versa, and so the choice of orientation is immaterial.

As another remark, we emphasize that $T_0$ is in general expected to
depend on the choice of asymptotic data given that some fluid solutions are expected to break down earlier than others in virtue of shock formation etc.

We close the discussion of Theorem \ref{thm:fluidresult} with a remark about the quantities $\epsilon$ and $\ell$ that appear in its statement. First, we note that $\epsilon$ is positive as a consequence of the assumption $\ell>\Gamma_1/q$ from Theorem \ref{thm:fluidresult} and the inequality \eqref{eq:Gammarestr} satisfied by the constants $\Gamma_1$, $\Gamma_2$ and $\Gamma_3$.
We further observe that the value of $\epsilon$ in Theorem~\ref{thm:fluidresult} depends on the choice of $\ell$, which determines the the order of differentiability of the solutions. As the following calculations show, the largest possible value  for $\epsilon$ is $\epsilon_0=\min\{\Gamma_3,1-\Gamma_1\}$ and it can be achieved choosing $\ell\geq \Gamma_1/q+1$:
  \begin{description}
  \item [Case $\Gamma_1+2\Gamma_3\le 1$] In this case, we have $q=2\Gamma_3$. Since $\Gamma_1+\Gamma_3<\Gamma_1+2\Gamma_3\le 1$, we get $\epsilon=\min\{\Gamma_1+\Gamma_3, 2\ell \Gamma_3\}-\Gamma_1$. The largest value $\epsilon_0=\Gamma_3$ is therefore obtained for $\ell$ satisfying
\[\ell\ge \frac{\Gamma_1+\Gamma_3}{2\Gamma_3}=\frac{\Gamma_1}q+\frac 12.\]
\item [Case $\Gamma_1+\Gamma_3\le 1\le \Gamma_1+2\Gamma_3$] 
In this case, we have $q=1-\Gamma_1$ and $\epsilon=\min\{\Gamma_1+\Gamma_3, \ell (1-\Gamma_1)\}-\Gamma_1$. 
The largest value $\epsilon_0=\Gamma_3$ is therefore obtained for $\ell$ satisfying
\[\ell\ge \frac{\Gamma_1+\Gamma_3}{1-\Gamma_1}=\frac{\Gamma_1}q+\frac{\Gamma_3}{1-\Gamma_1}.\]
Hence, the condition $\ell\ge \frac{\Gamma_1}q+1$ is sufficient.
\item [Case $1\le\Gamma_1+\Gamma_3$] In this case, we have $q=1-\Gamma_1$ and $\epsilon=\min\{1, \ell (1-\Gamma_1)\}-\Gamma_1$. 
The largest value $\epsilon_0=1-\Gamma_1$ is therefore obtained for $\ell$ satisfying
\[\ell\ge \frac{1}{1-\Gamma_1}=\frac{\Gamma_1}q+1.\]
\end{description}

\begin{rem}
\label{rem:KSF}
For future applications, we make here the important observation that \Theoremref{thm:fluidresult} and \Theoremref{thm:fluidresult2} continue to apply to the larger class of \emph{Kasner-scalar field spacetimes} \cite{ames2019, rodnianski2014,rodnianski2018} within which \eqref{Kasner-k-original} -- \eqref{eq:Kasnerrel} is a special case.
As we have noted already in the introduction, the Kasner-scalar field spacetimes provide important singularity models for solutions to the Einstein equations coupled to matter fields some of which are known to have stable big-bang singularities \cite{rodnianski2014,rodnianski2018}.

By definition, the Kasner-scalar field spacetimes are spatially homogeneous solutions of the Einstein-scalar field equations (minimally coupled, zero potential) where the scalar field is $\phi=A\log t+B$ for constants $A\in [-\sqrt{2/3},\sqrt{2/3}]$ and $B\in\R$, and the metric is of the same form as \eqref{Kasner-k-original}, but with exponents 
  \begin{equation}
  \label{eq:Kasnerrelsf}
  \sum_{i=1}^3 p_i=1,\quad \sum_{i=1}^3 p^2_i=1-A^2,
\end{equation}
instead of \eqref{eq:Kasnerrel}.
The parameter $B$ is non-dynamical and can be assumed to be zero without loss of generality. In the case $A=0$, we obtain the \emph{vacuum} Kasner solutions with the exponents satisfying \eqref{eq:Kasnerrel}. The solution given by the \emph{maximal scalar field strength} $A=\pm \sqrt{2/3}$ is isotropic, i.e., $p_1=p_2=p_3=1/3$, and therefore agrees with a spatially flat Friedmann-Robertson-Walker model.

We claim that all of the results of this paper can be easily generalised to hold for the whole family of Kasner-scalar field spacetimes. The main reason for the validity of this generalisation is that the Kasner-scalar field spacetimes become less anisotropic the larger the parameter $A$, and consequently, the vacuum case $A=0$ is the most technically challenging case. Because of this property, we assert that our results, which are written for $A=0$, imply that analogous results continue to hold for $A\neq 0$. 
To see why this is, consider an arbitrary Kasner-scalar field spacetime with $A\in [-\sqrt{2/3},\sqrt{2/3}]$. Then it is straightforward to find a coordinate transformation that brings the metric \eqref{Kasner-k-original} given by Kasner exponents satisfying \eqref{eq:Kasnerrelsf}  to the form
\begin{equation}
\label{Kasner-ksf}
g = (-t)^{\frac{{K}^2-1}{2}} \big( - d t\otimes dt + dx\otimes dx \big) + (-t)^{1-{L}} dy\otimes dy +(-t)^{1+{L}} dz\otimes dz
\end{equation}
where
\begin{equation}
\label{eq:KasnerexpoKL}
 p_1 =({K}^2-1)/({K}^2+3), \quad
 p_2 =2(1-{L})/({K}^2+3),\quad
 p_3 =2(1+{L})/({K}^2+3),
\end{equation}
which we observe is  very similar to \eqref{Kasner-k} and \eqref{eq:Kasnerexpo}. We also notice that \eqref{eq:Kasnerrelsf} is equivalent to
\begin{equation}
  \label{eq:LSFL}
  K^2=L^2+\frac{A^2}{8}(3+K^2)^2.
\end{equation}
It follows in the vacuum case $A=0$ that $L=\pm K$ and \eqref{Kasner-ksf} reduces to \eqref{Kasner-k} as expected. Given any $A$ with $0<A^2\le 2/3$, the restriction $L^2\ge 0$ implies the following restriction for $K^2$ in \eqref{eq:LSFL}:
\begin{equation}
  \label{eq:LSFKrest}
  K^2\in \Bigl[\frac{4-3A^2-2\sqrt2\sqrt{2-3A^2}}{A^2}, \frac{4-3A^2+2\sqrt2\sqrt{2-3A^2}}{A^2}\Bigr],
\end{equation}
which degenerates to no restriction in the case $A=0$.
Given all this, we now define
\begin{equation}
    \label{eq:GammaDefsf}
    \Gamma_1=\frac {3\gamma-2-{K}^2(2-\gamma)}4,\quad
    \Gamma_2=\frac {3\gamma-5+2{L}+{K}^2(\gamma-1)}4,\quad
    \Gamma_3=\frac {3\gamma-5-2{L}+{K}^2(\gamma-1)}4,
  \end{equation}
  which reduces to \eqref{eq:GammaDef} for $A=0$. Observe that \eqref{eq:GammaaltN} holds for these quantities $\Gamma_1$, $\Gamma_2$, $\Gamma_3$ in \eqref{eq:GammaDefsf} for \emph{all} $A$ with $0\le A^2\le 2/3$ as a consequence of \eqref{eq:KasnerexpoKL}.

  Anticipating parts of the proofs of \Theoremref{thm:fluidresult} and \Theoremref{thm:fluidresult2}, it is a remarkable fact that the Euler equations take the same form \eqref{eq:EulerResc} -- \eqref{eq:GEulerPol}, whether $A=0$ or not, as long as the fluid exponents $\Gamma_1$, $\Gamma_2$ and $\Gamma_3$ are defined by \eqref{eq:GammaDefsf} as opposed to by \eqref{eq:GammaDef}. It is a further remarkable fact of the proofs that condition \eqref{eq:gammarestr} never needs to be invoked directly; all arguments in the proofs rely on condition \eqref{eq:Gammarestr} instead (which is equivalent to \eqref{eq:stablefluidrestr} as a consequence of \eqref{eq:GammaaltN} irrespective of the value of $A$). We conclude from this that all the results about the dynamics of fluids given in \Theoremref{thm:fluidresult} and \Theoremref{thm:fluidresult2} are valid  for fluids on Kasner-scalar field spacetimes for \emph{any} $A\in [-\sqrt{2/3},\sqrt{2/3}]$ provided the results are expressed in terms of the fluid exponents in \eqref{eq:GammaDefsf} and provided \eqref{eq:Gammarestr} holds (while \eqref{eq:gammarestr} can be ignored). We remark that also the expression \eqref{eq:fluidorthonormal} then turns out to hold when the fluid vector $V^\alpha$ is expressed in terms of the orthonormal frame \eqref{eq:KasnerONF} irrespective of the value of $A$. 

  In order to illustrate  this further, let us consider the isotropic case $A=\pm \sqrt{2/3}$ as an example. In this case, \eqref{eq:LSFKrest} implies that $K^2$ has to take the value $3$ and \eqref{eq:LSFL} implies that $L=0$.
\eqref{eq:GammaDefsf} then yields that $\Gamma_1=\Gamma_2=\Gamma_3=(3\gamma-4)/2$. Our discussion therefore implies that all our results about fluids in \Theoremref{thm:fluidresult} (and similarly in \Theoremref{thm:fluidresult2}) hold for arbitrary equation of state parameters $\gamma$ with $2>\gamma>4/3$. This is an interesting outcome since it shows that  radiation fluids given by $\gamma=4/3$ correspond to the \emph{borderline case of stability} for $A=\pm\sqrt{2/3}$ (while they are in the \emph{unstable regime} for $A=0$).
\end{rem}

\subsubsection{Proof of Theorem~\ref{thm:fluidresult}}
\label{sec:proofEulerSIVP}

The proof of Theorem \eqref{thm:fluidresult} involves four steps. The first step is, given a leading order term $U_*$, which can be thought of as an approximate solution, to transform the Euler equations \eqref{eq:AAA1}
into a symmetric hyperbolic Fuchsian system in accord with Definition \ref{def:symmhypFuchssystems}. The second step is to apply Theorem \ref{thm:mainresult} to obtain the existence and uniqueness of a solution to the Fuchsian system, which yields a corresponding solution to the Euler equations. This is a conditional existence result because it relies on the leading order term $U_*$ being sufficiently accurate. This leads to the third step where sufficiently accurate leading order terms are constructed. With a sufficiently accurate leading order term in hand, we are then able, in the final step, to obtain solutions to the Euler equations from the conditional existence result.

\bigskip
\noindent \underline{Step 1: Transformation to Fuchsian form}
\bigskip

\noindent We multiply \eqref{eq:AAA1}  through with $(-t)^{-({K}^2-1)/2-5\Gamma_1}  V_\lambda V^\lambda$
and express that system as a partial differential equation for the unknown
\begin{equation}
  \label{eq:EulerResc}
  U=T^{-1} V,
\end{equation}
where $T$ is given by \eqref{eq:EulerResc0}.
In this way, the Euler equations take the form
\begin{equation}
    \label{eq:Eulereqssymmhyp2N}
    B^0(U) \partial_t U+ B^1(U) \partial_x U+ B^2(t,U) \partial_y U+ B^3(t,U) \partial_z U=\frac 1t \Bc(U) U+G(t,U),
\end{equation}
where
\begin{align}
\label{eq:B0EulerPol}
B^0(v)&=
  \left(
\begin{array}{cccc}
 {P_0} v^0 & -{Q_0} v^1 & -{Q_0} v^2 & -{Q_0} v^3 \\
 -{Q_0} v^1 & {Q_1} v^0 & v^0 v^1 v^2 (3 \gamma -2) & v^0 v^1 v^3 (3 \gamma -2) \\
 -{Q_0} v^2 & v^0 v^1 v^2 (3 \gamma -2) & {Q_2} v^0 & v^0 v^2 v^3 (3 \gamma -2) \\
 -{Q_0} v^3 & v^0 v^1 v^3 (3 \gamma -2) & v^0 v^2 v^3 (3 \gamma -2) & {Q_3} v^0 \\
\end{array}
\right),\\
\label{eq:B1EulerPol}
B^1(v)&=\left(
\begin{array}{cccc}
 {Q_0} v^1 & -{Q_1} v^0 & -v^0 v^1 v^2 (3 \gamma -2) & -v^0 v^1 v^3 (3 \gamma -2) \\
 -{Q_1} v^0 & {P_1} v^1 & {Q_1} v^2 & {Q_1} v^3 \\
 -v^0 v^1 v^2 (3 \gamma -2) & {Q_1} v^2 & {Q_2} v^1 & v^1 v^2 v^3 (3 \gamma -2) \\
 -v^0 v^1 v^3 (3 \gamma -2) & {Q_1} v^3 & v^1 v^2 v^3 (3 \gamma -2) & {Q_3} v^1 \\
\end{array}
\right),\\
B^2(t,v)&=(-t)^{\Gamma_2-\Gamma_1}\left(
\begin{array}{cccc}
 {Q_0} v^2 & -v^0 v^1 v^2 (3 \gamma -2) & -{Q_2} v^0 & -v^0 v^2 v^3 (3 \gamma -2) \\
 -v^0 v^1 v^2 (3 \gamma -2) & {Q_1} v^2 & {Q_2} v^1 & v^1 v^2 v^3 (3 \gamma -2) \\
 -{Q_2} v^0 & {Q_2} v^1 & {P_2} v^2 & {Q_2} v^3 \\
 -v^0 v^2 v^3 (3 \gamma -2) & v^1 v^2 v^3 (3 \gamma -2) & {Q_2} v^3 & {Q_3} v^2 \\
\end{array}
\right),
\end{align}
\begin{align}
\label{eq:B3EulerPol}
B^3(t,v)&=(-t)^{\Gamma_3-\Gamma_1}\left(
\begin{array}{cccc}
 {Q_0} v^3 & -v^0 v^1 v^3 (3 \gamma -2) & -v^0 v^2 v^3 (3 \gamma -2) & -{Q_3} v^0 \\
 -v^0 v^1 v^3 (3 \gamma -2) & {Q_1} v^3 & v^1 v^2 v^3 (3 \gamma -2) & {Q_3} v^1 \\
 -v^0 v^2 v^3 (3 \gamma -2) & v^1 v^2 v^3 (3 \gamma -2) & {Q_2} v^3 & {Q_3} v^2 \\
 -{Q_3} v^0 & {Q_3} v^1 & {Q_3} v^2 & {P_3} v^3 \\
\end{array}
\right),
\end{align}
\begin{align}
\label{eq:BcEulerPol}
\Bc(v)&=r v^0\mathrm{diag}\,\left(0,\Gamma_1,\Gamma_2,\Gamma_3\right)
\intertext{and}
\label{eq:GEulerPol}
G(t,v)&=-r \frac {{\Gamma_1} (v^1)^2+{\Gamma_2} (v^2)^2+{\Gamma_3} (v^3)^2}t\left(1,0,0,0\right)^{\tr}
\end{align}
with $t<0$ and $v\in\Vbb$\footnote{Recall that $\Vbb$ is the trivial bundle $\Tbb^3\times \Rbb^4$.}.
Here, $r$, $P_0$, \ldots, $P_3$ and $Q_0$, \ldots, $Q_3$ are all quadratic polynomials  in the components\footnote{Observe that strictly speaking $v$ is an element of the trivial bundle $\Vbb$. Thinking of $v$ as a vector in $\Rbb^4$ is justified by our conventions above.} $v\in \Rbb^4$ with $(t,x,y,z)$-independent coefficients, which only depend on the parameter $\gamma$, and not, for example, on $K$. These polynomials are all chosen to be strictly positive whenever they are evaluated at $v=(v^0,0,0,0)^{\tr}$ with $v^0>0$, and in particular,
\begin{align}
  \label{eq:Eulerpol1}
  r(v^0,0,0,0)&=Q_1(v^0,0,0,0)=Q_2(v^0,0,0,0)=Q_3(v^0,0,0,0)=(\gamma-1)(v^0)^2,\\
  \label{eq:Eulerpol2}
  P_0(v^0,v^1,v^2,v^3)&=(v^0)^2+3(\gamma-1)((v^1)^2+(v^2)^2+(v^3)^2)\quad
  \Longrightarrow\quad P_0(v^0,0,0,0)=(v^0)^2.
\end{align}

To proceed with the transformation to Fuchsian form, we introduce a leading order term $U_*$, to be specified\footnote{A sufficient list of precise assumptions for $U_*$ and the related quantity $\Uh_*$ defined in \eqref{eq:EulerLOTRem} is given in Proposition~\ref{prop:EulerProp2}.}, and we formulate the Euler equations in terms of the remainder 
\begin{equation}\label{remainder}
u=U-U_*.
\end{equation}
We will also find it convenient at times to work with
a rescaled version of $U_*$, denoted by $\Uh_*$, that is defined via
\begin{equation}
  \label{eq:EulerLOTRem}
  U_*= \Th^{-1} \Uh_*
\end{equation}
where
\begin{equation}\label{Thdef}
\Th=\Th(t):=\mathrm{diag}\,\left(1,(-t)^{-\Gamma_1}, (-t)^{-\Gamma_2}, (-t)^{-\Gamma_3}\right)
\end{equation}
and
\begin{equation*}
    \Uh_*=(\Uh_*^0,\Uh_*^1,\Uh_*^2,\Uh_*^3)^{\tr}.
\end{equation*} 
Now, a straightforward calculation shows that the Euler equations \eqref{eq:Eulereqssymmhyp2N}
can be expressed in terms of the remainder \eqref{remainder}
as
\begin{equation}
    \label{eq:Eulereqssymmhyp3N}
    B^0(U_*,u) \partial_t u+ B^1(U_*,u) \partial_x u+ B^2(t, U_*,u) \partial_y u
+ B^3(t, U_*,u) \partial_z u=\frac 1t \Bc(U_*,u) u+F(t,W_*,u)
\end{equation}
where 
\begin{equation}
 W_*= \bigl(\Uh_*,(-t)^{1-q}\del{t}U_*,\del{x}\Uh_*,\del{y}\Uh_*,\del{z}\Uh_*\bigr), \quad q\geq 0,
 \label{W*def}
 \end{equation}
and  by a slight abuse of notation, we have set
\begin{align}
B^0(\Zt,v)&=B^0(\Zt+v),\label{Eul-B0} \\ 
B^1(\Zt,v)&= B^1(\Zt+v), \label{Eul-B1}\\
B^2(t,\Zt,v)&= B^2(t,\Zt+v),\label{Eul-B2} \\  
B^3(t,\Zt,v)&= B^3(t, \Zt+v),\label{Eul-B3} \\
\Bc(\Zt,v)&=\Bc(\Zt+v), \label{Eul-B4}
\end{align}
for $t\in [T_0,0)$ and $(\Zt,v)\in Z_1\oplus\Vbb$ with $Z_1=\Sigma\times\Rbb^4$. In the following, we label  components of $\Zt\in Z_1$ by  $(\Zt^0,\Zt^1,\Zt^2,\Zt^3)$.
The map
 $F$ can be expressed as
 \begin{equation}
 F(t,Z,v) = (-t)^{\lambda}\Ft(t,Z)
 +(-t)^{\lambda}F_0(t,Z,v) \label{eq:Eulereqssymmhyp3N2}
 \end{equation}
where $[T_0,0)\times Z_2\oplus\Vbb$ and 
 $Z_2$ is the trivial bundle $Z_1 \oplus Z_1 \oplus Z_1 \oplus Z_1 \oplus Z_1$ over $\Sigma$. We will label the components of $Z\in Z_2$ by
 \begin{equation}
 (Z_0,Z_1,Z_{21},Z_{22},Z_{23}). \label{Eul-Z}
 \end{equation}
With this notation, $\Ft$ and $F_0$ are then given by
 \begin{align}
 \Ft(t,Z)&=-(-t)^{-\lambda}\biggl(B^0(\Th(t)^{-1}Z_0) (-t)^{q-1}Z_1
+B^1(\Th(t)^{-1}Z_0) \Th(t)^{-1}Z_{21}
\notag \\
&\hspace{2.5cm} +B^2(t,\Th(t)^{-1}Z_0)\Th(t)^{-1}Z_{22}  +B^3(t,\Th(t)^{-1}Z_0)\Th(t)^{-1} Z_{23} \notag \\
&\hspace{5.5cm} -\frac 1t \Bc(\Th(t)^{-1}Z_0) \Th(t)^{-1}Z_0-G(\Th(t)^{-1}Z_0)\biggr) \label{Eul-Ft}
 \intertext{and}
F_0(t,Z,v) &=(-t)^{-\lambda} \biggl( -\bigl(B^0(\Th(t)^{-1}Z_0+v)-B^0(\Th(t)^{-1}Z_0)\bigr) (-t)^{q-1}Z_1 \notag -\bigl(B^1(\Th(t)^{-1}Z_0+v) \notag \\
&\qquad -B^1(\Th(t)^{-1}Z_0)\bigr) \Th(t)^{-1}Z_{21}
-\bigl(B^2(t,\Th(t)^{-1}Z_0+v)-B^2(t,\Th(t)^{-1}Z_0)\bigr) \Th(t)^{-1}Z_{22} \notag \\
&\qquad  -\bigl(B^3(t,\Th(t)^{-1}Z_0+v)-B^3(t,\Th(t)^{-1}Z_0)\bigr) \Th(t)^{-1}Z_{23} 
+\frac 1t\bigl(\Bc(\Th(t)^{-1}Z_0+v) \notag \\
&\hspace{2.0cm}\qquad -\Bc(\Th(t)^{-1}Z_0)\bigr) \Th(t)^{-1}Z_0+\bigl(G(\Th(t)^{-1}Z_0+v)-G(\Th(t)^{-1}Z_0)\bigr) \biggr). \label{Eul-F0}
\end{align}

The point of expressing the Euler equations this way  is, as will be verified in the following lemma, that \eqref{eq:Eulereqssymmhyp3N} is now in a form to which Theorem \ref{thm:mainresult}
applies.
Before stating the lemma, we define a family of open and bounded 
sets $\tilde{\Zc}_{\rc,\Rc}$, $0<\rc < \Rc$,  in $Z_1$ by
\begin{equation} \label{Zctdef}
    \tilde{\Zc}_{\rc,\Rc} = \bigl\{ \Zt\in Z_1\,\bigl|\, \Zt^0>\rc,\; |\Zt|< \Rc\,\bigr\}.
\end{equation}

\begin{lemma} \label{lem:Euler1}
Suppose ${K}$ and $\gamma$ satisfy \eqref{eq:gammarestr}, $\Gamma_1$, $\Gamma_2$ and
$\Gamma_3$ are defined by \eqref{eq:GammaDef}, $q>0$, $\mu>\Gamma_1$, $R>0$, $0<\rc<\Rc$,  $\gamma_1=2/(\rc^3(\gamma-1))$,
$\gamma_2=2\Rc^3$, $0<p\leq\min\{1-\Gamma_1+\Gamma_3,\mu,q,\Gamma_3\}$,  $\beta=0$ and $-1<\lambda\leq \min\{\mu+q-1,\mu+\Gamma_3-1\}$.
Then for $T_0<0$ close enough to zero, there exists
a constant $\theta>0$ such that
\eqref{eq:Eulereqssymmhyp3N} is a symmetric hyperbolic Fuchsian system according to \Defref{def:symmhypFuchssystems} 
for the above choices of constants, the open bounded subsets
sets $\Zc_1=\tilde{\Zc}_{\rc,\Rc}$ and $\Zc_2=\Zc_1\oplus B_R(Z_1) \oplus B_R(Z_1) \oplus B_R(Z_1) \oplus B_R(Z_1)$ of of $Z_1$ and $Z_2$, respectively, and
the maps \eqref{Eul-B0}-\eqref{Eul-B4}, \eqref{eq:Eulereqssymmhyp3N2}-\eqref{Eul-F0},
$\Bt^0(t,\Zt)=B^0(\Zt,0)$, $B_0^i(t,\Zt,v)=t^{1-p}B^i(t,\Zt,v)$ and
$B_1^i=0$.
\end{lemma}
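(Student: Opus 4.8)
The plan is to verify, one requirement at a time, that the rewritten Euler system \eqref{eq:Eulereqssymmhyp3N} together with the maps \eqref{Eul-B0}--\eqref{Eul-F0}, the choices $\Bt^0(t,\Zt)=B^0(\Zt,0)$, $B_0^i=|t|^{1-p}B^i$, $B_1^i=0$, and the stated constants and sets $\Zc_1=\tilde\Zc_{\rc,\Rc}$, $\Zc_2$, satisfies each of the four conditions of \Defref{def:symmhypFuchssystems}. Since every Euler coefficient \eqref{eq:B0EulerPol}--\eqref{eq:GEulerPol} is explicit -- a polynomial in the vector argument with $t$-independent matrix coefficients, apart from the single positive powers $(-t)^{\Gamma_2-\Gamma_1}$ and $(-t)^{\Gamma_3-\Gamma_1}$ in $B^2$ and $B^3$ -- this reduces to a bookkeeping exercise in powers of $(-t)$, which is why the details may be left to the reader; I will mainly indicate which hypothesis controls which term. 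Note that the only consequence of \eqref{eq:gammarestr} actually needed is the ordering \eqref{eq:Gammarestr}, i.e. $1>\Gamma_1\ge\Gamma_2\ge\Gamma_3>0$.

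For condition~(1): symmetry of $B^0$ and of $B^1,B^2,B^3$ (hence $[\sigma(B)]^{\tr}=\sigma(B)$) is visible in \eqref{eq:B0EulerPol}--\eqref{eq:B3EulerPol}, and the required $C^0_b\cap C^1$ regularity holds because $B^0$ and $\Bc$ carry no explicit $t$, $\Zc_1$ and $B_R(V)$ are bounded, and $|t|^\mu\le|T_0|^\mu$ is small once $T_0$ is close to $0$. For the bounds $\gamma_1^{-1}\,\mathrm{id}\le B^0\le\gamma_2\,\mathrm{id}$ I would first evaluate $B^0$ at $v=(v^0,0,0,0)$, where by \eqref{eq:Eulerpol1}--\eqref{eq:Eulerpol2} it is diagonal with eigenvalues $(v^0)^3$ and $(\gamma-1)(v^0)^3$ (the latter with multiplicity three); on $\tilde\Zc_{\rc,\Rc}$, shifted by $|t|^\mu B_R(V)$, continuity together with the factor $2$ built into $\gamma_1=2/(\rc^3(\gamma-1))$ and $\gamma_2=2\Rc^3$ then absorbs the off-diagonal entries and the $|t|^\mu v$ perturbation (after shrinking $\Rc$ and taking $T_0$ closer to $0$ if necessary). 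Finally $B^0(\Zt+|t|^\mu v)-B^0(\Zt)$ is a polynomial vanishing at $v=0$, hence $=\Ord(|t|^\mu v)=\Ord(v)$, which gives $\Bt^0=B^0(\cdot,0)$; and $\Bt_1=0=B_1$ makes $B_1-\Bt_1=\Ord(v)$ trivial.

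Conditions~(2) and (3) are a power count. For (3), $B^i=|t|^{-(1-p)}B_0^i$ with $B_0^i(t,\Zt,|t|^\mu v)=|t|^{1-p}B^i(t,\Zt+|t|^\mu v)$ bounded, since $B^1$ carries no $t$ while $|t|^{1-p}B^2$ and $|t|^{1-p}B^3$ carry $(-t)^{1-p+\Gamma_2-\Gamma_1}$ and $(-t)^{1-p+\Gamma_3-\Gamma_1}$, whose exponents are non-negative once $p\le1-\Gamma_1+\Gamma_3$ (recall $\Gamma_3\le\Gamma_2$). For (2), \eqref{eq:Eulereqssymmhyp3N2} already has the required shape, $\Ft$ of \eqref{Eul-Ft} is continuous on $[T_0,0)$ -- it may blow up as $t\nearrow0$, which is permitted since \Defref{def:symmhypFuchssystems} asks only for $C^0$, not $C^0_b$, of $\Ft$ -- and the one substantive point is that $F_0(t,Z,|t|^\mu v)\in C^0_b$ and equals $\Ord(v)$. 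Each term of \eqref{Eul-F0} is a polynomial difference of the form $B^0(\Th^{-1}Z_0+|t|^\mu v)-B^0(\Th^{-1}Z_0)=O(|t|^\mu v)$ (and similarly for $B^i$, $\Bc$, $G$) times a bounded factor, all multiplied by the prefactor $(-t)^{-\lambda}$; using that $\Th^{-1}$ supplies the positive powers $(-t)^{\Gamma_1},(-t)^{\Gamma_2},(-t)^{\Gamma_3}$, and that $\Bc$ is diagonal with vanishing first entry while $G$ has only a first component -- so that the $\tfrac1t$-terms pick up factors $(-t)^{\Gamma_3}$ and $(-t)^{2\Gamma_3}$ -- a term-by-term count gives the net exponent of $(-t)$ in $F_0(t,Z,|t|^\mu v)$ as $-\lambda+\mu+\min\{q,\Gamma_3\}-1$, which is non-negative precisely when $\lambda\le\min\{\mu+q-1,\mu+\Gamma_3-1\}$; factoring $v$ out of the same differences yields the $\Ord(v)$ claim.

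Condition~(4) is the step I expect to be the main obstacle, because \eqref{divBdef} has many terms and in each one must locate the factor whose decay beats the $\tfrac1t$. The starting simplification is $\del{t}B^0\equiv0$. Substituting $|t|^\mu v$, $|t|^\mu v'$ and using that $\dot z_1,z_1',v,v'$ lie in bounded balls, one finds that every remaining term of $\Div B$ is a bounded factor times one of $(-t)^{q-1}$, $(-t)^{\mu-1}$, $(-t)^{\Gamma_i-\Gamma_1}$, $(-t)^{\Gamma_3-1}$ or $(-t)^{2\Gamma_3-1}$, each of which is $\Ordc\bigl((-t)^{p-1}\bigr)$ precisely under $0<p\le\min\{1-\Gamma_1+\Gamma_3,\mu,q,\Gamma_3\}$, the four bounds $p\le q$, $p\le\mu$, $p\le\Gamma_3$ and $p\le1-\Gamma_1+\Gamma_3$ each being used exactly once. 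The decisive point is that the would-be $(-t)^{-1}$ contributions are tamed: the term $\tfrac1t\Bc(\cdot)\,|t|^\mu v$ is $(-t)^{\mu-1}$ (hence $\Ordc((-t)^{p-1})$ as $p\le\mu$), while the terms in $(-t)^\lambda\Ft$ built from $\tfrac1t\Bc$ and from $G$ are $(-t)^{\Gamma_3-1}$ and $(-t)^{2\Gamma_3-1}$, because the diagonal-with-zero structure of $\Bc$, the single-component structure of $G$, and the rescaling $\Th^{-1}$ jointly supply compensating factors $(-t)^{\Gamma_3}$ and $(-t)^{2\Gamma_3}$; thus no genuine $|t|^{-1}$ term survives and $\beta=0$ is admissible. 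Collecting the $(-t)^{p-1}$-bounded pieces gives \eqref{divBbnd.1} with $\beta=0$ and a suitable $\theta>0$, and the on-shell identity \eqref{divBonshell} holds by construction, completing the verification.
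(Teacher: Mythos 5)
Your proposal is correct and follows essentially the same route as the paper: verify the four conditions of Definition~\ref{def:symmhypFuchssystems} one at a time by a direct power count in $(-t)$, using that all the coefficient matrices \eqref{eq:B0EulerPol}--\eqref{eq:GEulerPol} are explicit polynomials with $t$-independence except for the $(-t)^{\Gamma_2-\Gamma_1}$, $(-t)^{\Gamma_3-\Gamma_1}$ prefactors on $B^2,B^3$; the identifications $\Bt^0(\Zt)=B^0(\Zt,0)$, the choice of $\gamma_1,\gamma_2$, and the deduction of $\lambda\le\min\{\mu+q-1,\mu+\Gamma_3-1\}$ from the $\Th^{-1}$ factors match the paper's proof term for term. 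The one small imprecision is your remark that all four bounds on $p$ are used \emph{independently} in condition~(4): since $\Gamma_1<1$ gives $\Gamma_3\le 1-\Gamma_1+\Gamma_3$, the inequality $p\le 1-\Gamma_1+\Gamma_3$ already follows from $p\le\Gamma_3$, which is why the paper lists only $p\le\min\{q,\mu,\Gamma_3\}$ in its verification of the divergence bound; this is a redundancy, not a gap.
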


\begin{proof}
Since $\mu>\Gamma_1$, it is clear from the
inequality \eqref{eq:Gammarestr}, the definitions \eqref{eq:B0EulerPol}-\eqref{eq:BcEulerPol} and \eqref{Eul-B0}-\eqref{Eul-B4}, and 
\eqref{eq:Eulerpol1}-\eqref{eq:Eulerpol2} 
that, for any $0<\rc<\Rc$ and $R>0$, 
the map 
\begin{equation*}
    (t,\Zt,z) \mapsto B^0(t,\Zt,|t|^{\mu}z)
\end{equation*}
is in $C^1\bigl([T_0,0),C^\infty\bigl(\Zc_1
\oplus B_R(\Vbb),L(\Vbb)\bigr)\bigr)$
while the maps
\begin{align*}
    (t,\Zt,v) &\longmapsto t^{1-p} B^1(t,\Zt,|t|^{\mu}v), \\
    (t,\Zt,v) &\longmapsto  t^{1-p} B^2(t,\Zt,|t|^{\mu}v), \\
    (t,\Zt,v) &\longmapsto t^{1-p} B^3(t,\Zt,|t|^{\mu}v), \\
    (t,\Zt,z) &\longmapsto \Bc(\Zt,|t|^{\mu}v), 
\end{align*}
are in $C^0_b\bigl([T_0,0),C^\infty\bigl(\Zc_1
\oplus B_R(\Vbb),L(\Vbb)\bigr)\bigr)$ provided 
that
\begin{equation*}
0<p\leq 1-\Gamma_1+\Gamma_3.
\end{equation*}

Next, we observe from the definition \eqref{Zctdef} of the set $\Zc_1=\tilde{\Zc}_{\rc,\Rsc}$, the formulas
\eqref{eq:B0EulerPol}, \eqref{eq:Eulerpol1}-\eqref{eq:Eulerpol2} and \eqref{Eul-B0}, and the fact that $\mu>0$,
that, by choosing $T_0<0$ close enough to $0$,
we can ensure that $B^0(t,\Zt,v)$ satisfies
\begin{equation*}
    \frac{1}{\gamma_1}\id\leq B^0(\Zt,|t|^\mu v)
    \leq \gamma_2 \id
\end{equation*}
for all $(t,\Zt,v) \in [T_0,0)\times \tilde{\Zc}_{\rc,\Rsc} \oplus B_R(\Vbb)$
where $\id$ is the identity map on $\Vbb$, 
\begin{equation*}
    \gamma_1 = \frac{2}{\rc^3 (\gamma-1)}
    \AND \gamma_2 = 2\Rc^3.
\end{equation*}
Setting
\begin{equation*}
    \Bt^0(\Zt)= B^0(\Zt,0),
\end{equation*}
it is also not difficult to verify that
$\Bt^0\in C^\infty\bigl(\Zc_1,L(\Vbb)\bigr)$ and that
\begin{equation*}
    B^0(\Zt,|t|^\mu v)-\Bt^0(\Zt)=\Ord(v)
\end{equation*}
for all $(t,\Zt,v)\in  [T_0,0)\times\Zc_1\oplus B_R(\Vbb)$.
Turning to the map $\Ft$ defined by \eqref{Eul-Ft}, it is clear from this definition along
with the formulas \eqref{eq:B0EulerPol}-\eqref{eq:Eulerpol2}, \eqref{Thdef} and \eqref{Eul-Z} that the map
    $(t,Z) \longmapsto \Ft(t,Z)$
is in $C^0\bigl([T_0,0),C^\infty(\Zc_2,\Vbb)\bigr)$.
In anticipation of the discussion below, it is useful to note by \eqref{Eul-Ft} that
    $(t,Z) \longmapsto \Ft(t,Z)$
is in $C_b^0\bigl([T_0,0),C^\infty(\Zc,\Rbb^4)\bigr)$ for any 
\begin{equation}
  \label{eq:lambdatilde}
  \lambda\le \min\{q-1,\Gamma_3-1\}.
\end{equation}
Now, considering the map $F_0$ defined by \eqref{Eul-F0},
we first observe from  \eqref{eq:Gammarestr}, \eqref{eq:B0EulerPol}-\eqref{eq:Eulerpol2}, \eqref{Thdef}, \eqref{Eul-Z} and 
the assumption $\mu>\Gamma_1>0$
that the maps
\begin{align*}  
 (t,Z,v)&\longmapsto |t|^{-(\mu+q-1)} \bigl(B^0(\Th^{-1}(t)Z_0,|t|^\mu v)-B^0(\Th^{-1}(t)Z_0,0)\bigr)(-t)^{q-1}Z_1,\\
   (t,Z,v)&\longmapsto |t|^{-\mu} \bigl(B^1(\Th^{-1}(t)Z_0,|t|^\mu v)-B^1(\Th^{-1}(t)Z_0,0)\bigr)\Th^{-1}(t)Z_{21}, \\  
    (t,Z,v)&\longmapsto |t|^{-(\mu-\Gamma_1+\Gamma_2)} \bigl(B^2(t,\Th^{-1}(t)Z_0,|t|^\mu v)-B^2(t,\Th^{-1}(t)Z_0,0)\bigr)\Th^{-1}(t)Z_{22}, \\ 
    (t,Z,v)&\longmapsto |t|^{-(\mu-\Gamma_1+\Gamma_3)} \bigl(B^3(t,\Th^{-1}(t)Z_0,|t|^\mu v)-B^3(t,\Th^{-1}(t)Z_0,0)\bigr)\Th^{-1}(t)Z_{23}, \\
    (t,Z,v)&\longmapsto |t|^{-(\mu-1+\Gamma_3)} \frac{1}{t}\bigl(\Bc(\Th^{-1}(t)Z_0,|t|^\mu v)-\Bc(\Th^{-1}(t)Z_0,0)\bigr)\Th^{-1}(t)Z_0, \\
    (t,Z,v)&\longmapsto |t|^{-(\mu-1+\Gamma_3)} \bigl(G(\Th^{-1}(t)Z_0,|t|^\mu v)-G(\Th^{-1}(t)Z_0,0)\bigr),
\end{align*}
are all in
$C^0_b\bigl([T_0,0), C^\infty(\Zc_2\oplus B_R(\Vbb),\Vbb)\bigr)$. From this,
\eqref{eq:Gammarestr}, \eqref{Eul-F0} and
the assumption $\mu>\Gamma_1$, we see immediately
that 
\begin{equation*}
    (t,Z,v)\longmapsto F_0(t,Z,|t|^\mu v)
\end{equation*}
will be in $C^0_b\bigl([T_0,0), C^\infty(\Zc\oplus B_R(\Vbb),\Vbb)\bigr)$
provided that $\lambda$ satisfies
\begin{equation*}
 \lambda \leq \min\bigl\{\mu+q-1,\mu+\Gamma_3-1\bigr\}.   
\end{equation*}

To complete the proof of the lemma, we see from \eqref{divBdef} that the map $\Div\! B$
is given by
\begin{align*}
    \Div\!B(t,\Zt,\dot{\Zt},\Zt',v,v') =& D_{\Zt} B^0(\Zt,v)|t|^{q-1}\dot{\Zt}+ D_v B^0(\Zt,v)(B^0(\Zt,v))^{-1}\biggl[-B^i(t,\Zt,v)v'_i \\
    &\qquad+\frac{1}{t}\Bc(\Zt,v)v+F(t,Z,v)\biggr]+D_{\Zt}B^i(t,\Zt,v)\Zt'_i+D_{v}B^i(t,\Zt,v)v'_i.
\end{align*}
From similar considerations as above, it is not difficult to verify from the above, in particular \eqref{eq:lambdatilde}, that  
\begin{equation*}
\Div\!B(t,\Zt,\dot{\Zt},\Zt',|t|^\mu v,|t|^\mu v')=\Ord(\theta |t|^{-(1-p)})
\end{equation*}
for some $\theta >0$ provided that 
\begin{equation*}
p\leq \min\{q,\mu,\Gamma_3\}.
\end{equation*}
\end{proof}

\bigskip
\noindent \underline{Step 2: Conditional existence and uniqueness}
\bigskip

\noindent Given Lemma \ref{lem:Euler1}, we can now invoke \Theoremref{thm:mainresult}, while observing Remark \ref{rem:Fuchsian-SIVP}, to
derive precise conditions \eqref{Uh*-reg}-\eqref{eq:EulerFTildecond} for the leading order term $U_*$. The following proposition states that if these conditions are satisfied, then the singular initial value problem for the Euler equations has a solution. In Step~3  below, see Lemma~\ref{lem:iterateLOT}, we then construct leading order terms for the Euler equations which we show in Step~4 to be consistent with these conditions.

\begin{proposition}
  \label{prop:EulerProp2}
Let ${K}$ and $\gamma$ satisfy \eqref{eq:gammarestr}, $R>0$, $\Gamma_1$, $\Gamma_2$ and
$\Gamma_3$ be as defined by \eqref{eq:GammaDef}, and
$k\in \Zbb_{\geq 3}$. Suppose there
exist constants $\mu>\Gamma_1$, $q>0$, $\rc_0>0$ and $\lambda$ with
\begin{equation}
  \label{eq:EulerlambdaChoice}
  \mu-1<\lambda\le \min\{q,\Gamma_3\}+\mu-1,
\end{equation}
such that 
\begin{equation} \label{Uh*-reg}
\Uh_*=(\Uh^0_*,\Uh^1_*,\Uh^2_*,\Uh^3_*) \in  C^0_b([T_0,0), H^{k+1}(\Sigma,Z_1))\cap C^1([T_0,0), H^{k}(\Sigma,Z_1))
\end{equation}
satisfies
\begin{equation}  \label{Uh*-lowbnd}
\Uh_*^0\geq \rc_0 \quad \text{in $[T_0,0]\times \Sigma$}, \quad
t^{1-q}\del{t} U_*(t) \in  C^0_b\bigl([T_0,0), H^{k}(\Sigma,Z_1)\bigr)
\end{equation}
and
\begin{equation} \label{eq:EulerFTildecond}
\Ft(t,W_*(t)) \in  C^0_b\bigl([T_0,0), H^{k}(\Sigma,\Vbb)\bigr),
\end{equation}
where $U_*$, $W_*$ and $\Ft$ are as defined by \eqref{eq:EulerLOTRem}, \eqref{W*def} and \eqref{Eul-Ft}, respectively.
Then 
there exists a solution 
\begin{equation}
\label{eq:symhypSIVPregNN}
u \in C^0_b\bigl([T_0,0),H^{k}(\Sigma,\Vbb)\bigr)\cap C^1\bigl([T_0,0),H^{k-1}(\Sigma,\Vbb)\bigr)\subset C^1([T_0,0)\times\Sigma,\Vbb)
\end{equation}
of \eqref{eq:Eulereqssymmhyp3N}.
Moreover, this solution satisfies
\begin{equation} \label{L^infty-propertyEuler}
\sup_{t\in [T_0,0)}\max\Bigl\{\bigl\| |t|^{-\mu}u(t)\bigr\|_{L^\infty(\Sigma)},\bigl\| |t|^{-\mu}\nabla u(t)\bigr\|_{L^\infty(\Sigma)}\Bigr\} < R
\end{equation}
and 
\begin{equation}
\label{eq:solution1estimateboundednessNNNNN}
\norm{u(t)}_{H^{k}(\Sigma)}\lesssim
|t|^{\lambda+1}
\end{equation}
for all $t\in [T_0,0)$, and it is unique 
within the $C^1([T_0,0)\times \Sigma,\Vbb)$ class of solutions satisfying
\eqref{L^infty-propertyEuler}.
\end{proposition}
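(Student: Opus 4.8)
The plan is to recognise \eqref{eq:Eulereqssymmhyp3N} as an instance of the abstract system \eqref{symivp.1} with prescribed coefficients $w_1=U_*$ and $w_2=W_*$ (see \eqref{W*def}), to invoke \Lemref{lem:Euler1} so as to certify that it is a symmetric hyperbolic Fuchsian system, and then to apply \Theoremref{thm:mainresult} --- in the sharper form of Remark~\ref{rem:Fuchsian-SIVP}, which lowers the regularity threshold to $k\in\Zbb_{\geq 3}$ precisely because the coefficient $B_1$ supplied by \Lemref{lem:Euler1} vanishes. First I would fix the structural constants. Since \eqref{eq:EulerlambdaChoice} gives $0<\lambda+1-\mu\leq\min\{q,\Gamma_3\}$, I can pick $p>0$ small enough that both $p<\lambda+1-\mu$ and $0<p\leq\min\{1-\Gamma_1+\Gamma_3,\mu,q,\Gamma_3\}$ hold, and then set $\alpha=(\lambda+1-\mu)/p-1>0$, so that the balance relation \eqref{eq:minimaldecayassumption} is met; I also set $\beta=0$, choose $\rc\in(0,\rc_0)$, and choose $\Rc$, $\Rsc$, $R$ large enough that the bounds on $U_*$, $\nabla U_*$, $|t|^{1-q}\del{t}U_*$ (and on $\Uh_*$, $\nabla\Uh_*$) furnished by \eqref{Uh*-reg}--\eqref{Uh*-lowbnd}, together with the uniform boundedness of $\Th^{-1}$ on $[T_0,0)$, place $U_*$ inside $\tilde{\Zc}_{\rc,\Rc}$ and make $W_*$ a section of $\Zc_2$ obeying the bound imposed on $w_1$, $w_2$ in \Theoremref{thm:mainresult}. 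With these choices \Lemref{lem:Euler1} applies and yields, for $T_0<0$ close enough to $0$, a constant $\theta>0$ for which \eqref{eq:Eulereqssymmhyp3N} is symmetric hyperbolic Fuchsian with $B_1=0$.

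It then remains to verify the hypotheses of \Theoremref{thm:mainresult} that are not already subsumed in ``being Fuchsian''. Since $B_1=0$ we have $\Bt_1=0$, hence $\mathtt{b}=0$ in \eqref{eq:defb}, so \eqref{eq:forwardposN} reduces to $2\eta>0$ and holds for every $\eta>0$. The smallness condition \eqref{eq:forwardsmallnessTh} follows from \eqref{eq:EulerFTildecond}: since $s\mapsto\Ft(s,W_*(s))$ is bounded in $H^k(\Sigma)$ on $[T_0,0)$, the integral $\int_{T_0}^0|s|^{p-1}\norm{\Ft(s,W_*(s))}_{H^k(\Sigma)}\,ds$ is $\lesssim|T_0|^p$, hence $<\delta$ once $T_0$ is taken sufficiently close to $0$. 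The substantive point is the eigenvalue inequality \eqref{eq:evassumptionforwardN}, $\Bc(t,z_1,|t|^\mu v)\leq(\mu-\eta)B^0(t,z_1,|t|^\mu v)$ on $[T_0,0)\times\tilde{\Zc}_{\rc,\Rc}\oplus B_R(\Vbb)$. Here I would exploit that, by \eqref{eq:BcEulerPol}, $\Bc$ is diagonal and that on the ``rest'' configuration $(z_1^0,0,0,0)$ it equals $(\gamma-1)(z_1^0)^3\,\mathrm{diag}(0,\Gamma_1,\Gamma_2,\Gamma_3)$ while $B^0$ equals $(z_1^0)^3\,\mathrm{diag}(1,\gamma-1,\gamma-1,\gamma-1)$ by \eqref{eq:B0EulerPol} and \eqref{eq:Eulerpol1}--\eqref{eq:Eulerpol2}, so that there $\Bc (B^0)^{-1}$ has eigenvalues $0,\Gamma_1,\Gamma_2,\Gamma_3$. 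Since the actual arguments differ from this configuration only by the $|t|^\mu$-damped quantity and by the anisotropic factors $(-t)^{\Gamma_i}$ hidden in $U_*=\Th^{-1}\Uh_*$, both of which are $O(|T_0|^{\min\{\mu,\Gamma_3\}})$, inequality \eqref{eq:evassumptionforwardN} follows after fixing any $\eta\in(0,\mu-\Gamma_1)$ --- available since $\mu>\Gamma_1$ and $\Gamma_1\geq\Gamma_2\geq\Gamma_3>0$ by \eqref{eq:Gammarestr} --- and shrinking $T_0$ accordingly; the positive-definiteness clause \eqref{B0pos} of \Defref{def:symmhypFuchssystems} gets re-checked in the same stroke. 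The regularity and range requirements on $w_1=U_*$ and $w_2=W_*$ are, when read through $\Th^{-1}$, exactly \eqref{Uh*-reg}--\eqref{eq:EulerFTildecond}, so all hypotheses of \Theoremref{thm:mainresult} are satisfied.

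Invoking \Theoremref{thm:mainresult} then produces a solution $u$ of \eqref{eq:Eulereqssymmhyp3N} with the regularity \eqref{eq:symhypSIVPregNN} and the bound \eqref{L^infty-property}, which is precisely \eqref{L^infty-propertyEuler}; moreover \eqref{eq:solution1estimateboundednessNNNN}, combined with $\norm{\Ft(s,W_*(s))}_{H^k(\Sigma)}\lesssim 1$ from \eqref{eq:EulerFTildecond}, gives $\norm{|t|^{-(\lambda+1-p)}u(t)}_{H^k(\Sigma)}\lesssim\int_t^0|s|^{p-1}\,ds\lesssim|t|^p$, i.e.\ $\norm{u(t)}_{H^k(\Sigma)}\lesssim|t|^{\lambda+1}$, which is \eqref{eq:solution1estimateboundednessNNNNN}; and the uniqueness clause of \Theoremref{thm:mainresult} within the class of $C^1([T_0,0)\times\Sigma,\Vbb)$ solutions satisfying \eqref{L^infty-property} transfers verbatim. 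The main obstacle is the simultaneous calibration of the structural parameters $\mu$, $p$, $\lambda$, $q$, $\eta$, $\rc$, $\Rc$, $R$, $\Rsc$, $\theta$ so that \emph{every} clause of \Defref{def:symmhypFuchssystems} and of \Theoremref{thm:mainresult} holds at once, together with extracting \eqref{eq:evassumptionforwardN} from the explicit polynomial matrices \eqref{eq:B0EulerPol}--\eqref{eq:BcEulerPol}: this is exactly where the hypothesis $\mu>\Gamma_1$ is needed, and where one must push $T_0$ toward $0$ so that the anisotropic factors $(-t)^{\Gamma_i}$ have damped out and $\Bc$ sits strictly below $\mu B^0$.
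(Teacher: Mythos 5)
Your proposal matches the paper's own proof in both structure and substance: identify $w_1=U_*$, $w_2=W_*$, invoke \Lemref{lem:Euler1} to get the Fuchsian structure with $B_1=0$, calibrate $p$ by $0<p<\min\{1-\Gamma_1+\Gamma_3,\mu,q,\Gamma_3,\lambda+1-\mu\}$, set $\alpha=(\lambda+1-p-\mu)/p>0$ so \eqref{eq:minimaldecayassumption} holds, note $\beta=\mathtt{b}=0$ so \eqref{eq:forwardposN} is automatic, verify \eqref{eq:forwardsmallnessTh} by taking $|T_0|$ small, verify \eqref{eq:evassumptionforwardN} via $\mu>\Gamma_1$ and the rest-configuration eigenvalues $0,\Gamma_1,\Gamma_2,\Gamma_3$ of $\Bc(B^0)^{-1}$, and then apply \Theoremref{thm:mainresult} with Remark~\ref{rem:Fuchsian-SIVP} and convert \eqref{eq:solution1estimateboundednessNNNN} into \eqref{eq:solution1estimateboundednessNNNNN} using \eqref{eq:EulerFTildecond}. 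Your treatment of the eigenvalue inequality is slightly more explicit than the paper's one-line assertion but arrives at the same conclusion for the same reason.
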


\begin{proof}
First we notice, since $\Gamma_3$, by \eqref{eq:Gammarestr}, and
$q$, by assumption, are both positive, that the
inequality for $\lambda$ in \eqref{eq:EulerlambdaChoice} is consistent. Moreover we know from \eqref{eq:Gammarestr} that $\Gamma_1>0$ and hence, $\mu>0$. It therefore follows that $\lambda$ satisfies the required inequality in \Lemref{lem:Euler1}.   
By \eqref{eq:Gammarestr} and \eqref{eq:EulerlambdaChoice}, we also have
$\min\{1-\Gamma_1+\Gamma_3,\mu,q,\Gamma_3,\lambda+1-\mu\}>0$.
So, we can choose $p$ to satisfy
\begin{equation}
  \label{eq:Eulerprestr}
0<p< \min\{1-\Gamma_1+\Gamma_3,\mu,q,\Gamma_3,\lambda+1-\mu\},
\end{equation}
which implies that $p$ is consistent with the inequality in \Lemref{lem:Euler1}.   
Consequently, we conclude by Lemma \ref{lem:Euler1} that the system \eqref{eq:Eulereqssymmhyp3N} is symmetric hyperbolic Fuchsian  for any choice of constants $\rc \in (0,\rc_0)$
and 
$\Rc>\rc_0$
provided that $T_0<0$ is chosen close enough to zero. 

Next, we set
\[\alpha = \frac{\lambda+1-p-\mu}{p}\]
in order to satisfy \eqref{eq:minimaldecayassumption}, and we note
that $\alpha>0$ due to our choice of $p$ above.
We see also by $\mu>\Gamma_3$, \eqref{eq:Gammarestr}, \eqref{eq:B0EulerPol}, \eqref{eq:BcEulerPol}, \eqref{eq:Eulerpol1} and \eqref{eq:Eulerpol2} that for every sufficiently small $\eta>0$, there exists a small $T_0<0$ so that \eqref{eq:evassumptionforwardN} holds. Because $\beta$ and $\mathtt{b}$ (defined in \eqref{eq:defb}) are zero as a consequence of \Lemref{lem:Euler1}, condition \eqref{eq:forwardposN} is also fulfilled. Finally we notice that the smallness condition \eqref{eq:forwardsmallnessTh} can be verified for any $\delta>0$ by choosing a sufficiently small $|T_0|$, as a consequence of \eqref{eq:EulerFTildecond} for any $p$ with \eqref{eq:Eulerprestr}.

Having verified that all the conditions for Theorem~\ref{thm:mainresult} are met, we conclude, with the help of Remark \ref{rem:Fuchsian-SIVP}, that 
there exists a unique solution 
\begin{equation*} 
u \in C^0_b\bigl([T_0,0),H^{k}(\Sigma,\Vbb)\bigr)\cap C^1\bigl([T_0,0),H^{k-1}(\Sigma,\Vbb)\bigr)\subset C^1([T_0,0)\times\Sigma,\Vbb)
\end{equation*}
of \eqref{eq:Eulereqssymmhyp3N} that is bounded by
\begin{equation} \label{Eul-bnd1}
\norm{|t|^{-(\lambda+1-p)}u(t)}_{H^{k}(\Sigma)}\leq C \int_t^0 |s|^{p-1}\norm{\Ft(s,W_*(s))}_{H^k(\Sigma)}\, ds, \quad T_0\leq t<0,
\end{equation}
and
\begin{equation}\label{Eul-bnd2}
\sup_{t\in [T_0,0)}\max\Bigl\{\bigl\| |t|^{-\mu}u(t)\bigr\|_{L^\infty(\Sigma)},\bigl\| |t|^{-\mu}\nabla u(t)\bigr\|_{L^\infty(\Sigma)}\Bigr\} < R.
\end{equation}
Moreover, this is the unique solution of \eqref{eq:Eulereqssymmhyp3N}
within the class $C^1([T_0,0)\times \Sigma,\Vbb)$ satisfying
\eqref{Eul-bnd2}.
Finally, given \eqref{eq:EulerFTildecond},
we note from \eqref{Eul-bnd1} that
\begin{equation*} 
\norm{|t|^{-(\lambda+1-p)}u(t)}_{H^{k}(\Sigma)}\lesssim |t|^p
\end{equation*}
for all $t\in [T_0,0)$. Multiplying
both sides of this inequality by $|t|^{\lambda+1-p}$
yields the estimate \eqref{eq:solution1estimateboundednessNNNNN}.
This completes the proof of \Propref{prop:EulerProp2}.
\end{proof}

\bigskip
\noindent \underline{Step 3: Constructing accurate leading order terms}
\bigskip

\noindent We now turn to constructing leading order terms $U_*$ that satisfy the conditions \eqref{Uh*-reg}-\eqref{eq:EulerFTildecond}. We do this by constructing a finite sequence $U_n$, $n\geq 0$, of solutions of the following approximate equations:
\begin{equation} \label{eq:EulereqsDecA}
    B^0_n \partial_t U_{n+1}
  +B^i_n \partial_i U_{n}
  +B^0_n \Th^{-1}\partial_t\Th (U_{n+1}-U_{n})
-\frac 1t \Bc_n U_{n}-G_n =0
\end{equation}
where 
\begin{gather}
  B^0_n=B^0(U_n),\quad B^1_n=B^1(U_n),\quad B^2_n=B^2(t,U_n), \label{BnGndef1} \\
  B^3_n=B^3(t,U_n), \quad
  \Bc_n=\Bc(U_n),\quad G_n=G(U_n), \label{BnGndef2}
\end{gather}
and $\Th$ is defined above by \eqref{Thdef}. The leading order term $U_*$ will then be identified with $U_n$ for some sufficiently large $n$. In order to explain how this is done, we note first that, by setting 
\begin{equation}
  \label{eq:askjdj3}
  W_n=\Th U_{n},
\end{equation}
the approximate equations \eqref{eq:EulereqsDecA} can be expressed more compactly as
\begin{equation} \label{eq:EulereqsDec}
  \partial_t W_{n+1}
  + \Bh^i_n\partial_i W_{n}
-\Gh_n=0
\end{equation}
where
\begin{equation}
      \Bh^i_n=\Th (B^0_n)^{-1} B^i_n\Th^{-1} \AND
  \Gh_n=\frac 1t\Th (B^0_n)^{-1}\Bc_n U_n+\partial_t\hat T U_n +\Th (B^0_n)^{-1} G_n. \label{BhnGhndef}
\end{equation}
A straightforward but lengthy calculation shows that
\begin{align}
  \label{eq:BhnGhndef1}
  \Bh^1_n&=\frac {|t|^{-\Gamma_1}}{S_0 {W_n^0}}\\
&\begin{pmatrix}
 0 & - {S_1} |t|^{2 {\Gamma_1}} {W_n^0} &  |t|^{2 ({\Gamma_1}+{\Gamma_2})} {W_n^0} {W_n^1} {W_n^2} \gamma  &  |t|^{2 ({\Gamma_1}+{\Gamma_3})} {W_n^0} {W_n^1} {W_n^3}  \gamma  \\
 - {S_0} {W_n^0} & 2  |t|^{2 {\Gamma_1}} {Y_1} {W_n^1} & {P_1}  |t|^{2 {\Gamma_2}} {W_n^2} & {P_1}  |t|^{2 {\Gamma_3}} {W_n^3}  \\
 0 & -2 {Q_1}  |t|^{2 {\Gamma_1}} {W_n^2} (\gamma -1) & {P_2}  |t|^{2 {\Gamma_1}} {W_n^1} & 2  |t|^{2 ({\Gamma_1}+{\Gamma_3})} {W_n^1} {W_n^2} {W_n^3}  (\gamma -1) \\
 0 & -2 {Q_1}  |t|^{2 {\Gamma_1}} {W_n^3}  (\gamma -1) & 2  |t|^{2 ({\Gamma_1}+{\Gamma_2})} {W_n^1} {W_n^2} {W_n^3}  (\gamma -1) & {P_3}  |t|^{2 {\Gamma_1}} {W_n^1}
\end{pmatrix}\notag\\
\Bh^2_n&=\frac {|t|^{-\Gamma_1}}{S_0 {W_n^0}}\\
&\begin{pmatrix}
 0 &  |t|^{2 ({\Gamma_1}+{\Gamma_2})} {W_n^0} {W_n^1} {W_n^2} \gamma  & - {S_2} |t|^{2 {\Gamma_2}} {W_n^0} &  |t|^{2 ({\Gamma_2}+{\Gamma_3})} {W_n^0} {W_n^2} {W_n^3}  \gamma  \\
 0 & {P_1}  |t|^{2 {\Gamma_2}} {W_n^2} & -2 {Q_2}  |t|^{2 {\Gamma_2}} {W_n^1} (\gamma -1) & 2  |t|^{2 ({\Gamma_2}+{\Gamma_3})} {W_n^1} {W_n^2} {W_n^3}  (\gamma -1) \\
 - {S_0} {W_n^0} & {P_2}  |t|^{2 {\Gamma_1}} {W_n^1} & 2  |t|^{2 {\Gamma_2}} {Y_2} {W_n^2} & {P_2}  |t|^{2 {\Gamma_3}} {W_n^3}  \\
 0 & 2  |t|^{2 ({\Gamma_1}+{\Gamma_2})} {W_n^1} {W_n^2} {W_n^3}  (\gamma -1) & -2 {Q_2}  |t|^{2 {\Gamma_2}} {W_n^3}  (\gamma -1) & {P_3}  |t|^{2 {\Gamma_2}} {W_n^2} 
\end{pmatrix}\notag\\
\Bh^3_n&=\frac {|t|^{-\Gamma_1}}{S_0 {W_n^0}}\\
&\begin{pmatrix}
 0 &  |t|^{2 ({\Gamma_1}+{\Gamma_3})} {W_n^0} {W_n^1} {W_n^3}  \gamma  &  |t|^{2 ({\Gamma_2}+{\Gamma_3})} {W_n^0} {W_n^2} {W_n^3}  \gamma  & - {S_3} |t|^{2 {\Gamma_3}} {W_n^0} \\
 0 & {P_1}  |t|^{2 {\Gamma_3}} {W_n^3}  & 2  |t|^{2 ({\Gamma_2}+{\Gamma_3})} {W_n^1} {W_n^2} {W_n^3}  (\gamma -1) & -2 {Q_3}  |t|^{2 {\Gamma_3}} {W_n^1} (\gamma -1) \\
 0 & 2  |t|^{2 ({\Gamma_1}+{\Gamma_3})} {W_n^1} {W_n^2} {W_n^3}  (\gamma -1) & {P_2}  |t|^{2 {\Gamma_3}} {W_n^3}  & -2 {Q_3}  |t|^{2 {\Gamma_3}} {W_n^2} (\gamma -1) \\
 - {S_0} {W_n^0} & {P_3}  |t|^{2 {\Gamma_1}} {W_n^1} & {P_3}  |t|^{2 {\Gamma_2}} {W_n^2} & 2  |t|^{2 {\Gamma_3}} {Y_3} {W_n^3}
\end{pmatrix}\notag
\end{align}
and
\begin{equation}
  \label{eq:BhnGhndefLast}
  \begin{split}
\Gh_n=&\frac {{\Gamma_1} (W_n^1)^2 |t|^{2
    {\Gamma_1}}+{\Gamma_2} (W_n^2)^2 |t|^{2 {\Gamma_2}}+{\Gamma_3}
  (W_n^3)^2 |t|^{2 {\Gamma_3}}}{t S_0}\\
&\qquad \qquad \qquad \qquad\times\Bigl(\gamma {W_n^0},2(\gamma-1){W_n^1}, 2(\gamma-1){W_n^2} 2(\gamma-1){W_n^3}\Bigr)^{\tr},
\end{split}
\end{equation}
where $Y_1$, $Y_2$, $Y_3$, $Q_1$, $Q_2$, $Q_3$, $P_1$, $P_2$, $P_3$, $S_0$ $S_1$, $S_2$ and $S_3$ 
are quadratic polynomials\footnote{Despite having the same labels, the polynomials here do not agree with the polynomials used to express \eqref{eq:B0EulerPol} -- \eqref{eq:GEulerPol}.}
in the components of $U_n=\Th^{-1}W_n$
with $(t,x,y,z)$-independent coefficients that only depend on $\gamma$. All these polynomials have been chosen to be strictly positive when evaluated at $U_n=(U_n^0,0,0,0)$ with $U_n^0>0$.

In the following lemma, we fix the starting value $U_0$ of the iteration sequence and solve the approximate equations
\eqref{eq:EulereqsDec} to obtain $U_1$, $U_2$, \ldots,
$U_\ell$ for any integer $\ell \geq 0$.

\begin{lemma}
  \label{lem:iterateLOT}
  Suppose ${K}$ and $\gamma$ satisfy \eqref{eq:gammarestr}, $\ell, k\in \Zbb_{\geq 1}$, $q\leq \min\{1-\Gamma_1,2\Gamma_3\}$,
  $v_*=(v_*^0, v_*^1, v_*^2, v_*^3)^{\tr}\in H^{k+\ell}(\Sigma,\Vbb)$ with $v_*^0>0$ in $\Sigma$, and let
\[U_0=(v_*^0, v_*^1 (-t)^{\Gamma_1}, v_*^2 (-t)^{\Gamma_2}, v_*^3 (-t)^{\Gamma_3})^{\tr}.\]
Then for $T_0<0$ close enough to zero,
there exists solutions
$U_0,U_1,\ldots,U_{\ell}$ of \eqref{eq:EulereqsDecA} with the properties:
\begin{align}
 &U_n \in C^0_b\bigl([T_0,0),H^{k+\ell-n}(\Sigma,\Vbb)\bigr),
 \label{induct.i} \\
 &(-t)^{-q}(\Th U_{n}-v_*) \in C^0_b([T_0,0), H^{k+\ell-n}(\Sigma,\Vbb)), \label{induct.ii} \\
 &(-t)^{-n q}\Th (U_{n}-U_{n-1})
 \in C^0_b([T_0,0), H^{k+\ell-n}(\Sigma,\Vbb)) \label{induct.iii}
 \intertext{and}
 &(-t)^{1-q}\partial_t (\Th U_{n}) \in C^0_b([T_0,0), H^{k+\ell-n}(\Sigma,\Vbb)). \label{induct.iv}
 \end{align}
\end{lemma}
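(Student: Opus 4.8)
The plan is to build $U_0,\dots,U_\ell$ by induction on $n$, working throughout with the rescaled unknowns $W_n=\Th U_n$ and the compact form \eqref{eq:EulereqsDec} of the approximate equations \eqref{eq:EulereqsDecA} (the two being equivalent under $W_n=\Th U_n$). The decisive structural point is that \eqref{eq:EulereqsDec} contains \emph{no} spatial derivative of the new iterate $W_{n+1}$: its right-hand side $R_n:=-\Bh^i_n\partial_iW_n+\Gh_n$ is entirely determined by the already-constructed $W_n$ and by $t$. Hence each step is merely the integration of an ODE valued in a Sobolev space, with no fixed-point argument required: I set $W_{n+1}(t)=v_*+\int_0^t R_n(s)\,ds$, the integration constant being chosen equal to the prescribed asymptotic data $v_*$ so as to keep the correct limit at $t=0$. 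This is well defined because, reading off the explicit $|t|$-powers in \eqref{eq:BhnGhndef1}--\eqref{eq:BhnGhndefLast} and using the inductive bounds below, $\|R_n(s)\|_{H^{k+\ell-n-1}}\lesssim |s|^{-\Gamma_1}+|s|^{2\Gamma_3-1}$, which is integrable down to $s=0$ since $\Gamma_1<1$ and $\Gamma_3>0$ by \eqref{eq:Gammarestr}. The loss of exactly one spatial derivative at each step is what forces the $\ell$ extra derivatives in the hypothesis $v_*\in H^{k+\ell}(\Sigma,\Vbb)$.

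For the base case $n=0$ one has $W_0=\Th U_0=v_*$, a $t$-independent element of $H^{k+\ell}(\Sigma,\Vbb)$, so \eqref{induct.i}, \eqref{induct.ii} (since $W_0-v_*=0$) and \eqref{induct.iv} (since $\partial_tW_0=0$) hold trivially and \eqref{induct.iii} is vacuous. Because $v_*^0>0$ on the compact $\Sigma$, there is $\rc_0>0$ with $v_*^0\ge 2\rc_0$; since $U_n=\Th^{-1}W_n\to(v_*^0,0,0,0)^{\tr}$ as $t\nearrow0$, after shrinking $|T_0|$ (finitely many times, as $n\le\ell$) every iterate $U_n$ built below stays in the region where $U_n^0\ge\rc_0$, where the structural polynomials $r,P_i,Q_i,S_i,Y_i$ are positive, and where $B^0(U_n)$ is invertible with uniformly bounded inverse; this also supplies the lower bound on the zeroth component of $\Uh_*$ needed in \Propref{prop:EulerProp2}.

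In the inductive step I assume $W_0,\dots,W_n$ satisfy \eqref{induct.i}--\eqref{induct.iv}. Property \eqref{induct.ii} gives $W_n=v_*+O((-t)^q)$, so $W_n$ and $\partial_iW_n$ are bounded in $C^0_b([T_0,0),H^{k+\ell-n})$ and $C^0_b([T_0,0),H^{k+\ell-n-1})$; composing the smooth maps in \eqref{eq:BhnGhndef1}--\eqref{eq:BhnGhndefLast} with $W_n$ and invoking the Moser, product and composition inequalities (which apply at all the Sobolev orders arising here) then gives the bound on $R_n$ stated above, hence $W_{n+1}\in C^0_b([T_0,0),H^{k+\ell-n-1})$ by integration, and \eqref{induct.i} follows via $U_{n+1}=\Th^{-1}W_{n+1}$. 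Integrating the bound on $R_n$ yields $W_{n+1}-v_*=O((-t)^{1-\Gamma_1})+O((-t)^{2\Gamma_3})=O((-t)^q)$ since $q\le\min\{1-\Gamma_1,2\Gamma_3\}$, which is \eqref{induct.ii}, and $(-t)^{1-q}\partial_tW_{n+1}=(-t)^{1-q}R_n=O((-t)^{1-q-\Gamma_1})+O((-t)^{2\Gamma_3-q})$ is bounded by the same restriction on $q$, giving \eqref{induct.iv}. For \eqref{induct.iii} one writes $W_{n+1}-W_n=\int_0^t(R_n-R_{n-1})\,ds$ (with the convention $R_{-1}=0$, $W_{-1}=v_*$ when $n=0$) and decomposes $R_n-R_{n-1}=-(\Bh^i_n-\Bh^i_{n-1})\partial_iW_n-\Bh^i_{n-1}\partial_i(W_n-W_{n-1})+(\Gh_n-\Gh_{n-1})$; here the crucial observation, read off directly from \eqref{eq:BhnGhndef1}, is that the only entries of $\Bh^i_n$ carrying the most singular factor $|t|^{-\Gamma_1}$ are $W_n$-independent constants, so they drop out of $\Bh^i_n-\Bh^i_{n-1}$. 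Combining this cancellation with the inductive rate $W_n-W_{n-1}=O((-t)^{nq})$, Lipschitz bounds on the coefficient maps, and the $|t|$-powers in $\Gh_n$ gives $\|R_n(t)-R_{n-1}(t)\|_{H^{k+\ell-n-1}}\lesssim|t|^{(n+1)q-1}$, whence $W_{n+1}-W_n=O((-t)^{(n+1)q})$, which is \eqref{induct.iii}.

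The main obstacle is precisely this last estimate: one must track how the cancellation of the $|t|^{-\Gamma_1}$ terms, the decay of $W_n-W_{n-1}$ and $\partial_i(W_n-W_{n-1})$, and the $|t|$-powers in $\Gh_n-\Gh_{n-1}$ combine so as to remain integrable at $t=0$ while improving the power from $nq$ to $(n+1)q$, and to check that all of this closes under the single hypothesis $q\le\min\{1-\Gamma_1,2\Gamma_3\}$ — exactly the condition making the various weighted bounds compatible. A secondary, routine point is keeping all $\ell+1$ iterates inside the open set on which the coefficient matrices behave well, handled by choosing $|T_0|$ small. Once \eqref{induct.i}--\eqref{induct.iv} are established for all $0\le n\le\ell$, the $U_n=\Th^{-1}W_n$ solve \eqref{eq:EulereqsDecA} by the equivalence noted above, completing the proof.
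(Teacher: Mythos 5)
Your proof follows essentially the same scheme as the paper's: pass to $W_n=\Th U_n$ so that \eqref{eq:EulereqsDec} is an ODE in $t$ with no spatial derivative on the new iterate, integrate from $t=0$ with integration constant $v_*$, and close the induction via the $|t|$-power bookkeeping governed by $q\le\min\{1-\Gamma_1,2\Gamma_3\}$, losing one spatial derivative per step. The base case, the integral formula, the bound $\|R_n\|\lesssim|t|^{-\Gamma_1}+|t|^{2\Gamma_3-1}$, and the decomposition of $R_n-R_{n-1}$ all match the paper's $\hat F_{m-1}$ and $\hat f_{m-1}$ up to relabelling.

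The one place where you diverge is the ``crucial observation'' that the entries of $\hat B^i_n$ carrying the factor $|t|^{-\Gamma_1}$ are $W_n$-independent, so they drop out of $\hat B^i_n-\hat B^i_{n-1}$. This observation is true as a matter of fact, but it is not needed to close the induction, and the paper does not invoke it. The difference $\hat B^i_n-\hat B^i_{n-1}$ is controlled by the Lipschitz constant of the map $W\mapsto\hat B^i(t,W)$ at fixed $t$, and the coarse bound for that constant, $O(|t|^{-\Gamma_1})$, is already sufficient: it gives the second term of $R_n-R_{n-1}$ the size $O(|t|^{nq-\Gamma_1})$, and $nq-\Gamma_1\ge (n+1)q-1$ precisely because $q\le 1-\Gamma_1$, which is exactly the hypothesis you already have. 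So the induction closes with the crude Lipschitz bound, and the cancellation buys no improvement to the exponents. Flagging it as the ``decisive'' point slightly misrepresents what drives the estimate (and could mislead a reader, since the $W_n$-dependent entries of $\hat B^i_n$ can still be singular, of order $|t|^{2\Gamma_3-\Gamma_1}$, when $\Gamma_1>2\Gamma_3$). Apart from this presentational issue the argument is correct and genuinely the same as the paper's.
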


\begin{proof}
  We proceed by induction.
  
\medskip
 
\noindent\underline{Base case:}
Given $v_*=(v_*^0, v_*^1, v_*^2, v_*^3)^{\tr}\in H^{k+\ell}(\Sigma,\Vbb)$, we note by \eqref{Thdef} that
\begin{equation*}
U_0=(v_*^0, v_*^1 (-t)^{\Gamma_1}, v_*^2 (-t)^{\Gamma_2}, v_*^3 (-t)^{\Gamma_3})^{\tr}
\end{equation*}
satisfies
\begin{equation}
  \label{eq:Eulerw0}
    \Th U_0-v_*=0.
\end{equation}
From these expressions, it is then clear that $U_0$
satisfies all asserted properties \eqref{induct.i}, \eqref{induct.ii} and \eqref{induct.iv} for $n=0$. Thanks to \eqref{eq:Eulerw0}, we notice that the base case of \eqref{induct.iii} for $n=1$ is equivalent to \eqref{induct.ii} for $n=1$.

\bigskip

\noindent\underline{Induction hypothesis:} 
We now assume that \eqref{induct.i}-\eqref{induct.iv}
hold for $n=0,1,\ldots,m-1$ for some integer $m$
satisfying $1\leq m\leq \ell-1$. For use
below, we observe by setting
 \begin{equation}
    \label{eq:EulerDefuhn}
    w_n=W_n-v_*=\Th U_n-v_* 
  \end{equation}
that we can write \eqref{eq:EulereqsDec} as
  \begin{equation}
    \label{w-evol-A}
    \partial_t w_{n+1}= \Fh_n :=   -\Bh^i_n\partial_i (v_*+w_{n})
    +\Gh_n,
  \end{equation}
and moreover, that $w_{n+1}-w_n$ satisfies
\begin{equation}
    \label{w-evol-B}
    \del{t}(w_{n+1}-w_n)= \fh_n :=  -\Bh^i_n\del{i}(w_n-w_{n-1})-(\Bh^i_n-\Bh^i_{n-1})\del{i}(v_*+w_{n-1})+\Gh_n-\Gh_{n-1}.
\end{equation}
\bigskip
  
\noindent \underline{Induction step:}
With the help of the calculus inequalities, in particular, the Sobolev, Product and Moser calculus inequalities, see  \cite[Ch.~13, \S 2 \&  3]{taylor2011},
we see from \eqref{eq:Gammarestr},  \eqref{eq:BhnGhndef1}--\eqref{eq:BhnGhndef1}, \eqref{w-evol-A}--\eqref{w-evol-B}, and the induction hypothesis that $\Fh_{m-1}$ can
be estimated by
\begin{align}
    \norm{\Fh_{m-1}}_{H^{k+\ell-{m}}(\Sigma)} & 
    \leq \norm{\Bh_{m-1}}_{H^{k+\ell-m}(\Sigma)}
    \norm{D(v_*+w_{m-1})}_{H^{k+\ell-m}(\Sigma)}
    + \norm{\Gh_{m-1}}_{H^{k+\ell-m}(\Sigma)} \notag \\
    &\leq  C\bigl(\norm{v_*+w_{m-1})}_{H^{k+\ell-m}(\Sigma)}\bigr)\bigl(|t|^{-\Gamma_1}\norm{v_*+w_{m-1}}_{H^{k+\ell-(m-1)}(\Sigma)}+|t|^{2\Gamma_3-1}\bigr) \notag \\
    &\lesssim |t|^{-\Gamma_1}+|t|^{2\Gamma_3-1}, 
    \label{Fh-est}
\end{align}
while a similar argument shows that and $\fh_{m-1}$ can
be estimated by
\begin{align}
\norm{\fh_{m-1}}_{H^{k+\ell-{m}}(\Sigma)} &\leq 
     \norm{\Bh_{m-1}}_{H^{k+\ell-m}(\Sigma)}
    \norm{D(w_{m-1}-w_{m-2})}_{H^{k+\ell-m}(\Sigma)}
   \notag \\
   &\qquad + \norm{\Bh_{m-1}-\Bh_{m-2}}_{H^{k+\ell-m}(\Sigma)}
    \norm{D(v_*+w_{m-1})}_{H^{k+\ell-m}(\Sigma)} \notag \\
   &\hspace{5.5cm}  + \norm{\Gh_{m-1}-\Gh_{m-2}}_{H^{k+\ell-m}(\Sigma)}\notag \\
 &
  \lesssim  \bigl(|t|^{-\Gamma_1}+|t|^{2\Gamma_3-1}\bigr) \norm{w_{m-1}-w_{m-2}}_{H^{k+\ell-(m-1)}(\Sigma)} \notag\\
  &\lesssim |t|^{(m-1)q-\Gamma_1}+|t|^{(m-1)q+2\Gamma_3-1}.\label{fh-est}
\end{align}
Using \eqref{Fh-est}, it follows, since
$\Gamma_1<1$ and $2\Gamma_3-1>-1$ by assumption, that we can solve \eqref{w-evol-A} for $n=m-1$ by setting
\begin{equation}
    w_{m}(t) = -\int_{t}^0 \Fh_{m-1}(s)\, ds \label{wm-sol}
\end{equation}
where $w_{m}$ and $\del{t}w_m$ are bounded by
\begin{align} 
    \norm{w_m(t)}_{H^{k+\ell-m}(\Sigma)} & \lesssim 
    |t|^{1-\Gamma_1}+|t|^{2\Gamma_3} \label{wm-bnd}
    \intertext{and}
    \norm{\del{t}w_m(t)}_{H^{k+\ell-m}(\Sigma)}  &\lesssim
    |t|^{-\Gamma_1}+|t|^{2\Gamma_3-1} \label{dtwm-bnd}
\end{align}
for all $t\in [T_0,0)$. But $q\le \min\{1-\Gamma_1,2\Gamma_3\}$, 
and so, we see immediately from \eqref{Thdef}, \eqref{wm-sol}, \eqref{wm-bnd} and \eqref{dtwm-bnd} that $U_m=\Th^{-1}(w_m+v_*)$ satisfies \eqref{induct.i}, \eqref{induct.ii}
and \eqref{induct.iv} for $m=n$. Similar arguments using \eqref{w-evol-B}, \eqref{fh-est}, and
the fact that $\lim_{t\nearrow 0}(w_m(t)-w_{m-1}(t))=0$ by 
\eqref{wm-bnd} and the induction hypothesis shows that $w_m(t)-w_{m-1}(t)$
can be expressed as
\begin{equation}
  \label{eq:EulerSIVPImpr}
    w_{m}(t)-w_{m-1}(t) = -\int_{t}^0 \fh_{m-1}(s)\, ds,
\end{equation}
and is, in turn, bounded by 
\begin{equation*}
\norm{w_m(t)-w_{m-1}(t)}_{H^{k+\ell-m}(\Sigma)}  \lesssim 
    |t|^{(m-1)q+1-\Gamma_1}+|t|^{(m-1)q+2\Gamma_3}.
\end{equation*}
But as $q\le \min\{1-\Gamma_1,2\Gamma_3\}$ and $\Th(U_m-U_{m-1})=w_m-w_{m-1}$, we see that \eqref{induct.iii} is satisfied for $n=m$, which completes the proof.
\end{proof}

\bigskip
\noindent \underline{Step 4: Existence and uniqueness}
\bigskip

\noindent Now, we are in the position to complete the proof of \Theoremref{thm:fluidresult}. Assume that $k\in \Zbb_{\geq 3}$,
${K}$ and $\gamma$ satisfy \eqref{eq:gammarestr}, $\Gamma_1$, $\Gamma_2$ and
$\Gamma_3$ are defined by \eqref{eq:GammaDef},
\begin{equation}
  \label{eq:Eulerchooseq}
  q=\min\{1-\Gamma_1,2\Gamma_3\},
\end{equation}
$\mu>\Gamma_1$, and
$v_*=(v_*^0, v_*^1, v_*^2,v_*^3)^{\tr}\in H^{k+\ell}(\Sigma,\Vbb)$ with $v_*^0>0$ in $\Sigma$.
Then for $\ell \in \Zbb_{\geq 2}$ and $T_0<0$ chosen sufficiently small, let $U_0,U_1,\ldots,U_{\ell-1}$
be the sequence of solutions to \eqref{eq:EulereqsDecA} from Lemma \ref{lem:iterateLOT}.  

Setting
\begin{equation} \label{U*set}
    U_*=U_{\ell-1}
\end{equation}
and letting, as above, $\Uh_*$ and $W_*$ be defined
in terms of $U_*$ by \eqref{eq:EulerLOTRem} and
\eqref{W*def}, respectively, we see by \eqref{Eul-Ft} and
\eqref{BnGndef1}--\eqref{BnGndef2} that
\begin{equation*}
    \Ft(t,W_*)= (-t)^{-\lambda}\Bigl(B^0_{\ell-1} \partial_t U_{\ell-1}
  +B^i_{\ell-1} \partial_i U_{\ell-1}-\frac 1t \Bc_{\ell-1} U_{\ell-1}-G_{\ell-1}\Bigr).
\end{equation*}
Using the definitions \eqref{eq:askjdj3} and \eqref{BhnGhndef} together with the evolution equation \eqref{eq:EulereqsDec}, it then is not difficult to verify via a short calculation that we can express $\Ft(t,W_*)$
as
\begin{equation*}
  \Ft(t,W_*)=   (-t)^{-\lambda}B^0_{\ell-1} \Th^{-1}\Bigl(
 \Bh^i_{\ell-1}\partial_i  (W_{{\ell-1}}-W_{\ell-2})
  +(\Bh^i_{\ell-1}- \Bh^i_{\ell-2})\partial_i W_{\ell-2}
+\Gh_{\ell-2}
-\Gh_{\ell-1}\Bigr).
\end{equation*}
Using this expression, we can, with the help of
\eqref{Thdef}, \eqref{eq:B0EulerPol}, \eqref{BnGndef1}, \eqref{eq:BhnGhndef1} -- \eqref{eq:BhnGhndefLast}, and
the calculus inequalities 
(i.e. Sobolev, Product and Moser calculus inequalities, see  \cite[Ch.~13, \S 2 \&  3]{taylor2011}), estimate
$\Ft(t,W_*)$ by
\begin{align*}
    \norm{\Ft(t,W_*(t))}_{H^k(\Sigma)}\leq& C\Bigl(\norm{v_*}_{H^k(\Sigma)},\norm{W_{\ell-1}(t)}_{H^{k+1}(\Sigma)},\norm{W_{\ell-2}(t)}_{H^{k+2}(\Sigma)}\Bigr) \notag\\
    &\times\bigl(|t|^{-\Gamma_1-\lambda}+|t|^{2\Gamma_3-1-\lambda}\bigr) \Bigl(\norm{W_{\ell-1}(t)-W_{\ell-2}(t)}_{H^{k+1}(\Sigma)}\Bigr).
\end{align*}
By Lemma \ref{lem:iterateLOT} and the above estimate, we have
\begin{equation*}
     \norm{\Ft (t,W_*(t))}_{H^k(\Sigma)}\lesssim 
    |t|^{-\Gamma_1-\lambda+(\ell-1)q}+|t|^{2\Gamma_3-1-\lambda+(\ell-1)q}.
\end{equation*}
Condition \eqref{eq:EulerFTildecond} is therefore satisfied provided
\[\lambda\le \min\{1-\Gamma_1,2\Gamma_3\}-1+(\ell-1)q.\]
Given our choice of $q$ above and the restriction for $\lambda$ and $\mu$ in \Propref{prop:EulerProp2}, we find 
\begin{equation}
  \label{eq:EulerFinalLambda}
  \mu-1<\lambda\le \min\{q+\mu,\Gamma_3+\mu,\ell q\}-1,
\end{equation}
and therefore
\begin{equation}
  \label{eq:Eulerellconstr}
  \ell>\frac{\mu}q.
\end{equation}

Due to the uniform bound \eqref{eq:EulerFTildecond}, the fact that $U_*=U_{\ell-1}$ where
$U_{\ell-1}$ satisfies \eqref{induct.ii} and \eqref{induct.iv}, and the definition $\Uh_*=\Th U_*$, we are now in a position, after choosing $T_0<0$ closer to zero if necessary, to apply Proposition \ref{prop:EulerProp2}. 
For any choice of constants $R>0$, this yields the existence
of a solution 
\begin{equation*}
u \in C^0_b\bigl([T_0,0),H^{k}(\Sigma,\Vbb)\bigr)\cap C^1\bigl([T_0,0),H^{k-1}(\Sigma,\Vbb)\bigr)\subset C^1([T_0,0)\times\Sigma,\Vbb)
\end{equation*}
of \eqref{eq:Eulereqssymmhyp3N} that is bounded by 
\begin{equation} \label{u-decay}
\norm{u(t)}_{H^k(\Sigma)} \lesssim |t|^{\lambda+1},  \quad T_0\leq t < 0,
\end{equation}
and is unique within the
$C^1([T_0,0)\times\Sigma,\Vbb)$ class of solutions satisfying \eqref{L^infty-propertyEuler}.

By \eqref{eq:EulerResc0}, \eqref{eq:EulerResc}, $U=U_*+u=U_{\ell-1}+u$ and the fact that $\lambda+1>\Gamma_1\ge\Gamma_2\ge\Gamma_3$ as a consequence of $\mu>\Gamma_1$, and \eqref{eq:Gammarestr} and \eqref {eq:EulerFinalLambda}, it follows that
\begin{equation*}
V=T U_{\ell-1}+T u \in C^0\bigl([T_0,0),H^k(\Sigma,\Vbb)\bigr)\cap C^1\bigl([T_0,0),H^{k-1}(\Sigma,\Vbb)\bigr)
\end{equation*}
determines a solution of the relativistic
Euler equations \eqref{eq:AAA1}. Now, according to \eqref{eq:askjdj3} and \eqref{eq:EulerDefuhn}, we have
\[\Th T^{-1} V-v_*=w_{\ell-1}+\Th u,\]
which we observe, in turn, implies, via \eqref{eq:Gammarestr} and \eqref{Thdef} and the estimates \eqref{induct.ii} and \eqref{u-decay}, 
that
\begin{equation*}
    \norm{\Th T^{-1} V-v_*}_{H^k(\Sigma)}\lesssim
    |t|^{\epsilon}
\end{equation*}
where
\begin{equation}
  \label{eq:Eulerepsgen}
     \epsilon =\min\{\lambda+1-\Gamma_1,q\}=\min\{\lambda+1-\Gamma_1,1-\Gamma_1,2\Gamma_3\},
   \end{equation}
see \eqref{eq:Eulerchooseq}.

Now, by choosing $\ell$ sufficiently large, \eqref{eq:Eulerellconstr} and \eqref{eq:EulerFinalLambda} allow us to choose $\mu$ and $\lambda$ arbitrarily large. This makes sense since the larger we choose $\ell$, the more accurately we expect the leading order term $U_{\ell-1}$ to describe the actual solution $U$ and therefore the faster the remainder $u$ to decay according to \eqref{u-decay}. However, whatever large values we choose for $\ell$, $\mu$ and $\lambda$, this does not allow us to obtain a better decay exponent $\epsilon$ than $\min\{1-\Gamma_1,2\Gamma_3\}$ in accordance with \eqref{eq:Eulerepsgen}. Moreover, the larger we choose $\ell$ the more orders of differentiability we lose, and the larger we choose $\mu$ the weaker the uniqueness statement \eqref{L^infty-propertyEuler} becomes. In order to optimise differentiability and uniqueness, we obtain the result stated in \Theoremref{thm:fluidresult} as follows. First, we select any $\ell$ with
\begin{equation}
  \label{eq:Eulerellcond}
  \ell>\frac{\Gamma_1}q
\end{equation}
with $q$ given by \eqref{eq:Eulerchooseq}, and we make the specific choice
\[\lambda=\min\{1,\Gamma_1+\Gamma_3,\ell q\}-1=:\mu_0-1.\]
If we then select any $\mu$ with
\[\mu\in (\Gamma_1,\mu_0),\]
where we notice that $\mu_0>\Gamma_1$ by \eqref{eq:Gammarestr} and \eqref{eq:Eulerellcond}, we see that all the conditions $\mu>\Gamma_1$, \eqref{eq:EulerFinalLambda} and \eqref{eq:Eulerellconstr} are satisfied. Given \eqref{eq:Eulerepsgen} and our choice for $\lambda$, we find that
\begin{equation}
  \label{eq:Eulerepsgen2}
     \epsilon =\min\{\ell q-\Gamma_1,1-\Gamma_1,\Gamma_3\}.
   \end{equation}
The estimate holds for every $\ell$ consistent with \eqref{eq:Eulerellcond} and we have $\epsilon>0$. 
The largest value for $\epsilon$, which we can achieve by choosing $\ell$ sufficiently large, is therefore $\min\{1-\Gamma_1,\Gamma_3\}$. In particular, this establishes \eqref{eq:Eulerdecay}.

Having established the existence of this solution, which we can write in the form $U=U_{\ell-1}+u$, consider now any other classical solution $\Ut$ of \eqref{eq:Eulereqssymmhyp2N} such that $U$ and $\Ut$ satisfy \eqref{L^infty-propertyEulerAppl}, where we recall that $V$ and $\Vt$ are defined in terms of $U$ and $\Ut$ by  $U=T^{-1}V$ and $\Ut=T^{-1}\Vt$, respectively.
It follows that $u=U-U_{\ell-1}$ and $\tilde u=\tilde U-U_{\ell-1}$ are both solutions of the same symmetric hyperbolic Fuchsian system \eqref{eq:Eulereqssymmhyp3N} with $U_*=U_{\ell-1}$. According to the uniqueness result of \Propref{prop:EulerProp2}, it follows that $\ut=u$ and hence $U=\Ut$ if the bound \eqref{L^infty-propertyEuler} holds for $\ut$ for some $\mu\in (\Gamma_1,\mu_0)$. Expressing $\ut$ as
$\ut=u+(\Ut-U)$, it then follows from \eqref{L^infty-propertyEulerAppl}, the definitions  $U=T^{-1}V$ and $\Ut=T^{-1}\Vt$, and the bound \eqref{L^infty-propertyEuler} satisfied by $u$ (recall that $\lambda+1=\mu_0$) that $\ut$ satisfies the required bound for uniqueness. We therefore conclude that $u=\ut$ and the proof of Theorem \ref{thm:fluidresult} is complete.

\subsection{Stability}
\label{sec:EulerPert}
Having established the existence of a class of solutions of the Euler equations with the asymptotics given in \Theoremref{thm:fluidresult}, we now turn to establishing
the nonlinear stability of these solutions. The precise sense in which these solutions are stable is given below in \Theoremref{thm:fluidresult2}. The proof of this theorem is carried out in Section \ref{sec:Eulerproofstability}.

\begin{thm}[Stability of fluids on Kasner spacetimes]
  \label{thm:fluidresult2}
  Suppose ${K}$ and $\gamma$ satisfy \eqref{eq:gammarestr}, $\Gamma_1$, $\Gamma_2$ and
$\Gamma_3$ are defined by \eqref{eq:GammaDef}, and $k$, $\ell$ and $\ell_0$ are integers satisfying $k\ge 3$,  $\ell>\Gamma_1/q$ and 
\begin{equation}
  \label{eq:EulerCPDefell0}
  \ell_0\ge \ell+2,\quad \ell_0>\ell+\frac 32+\frac{\Gamma_1}{2\Gamma_3},
\end{equation}
respectively, where $q$ is defined in \eqref{eq:Eulerchooseq}. 
Further, for given
 {asymptotic data}
$v_*=(v_*^0,\ldots,v_*^3)^{\tr}\in H^{k+\ell_0+\ell}(\Sigma)$ with $v_*^0>0$,
let $V_S$ be the solution of the singular initial value problem of the Euler equations \eqref{eq:AAA1} with equation of state \eqref{eq:EulerEOS} defined on the Kasner spacetime given by \eqref{Kasner-k} on a time interval $[\Tt_0,0)$ asserted by \Theoremref{thm:fluidresult}.

\medskip

\noindent Then there exists $\delta>0$ such that for all sufficiently small $T_0\in [\Tt_0,0)$ and for any $V_0\in H^{k+\ell_0}(\Sigma)$ with $\norm{V_0}_{H^{k+\ell_0}(\Sigma)}\le \delta$, the Cauchy problem of
the Euler equations \eqref{eq:AAA1} with equation of state \eqref{eq:EulerEOS} defined on the Kasner spacetime given by \eqref{Kasner-k} with Cauchy data $V_{C,0}=V_S(T_0)+\Th^{-1}(T_0)T(T_0)V_0$ imposed at $t=T_0$
has a unique solution $V_C(t)=T(t) U_C(t)$ with
\begin{equation*}
U_C \in C_b^0\bigl([T_0,0),H^{k+\ell_0}(\Sigma)\bigr)\cap C^1\bigl([T_0,0),H^{k+\ell_0-1}(\Sigma)\bigr),
\end{equation*}
where $T(t)$ and $\Th(t)$ are defined in \eqref{eq:EulerResc0} and \eqref{Thdef}, respectively. Moreover:
\begin{enumerate}
\item The limits
$\lim_{t\nearrow 0} U_C^0$ in $H^{k+\ell_0-1}(\Sigma)$, $    \lim_{t\nearrow 0}|t|^{-\Gamma_1} U_C^1$ in $H^{k+\ell}(\Sigma)$, $    \lim_{t\nearrow 0}|t|^{-\Gamma_2} U_C^2$ in $H^{k+\ell}(\Sigma)$ and $    \lim_{t\nearrow 0}|t|^{-\Gamma_3} U_C^3$ in $H^{k+\ell_0-2}(\Sigma)$,
which we denote by
$W_C^0(0)$, $W_C^1(0)$, $W_C^2(0)$, and $W_C^3(0)$, respectively, exist, and for any $\sigma>0$, the estimates
\begin{gather}
  \label{eq:EulerPertSecLimit1}
  \norm{(-t)^{-\Gamma_1} V_C^0(t)-W_C^0(0)}_{H^{k+\ell_0-1}(\Sigma)}\lesssim |t|^{1-\Gamma_1+\Gamma_3}+|t|^{2(\Gamma_3-\sigma)}
  \intertext{and}
\label{eq:EulerPertSecLimit2}
\begin{split}
  &\norm{(-t)^{-2\Gamma_1} V_C^1(t)-W_C^1(0)}_{H^{k+\ell}(\Sigma)}
  +\norm{(-t)^{-2\Gamma_2} V_C^2(t)-W_C^2(0)}_{H^{k+\ell}(\Sigma)}\\
  &+\norm{(-t)^{-2\Gamma_3} V_C^3(t)-W_C^3(0)}_{H^{k+\ell_0-2}(\Sigma)}
  \lesssim |t|^{1-\Gamma_1}+|t|^{2(\Gamma_3-\sigma)-2\sigma}
\end{split}
\end{gather}
hold for all $t\in [T_0,0)$. 
\item The solution $V_C$ agrees with the solution $\Vt_S$ of the singular initial value problem of the Euler equations \eqref{eq:AAA1} with equation of state \eqref{eq:EulerEOS} asserted by \Theoremref{thm:fluidresult} for the asymptotic data $\tilde v_*=(W^0_C(0), W^1_C(0), W^2_C(0), W^3_C(0))^{\tr}$.
\item The asymptotic data $v_*$ and $\tilde v_*=(W^0_C(0), W^1_C(0), W^2_C(0), W^3_C(0))^{\tr}$  are close in the sense
  \begin{equation}
  \label{eq:EulerCPLimContUFull}
  \norm{\tilde v_*^0-v_*^0}_{H^{k+\ell_0-1}(\Sigma)}\lesssim \norm{V_0}_{H^{k+\ell_0}(\Sigma)}+|T_0|^{\epsilon}
\end{equation}
with $\epsilon$ defined in Theorem~\ref{thm:fluidresult}, and,
\begin{equation}
  \label{eq:stabilityclosenessFull}
  \begin{split}
  \norm{\tilde v_*^j-v_*^j}_{H^{k+\ell}(\Sigma)}\lesssim &\norm{V_0}_{H^{k+\ell_0}(\Sigma)}\\
  &+
  |T_0|^{1-\Gamma_1}+|T_0|^{2(\Gamma_3-\sigma)-2\sigma}
  +|T_0|^{2 (\ell_0-\ell-1) (\Gamma_3-\sigma)-(\Gamma_1-\Gamma_3)-\sigma},
\end{split}
\end{equation}
for each $i=1,2,3$.
\end{enumerate}
\end{thm}

Before proceeding with the proof of this theorem, we briefly discuss its consequences and make a few remarks.
First, given a solution of the singular initial value problem $V_S$ from \Theoremref{thm:fluidresult} that is determined by
the asymptotic data $(v_*^0, v_*^1, v_*^2, v_*^3)^{\tr}$, \Theoremref{thm:fluidresult2} yields an open family of
perturbations of $V_S$ that are obtained by solving the regular Cauchy problem from the initial time $t=T_0$ towards $t=0$ with Cauchy data perturbed around the value $V_S(T_0)$ of the solution of the singular initial value problem at $t=T_0$. Furthermore, \Theoremref{thm:fluidresult2} guarantees, for a sufficiently small perturbation of $V_S(T_0)$, that the resulting solution of the Cauchy problem extends all the way to $t=0$ (global existence) and that the components of the fluid vector field, when rescaled with the appropriate  powers of $t$, converge as $t\nearrow 0$ to the asymptotic data $(W^0_C(0), W^1_C(0), W^2_C(0), W^3_C(0))^{\tr}$. It is important to note that the asymptotic data generated this way will, in general, be different from the asymptotic data $(v_*^0, v_*^1, v_*^2, v_*^3)^{\tr}$ that determines the singular solution $V_S$. However, it is a consequence of Theorem \ref{thm:fluidresult2} that the perturbed solution will agree with the solution to the singular initial value problem that is generated from the asymptotic data $(W^0_C(0), W^1_C(0), W^2_C(0), W^3_C(0))^{\tr}$ via \Theoremref{thm:fluidresult}. This leads to the important conclusion that perturbations of solutions of the singular initial value problem will again be solutions of the singular initial value problem.

Next, we observe that in the unperturbed case, that is, $V_0=0$ in \Theoremref{thm:fluidresult2}, we have $V_C=V_S$ and $(v_*^0, v_*^1, v_*^2, v_*^3)^{\tr}=(W^0_C(0), W^1_C(0), W^2_C(0), W^3_C(0))^{\tr}$.
Comparing \eqref{eq:Eulerdecay}, \eqref{eq:EulerPertSecLimit1} and \eqref{eq:EulerPertSecLimit2} with the given value of $\epsilon$, it is interesting to note that \Theoremref{thm:fluidresult2} yields larger decay exponents compared to  \Theoremref{thm:fluidresult}. The reason for this difference is due to the regularity requirements which are higher for  \Theoremref{thm:fluidresult2}. 

Let us comment on part (3) of the theorem which establishes the \emph{closeness} of the \emph{given} asymptotic data $v_*$ (which determine $V_S$) and the \emph{perturbed} asymptotic data $\tilde v_*$ obtained as the limit of the perturbed solution $V_C$ observing that all powers of $|T_0$| in \eqref{eq:EulerCPLimContUFull} and \eqref{eq:stabilityclosenessFull} are positive by virtue of the hypothesis. We expect however that all terms involving powers of $|T_0|$ on the right sides of these estimates can in fact be removed by performing a more detailed analysis especially of $V_S$ and the nonlinear terms in the Euler equations (see the corresponding remark after \Propref{prop:EulerSolveCPCor} and, especially, the suboptimal treatment of nonlinear terms in the step from \eqref{eq:suboptimal1} to \eqref{eq:suboptimal2} which we have performed here for brevity). As a consequence of this we cannot conclude here that the map $V_0\mapsto \tilde v_*$ given by the theorem is continuous from the ball of radius $\delta$ in $H^{k+\ell_0}(\Sigma)$ to $H^{k+\ell}(\Sigma)$. Even though we think that establishing this notion of continuity would be possible with our techniques, we think that it would only be interesting if we could pair it with a corresponding notion of continuity for the forward map given by \Theoremref{thm:fluidresult} to show that it is a homeomorphism in a $C^\infty$-topology as pioneered by Ringstr\"om \cite{ringstrom2017,ringstrom2021,ringstrom2021a}. Conclusions regarding the $C^\infty$-setting can however not be drawn from the current versions of Theorems~\ref{thm:mainresult}~and~\ref{thm:Fuchsian-IVP}, but we expect that it would not be difficult to modify the proofs of these theorems so that such conclusions could be drawn. 
  
We conclude our discussion of \Theoremref{thm:fluidresult2}  with two additional comments. First, Remark~\ref{rem:KSF} also applies to \Theoremref{thm:fluidresult2}, and consequently this stability result automatically extends to the more general case of fluids on Kasner-scalar field spacetimes. Second, the stability of Euler fluids near Kasner singularities in the sense of \Theoremref{thm:fluidresult2} was outstanding even in the Gowdy symmetric setting; see  \cite{beyer2019a} and the related numerical investigations from \cite{beyer2020b} for details.

\subsubsection{Proof of \Theoremref{thm:fluidresult2}}
\label{sec:Eulerproofstability}
The proof of \Theoremref{thm:fluidresult2} involves three main steps. We begin by fixing  {asymptotic data}\footnote{The stronger regularity requirements for $v_*$ in \Theoremref{thm:fluidresult2} will only become relevant in Step~2 of this proof.} $v_*=(v_*^0,\ldots,v_*^3)^{\tr}\in H^{k+\ell}(\Sigma)$ with $v_*^0>0$ and $\ell$ as defined in \Theoremref{thm:fluidresult}. We then label the resulting solution to the singular the value problem of the Euler equations that is generated by this asymptotic data via  \Theoremref{thm:fluidresult} by $V_S$. 

\bigskip
\noindent \underline{Step 1: Global existence and preliminary estimates of small data perturbations of $V_S$}
\bigskip

\noindent Letting $V_C$ denote the solution of the Cauchy problem for the Euler equations \eqref{eq:AAA1} with initial
data at $T_0<0$ determined by
\begin{equation} \label{VC@T0}
 V_C(T_0)=V_{C,0}:=V_S(T_0)+V_0,
\end{equation} 
where $V_0$ is to be considered as the perturbed initial data that measures the deviation from the solution $V_S(T_0)$ at $t=T_0>0$,
we set \begin{equation} \label{VC2UC}
    U_C=T^{-1}V_C,
\end{equation} 
where $T=T(t)$ is defined by \eqref{eq:EulerResc0},
and we recall that $U_C$ satisfies the system \eqref{eq:Eulereqssymmhyp2N} -- \eqref{eq:GEulerPol}. To proceed, we subtract off from $U_C$ the first component
of the asymptotic data $v_*$ by setting\footnote{Observe carefully the variable $u$ defined here is in general different from the variable $u$ introduced in \Sectionref{sec:proofEulerSIVP} and \eqref{eq:Eulereqssymmhyp3N}.}
\begin{equation}
  \label{eq:defvtimes}
  u:=U_C-v_{\times},\quad v_{\times}:=(v_*^0,0,0,0)^{\tr}.
\end{equation}
A straightforward calculation then shows that the system \eqref{eq:Eulereqssymmhyp2N} -- \eqref{eq:GEulerPol} is given in terms of $u$ by
\begin{align}
    B^0(v_{\times}+u) \partial_tu
    +B^1(v_{\times}&+u) \partial_xu
   +B^2(t,v_{\times}+u) \partial_yu
    +B^3(t,v_{\times}+u) \partial_zu
    =\frac 1t \Bc(v_{\times}+u) u \notag\\
    &-B^1(v_{\times}+u) \partial_x v_{\times}
    -B^2(t,v_{\times}+u) \partial_y v_{\times}
    -B^3(t,v_{\times}+u) \partial_z v_{\times}+G(t,u),\label{eq:Eulereqssymmhyp2NCP}
\end{align}
where in deriving this we have exploited the fact that $v_{\times}$, defined in  \eqref{eq:defvtimes}, does not depend on $t$, $\Bc(v_{\times}+u) v_{\times}=0$ as a consequence of \eqref{eq:BcEulerPol}, and $G(v_{\times}+u)=G(u)$ thanks to \eqref{eq:GEulerPol}.

Next, we choose\footnote{The stronger regularity requirements for $V_0$ in \Theoremref{thm:fluidresult2} will only become relevant in Step~2 of this proof.} $V_0\in H^{k}(\Sigma,\Vbb)$, and we observe that the Cauchy data \eqref{VC@T0} for the Euler equations translate into the Cauchy data
\begin{equation}
  \label{eq:EulerPertuData}
 u(T_0)=u_0:=T(T_0)^{-1} V_S(T_0)-v_\times+\Th(T_0)^{-1} V_0
\end{equation}
for the system \eqref{eq:Eulereqssymmhyp2NCP}. Recall that $T(t)$ and $\Th(t)$ are defined in \eqref{eq:EulerResc0} and \eqref{Thdef}, respectively.
Notice here that it is a consequence of \eqref{eq:Gammarestr}, \eqref{VC2UC}, and \eqref{eq:defvtimes}, and \Theoremref{thm:fluidresult}, in particular \eqref{eq:Eulerdecay} for $V=V_S$, that \eqref{eq:EulerPertuData} implies
\begin{equation}
  \label{eq:EulerPertuDataEst}
  \norm{u_0}_{H^{k}(\Sigma)}\lesssim  |T_0|^{\epsilon}+\norm{V_0}_{H^{k}(\Sigma)},
\end{equation}
where the implicit constant depends monotonically on $T_0$ and on the choice of asymptotic data $v_*$ and therefore on $V_S(T_0)$, but not on $V_0$. This estimate will allow us to meet all smallness requirement for $u_0$, which we will encounter in the proof of 
\Theoremref{thm:fluidresult2}, by imposing appropriate smallness conditions on $T_0$ together with corresponding  bounds on $V_0$.

To complete the first step of the proof, we now establish the existence of solutions to the Cauchy problem for the system \eqref{eq:Eulereqssymmhyp2NCP} where the initial data at time $t=T_0$ is specified
according to \eqref{eq:EulerPertuData} and must be chosen suitably small. The precise statement for the existence result is given in the following proposition.
The proposition also yields a first preliminary estimate of the behaviour of the  solutions at $t=0$. This preliminary estimate, which is not yet sharp, will be improved successively in Step~2 of the proof. We express all the results in terms of the quantity $U_C$ instead of $u$; cf.\ \eqref{eq:defvtimes}.

\begin{proposition}
\label{prop:EulerSolveCPCor}
Suppose ${K}$ and $\gamma$ satisfy \eqref{eq:gammarestr}, and $\Gamma_1$, $\Gamma_2$ and
$\Gamma_3$ are defined by \eqref{eq:GammaDef}, $k\in \Zbb_{\geq 3}$, $\ell \in \Zbb_{\geq 1}$ with
$\ell>\Gamma_1/q$ for
$q=\min\{1-\Gamma_1,2\Gamma_3\}$ and $\sigma\in (0,\Gamma_3)$. Further, for given asymptotic data $v_*=(v_*^0,\ldots,v_*^3)^{\tr}\in H^{k+\ell}$ with $v_*^0>0$,
let $V_S$ be the solution  asserted by \Theoremref{thm:fluidresult} of the singular initial value problem of the Euler equations \eqref{eq:AAA1} with equation of state \eqref{eq:EulerEOS} defined on the Kasner spacetime given by \eqref{Kasner-k} on a time interval $[\Tt_0,0)$.

\medskip

\noindent Then there exists $\delta>0$ such that for all sufficiently small $T_0\in [\Tt_0,0)$ and for any $V_0\in H^k(\Sigma)$ with $\norm{V_0}_{H^k(\Sigma)}\le \delta$, the Cauchy problem of
the Euler equations \eqref{eq:AAA1} with equation of state \eqref{eq:EulerEOS} defined on the Kasner spacetime given by \eqref{Kasner-k} with Cauchy data $V_{C,0}=V_S(T_0)+\Th^{-1}(T_0)T(T_0)V_0$ imposed at $t=T_0$
has a unique solution $V_C(t)=T(t) U_C(t)$ with
\begin{equation}
  \label{eq:EulerPertFirstRegU}
U_C \in C_b^0\bigl([T_0,0),H^{k}(\Sigma)\bigr)\cap C^1\bigl([T_0,0),H^{k-1}(\Sigma)\bigr),
\end{equation}
where $T(t)$ and $\Th(t)$ are defined in \eqref{eq:EulerResc0} and \eqref{Thdef}, respectively, and $U_C$ is bounded by
\begin{equation}
  \label{eq:EulerPertFirstDecayU}
  \norm{U_C^1(t)}_{H^{k-1}(\Sigma)}
+\norm{U_C^2(t)}_{H^{k-1}(\Sigma)}
+\norm{U_C^3(t)}_{H^{k-1}(\Sigma)}
\lesssim |t|^{\Gamma_3-\sigma}
\end{equation}
for all $t\in [T_0,0)$. Moreover, the limit
 $\lim_{t\nearrow 0} U_C^0$, denoted $U_C^0(0)$, exists in $H^{k-1}(\Sigma)$
and satisfies
\begin{equation}
  \label{eq:EulerCPLimContU}
  \norm{U_C^0(0)-v_*^0}_{H^{k-1}(\Sigma)}\lesssim |T_0|^{\epsilon}+\norm{V_0}_{H^{k}(\Sigma)},
\end{equation}
with $\epsilon$ defined in Theorem~\ref{thm:fluidresult},
and the estimate
\begin{equation}
  \label{eq:EulerPertFirstLimitU}
  \norm{U_C^0(t)-U_C^0(0)}_{H^{k-1}(\Sigma)}
\lesssim |t|^{1-\Gamma_1+\Gamma_3}+|t|^{2(\Gamma_3-\sigma)}
\end{equation}
holds for all $t\in [T_0,0)$. Finally, $U_C$
satisfies $U_C^0(0)>0$
provided $|T_0|$ and $\norm{V_0}_{H^k(\Sigma)}$ are chosen sufficiently small.
\end{proposition}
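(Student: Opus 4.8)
The proof will follow the same pattern as that of \Theoremref{thm:fluidresult}, but with the backward existence theorem \Theoremref{thm:Fuchsian-IVP} playing the role that \Theoremref{thm:mainresult} played there. The first step is to recognise the reformulated system \eqref{eq:Eulereqssymmhyp2NCP} for $u=U_C-v_\times$ as a symmetric hyperbolic Fuchsian system of the type treated in Appendix~\ref{coeffassumps}, relative to the constant projection $\Pbb=\mathrm{diag}(0,1,1,1)$, which separates the \emph{limiting} variable $u^0=U_C^0-v_*^0$ from the \emph{decaying} variables $(u^1,u^2,u^3)=(U_C^1,U_C^2,U_C^3)$. The structural facts that make this work are: (i) by \eqref{eq:Eulerpol1}--\eqref{eq:Eulerpol2}, the matrix $B^0(v_\times+u)$ is symmetric and, for $u$ small and $|T_0|$ small, uniformly positive definite on $\tilde{\Zc}_{\rc,\Rc}\oplus B_R(\Vbb)$ with $\rc<v_*^0<\Rc$, exactly as in the proof of \Lemref{lem:Euler1}; (ii) by \eqref{eq:BcEulerPol} and \eqref{eq:Eulerpol1}, $\Bc(v_\times+u)$ is diagonal, annihilates $\ker\Pbb$, and at $u=0$ equals $\mathrm{diag}(0,\Gamma_1,\Gamma_2,\Gamma_3)\,\Pbb B^0(v_\times)\Pbb$, so that $\Bc(v_\times+u)\ge(\Gamma_3-o(1))\,\Pbb B^0(v_\times+u)\Pbb$ on $\mathrm{range}\,\Pbb$ near $u=0$ --- a one-sided bound with positive constant, using $\Gamma_1\ge\Gamma_2\ge\Gamma_3>0$ from \eqref{eq:Gammarestr}; (iii) since $v_\times$ is $t$-independent, the source of \eqref{eq:Eulereqssymmhyp2NCP} reduces to $G(t,u)$, which by \eqref{eq:GEulerPol} is directed along $\ker\Pbb$ and is $\Ord(|t|^{-1}|\Pbb u|^2)$ --- precisely the form of $\tfrac1t$-singular source admitted in the backward framework; and (iv) the spatial coefficients $B^1(v_\times+u)$ and $B^2(t,v_\times+u)=(-t)^{\Gamma_2-\Gamma_1}(\cdots)$, $B^3(t,v_\times+u)=(-t)^{\Gamma_3-\Gamma_1}(\cdots)$ satisfy the required regularity and $|t|$-bounds, with the analogues of $\beta$ and of $\mathtt{b}$ (see \eqref{eq:defb}) both vanishing, as in \Lemref{lem:Euler1}.

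Granting this, the second step is a computation strictly parallel to the proof of \Lemref{lem:Euler1}, verifying that all coefficient hypotheses of Appendix~\ref{coeffassumps} hold once the free constants are chosen appropriately; in particular we fix the decay exponent of $\Pbb u$ to be $\Gamma_3-\sigma<\Gamma_3$, which is admissible precisely because $0<\sigma<\Gamma_3$, and take $|T_0|$ small enough. Since $\beta=\mathtt{b}=0$, the spectral positivity condition of Appendix~\ref{coeffassumps} reduces to $\Gamma_3>0$ and holds automatically. \Theoremref{thm:Fuchsian-IVP} then yields, for every Cauchy datum $u(T_0)$ with $\norm{u(T_0)}_{H^k(\Sigma)}$ below a threshold $\eta_0>0$ depending only on the fixed data, a unique global solution $u\in C^0_b([T_0,0),H^k(\Sigma,\Vbb))\cap C^1([T_0,0),H^{k-1}(\Sigma,\Vbb))$ of \eqref{eq:Eulereqssymmhyp2NCP} with that initial condition, for which $\norm{\Pbb u(t)}_{H^{k-1}(\Sigma)}\lesssim|t|^{\Gamma_3-\sigma}$ and $u^0(t)=(\mathrm{id}-\Pbb)u(t)$ converges in $H^{k-1}(\Sigma)$ as $t\nearrow0$. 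The required smallness of $u(T_0)$ is exactly \eqref{eq:EulerPertuData}, and by \eqref{eq:EulerPertuDataEst} it is met by first choosing $|T_0|$ small enough that the $|T_0|^{\Gamma_3}$ term is below $\eta_0/2$, and then choosing $\delta$ small (depending on $T_0$) so that $|T_0|^{-\Gamma_1}\norm{V_0}_{H^k(\Sigma)}\le|T_0|^{-\Gamma_1}\delta$ is below $\eta_0/2$; here we use $\Gamma_1=\max\{\Gamma_1,\Gamma_2,\Gamma_3\}$ to bound $\norm{T(T_0)^{-1}V_0}_{H^k}\le|T_0|^{-\Gamma_1}\norm{V_0}_{H^k}$. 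Setting $U_C=v_\times+u$ and $V_C=TU_C$ produces the asserted solution of the Euler equations with the regularity \eqref{eq:EulerPertFirstRegU}; uniqueness within that class is the standard uniqueness statement for Cauchy problems of symmetric hyperbolic systems (e.g.\ \cite[Ch.~16]{taylor2011}), since the solution exists on all of $[T_0,0)$.

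The third step is bookkeeping. The decay of $\Pbb u$ is \eqref{eq:EulerPertFirstDecayU}, since $\Pbb u=(0,U_C^1,U_C^2,U_C^3)$. Put $U_C^0(0):=v_*^0+\lim_{t\nearrow0}u^0$, which exists in $H^{k-1}(\Sigma)$ by Step~2; then \eqref{eq:EulerCPLimContU} follows from \eqref{eq:EulerPertuDataEst} together with a uniform bound on $\norm{u^0(t)-u^0(T_0)}_{H^{k-1}(\Sigma)}$, obtained by integrating the $0$-component of \eqref{eq:Eulereqssymmhyp2NCP} in time --- in which the $\tfrac1t\Bc(v_\times+u)u$ term has vanishing $0$-component. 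For \eqref{eq:EulerPertFirstLimitU} one integrates that same $0$-component from $t$ to $0$: the contribution of $G$ is $\Ord(|s|^{2(\Gamma_3-\sigma)-1})$ by \eqref{eq:GEulerPol} and \eqref{eq:EulerPertFirstDecayU}, integrating to $\Ord(|t|^{2(\Gamma_3-\sigma)})$, while the contributions involving $B^2$ and $B^3$ carry the weights $(-t)^{\Gamma_2-\Gamma_1}$ and $(-t)^{\Gamma_3-\Gamma_1}$ and, combined with \eqref{eq:EulerPertFirstDecayU} and the Moser and Sobolev inequalities, are bounded by $\Ord(|s|^{-\Gamma_1+\Gamma_3})$, integrating to $\Ord(|t|^{1-\Gamma_1+\Gamma_3})$ (finite since $\Gamma_1<1$). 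Finally, $U_C^0(0)>0$ for $|T_0|$ and $\norm{V_0}_{H^k(\Sigma)}$ small is immediate from \eqref{eq:EulerCPLimContU}, the Sobolev embedding $H^{k-1}(\Sigma)\hookrightarrow L^\infty(\Sigma)$ (valid since $k-1\ge2>n/2$), and $v_*^0>0$.

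I expect the first two steps to be the main obstacle: fitting the Euler system in the form \eqref{eq:Eulereqssymmhyp2NCP} to the full list of coefficient assumptions of Appendix~\ref{coeffassumps}. The delicate points are the identification of the correct projection $\Pbb$, the one-sided bound on $\Bc$ on $\mathrm{range}\,\Pbb$, and the verification that the $\tfrac1t$-singular source $G$ lies in the admissible class --- directed along $\ker\Pbb$ and quadratically small in $\Pbb u$. These are, however, close analogues of the corresponding verifications in \Lemref{lem:Euler1}, so that the application of \Theoremref{thm:Fuchsian-IVP} and the time integrations of Step~3 then become routine.
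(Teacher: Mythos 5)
Your overall strategy is the right one and matches the paper's: identify $\Pbb=\mathrm{diag}(0,1,1,1)$, cast \eqref{eq:Eulereqssymmhyp2NCP} into the backward Fuchsian framework of Appendix~\ref{coeffassumps}, invoke \Theoremref{thm:Fuchsian-IVP}, then integrate the $0$-component in time to extract the limit and the decay rates. The bookkeeping in your Step~3 is essentially what the paper does. However, there are two genuine gaps in your Steps~1--2.

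The most important one concerns the lower bound on $\Bc$. Coefficient assumption~(2) of Appendix~\ref{coeffassumps} requires the \emph{global} operator inequality $B^0\le\frac{1}{\kappa}\Bc$, which forces $\Bc$ to be uniformly positive definite on the whole fibre. The Euler $\Bc$ from \eqref{eq:BcEulerPol} is $r\,v^0\,\mathrm{diag}(0,\Gamma_1,\Gamma_2,\Gamma_3)$ and has a zero eigenvalue, so this inequality fails for every $\kappa>0$. You note the bound holds on $\mathrm{range}\,\Pbb$ but then proceed as if this is sufficient; it is not. The paper's fix is to replace $\Bc$ by $\bar\Bc(\Zt,v)=\Bc(\Zt+v)+r(\Zt+v)(\Zt^0+v^0)\,a\,\Pbb^\perp$ for a suitable constant $a>0$, which is uniformly positive definite yet satisfies $\bar\Bc\Pbb=\Bc\Pbb$, so the singular term $\frac1t\Bc u=\frac1t\bar\Bc\Pbb u$ is unchanged. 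Without introducing some analogue of $\bar\Bc$, the backward coefficient hypotheses simply cannot be met, and \Theoremref{thm:Fuchsian-IVP} cannot be applied.

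The second gap is the claim that the analogues of $\beta$ all vanish ``as in \Lemref{lem:Euler1}''. That was true in \Lemref{lem:Euler1} because there the equation is for the \emph{remainder} $U-U_*$ and the source $F$ carries no $\frac1t$-singularity, so $\Div B$ has only the mild $|t|^{-(1-p)}$ piece. In the present backward formulation one subtracts only $v_\times$, the source retains the $\frac1t$-singular piece $F_2=-t\,G$ together with the $\frac1t\Bc\Pbb u$ term, and these feed into $\Div B$ through $D_vB^0\cdot(B^0)^{-1}[\frac1t\Bc\Pbb u+F]$. The resulting term $M$ produces nonvanishing $\beta_1,\beta_3,\beta_5$ of order $\Ord(R)$ and $\beta_7$ of order $\Ord(R^2)$, and likewise $\lambda_3=\Ord(R)$. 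The spectral condition \eqref{thm:Fuchsian-IVP-kappa} with $\kappa=\Gamma_3-\sigma$ is then satisfied only after shrinking $R$, not because the constants are identically zero. Your conclusion is reachable, but the reasoning needs this shrinking-$R$ step, which your ``$\beta=\mathtt{b}=0$'' shortcut misses.
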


We emphasize here that the implicit smallness condition for $T_0$ does not depend on $V_0$ so long as $\norm{V_0}_{H^k(\Sigma)}\le \delta$. This proposition establishes the existence of a limit $U_C ^0(0)$ for the time component of the fluid vector field and  \eqref{eq:EulerCPLimContU} states that this limit is close to the asymptotic data component $v_*^0$ provided $T_0$ is small. We expect that the first term on the right side of \eqref{eq:EulerCPLimContU} can be avoided by using more detailed estimates of $V_S$; in fact, it follows from uniqueness for the Cauchy problem that $v_*=U_C^0(0)$ if $V_0=0$. In any case, it will be the purpose of Proposition~\ref{prop:EulerSolveCPCorStrong} to establish corresponding statements for the spatial components of the fluid vector field.

\begin{proof}[Proof of \Propref{prop:EulerSolveCPCor}]
We will prove this proposition by first showing that the system \eqref{eq:Eulereqssymmhyp2NCP} satisfies all of the coefficient assumptions from Appendix \ref{coeffassumps}. After doing so, the proof will then follow directly from an application of Theorem \ref{thm:Fuchsian-IVP} together with Remark~\ref{rem:Fuchsian-IVP}.(i). To this end, we express \eqref{eq:Eulereqssymmhyp2NCP} in the form
\begin{align}
    B^0(v_{\times},u) \partial_tu
    +B^1(v_{\times},u) \partial_xu
    +B^2(t,v_{\times},u) \partial_yu
    +B^3(t,v_{\times},u) \partial_zu
    =\frac 1t \bar\Bc(v_{\times},u)\Pbb u 
    +F(t,w_{\times},u) \label{Euler-CPCor}
\end{align}
where 
\begin{equation}
  \label{eq:EulerCPDefwx}
  w_{\times}=(v_{\times},\partial_x v_{\times}, \partial_y v_{\times}, \partial_z v_{\times}),
\end{equation}
\begin{equation}
  \label{eq:EulerPertPbb}
  \Pbb=\mathrm{diag}\,(0,1,1,1),\quad \Pbb^\perp=\mathrm{diag}\,(1,0,0,0),
\end{equation}
and, with the same slight abuse of notation as in \Sectionref{sec:proofEulerSIVP}, we have set 
\begin{align*}
B^0(\Zt,v)&=B^0(\Zt+v),\\
B^1(\Zt,v)&= B^1(\Zt+v), \\
B^2(t,\Zt,v)&= B^2(t,\Zt+v),\\
B^3(t,\Zt,v)&= B^3(t, \Zt+v),\\
\bar\Bc(\Zt,v)&=\Bc(\Zt+v)+r(\Zt+v)(\Zt^0+v^0) a\Pbb^\perp
\end{align*}
where $r$ is the polynomial used in \eqref{eq:B0EulerPol} -- \eqref{eq:GEulerPol}
and
\begin{equation}
\label{eq:Eulereqssymmhyp2NCPNN}
F(t,Z,v)=-B^1(Z_0+v) Z_{01}-B^2(t, Z_0+v) Z_{02}-B^3(t, Z_0+v) Z_{03}+G(t,v).
\end{equation}
Here, $a\in \Rbb$ is a constant to be fixed below, $v \in \Vbb$, $\Zt\in Z_1$ where
$Z_1$ is the rank-$1$-subbundle of $\Vbb$ defined by elements of the form $\Zt=(\Zt^0,0,0,0)^{\tr}$, and 
$Z\in Z_2$ where $Z_2$ is the trivial rank-$4$ bundle $Z_2=Z_1\oplus Z_1\oplus Z_1\oplus Z_1$ whose elements we can can describe in components $Z=(Z_0, Z_{01}, Z_{02}, Z_{03})^{\tr}$.

Now, for any  $\Rc>0$ and $\rc\in (0,\Rc)$, we further define $\Zc_1$ as the bounded open subset of $Z_1$ given by vectors $\Zt$  with components $(\Zt^0,0,0,0)$ with $r<\Zt^0<\Rc$. Correspondingly, the bounded open subset $\Zc_2$ of $Z_2$ is defined as $\Zc_1\oplus B_\Rc(Z_1)\oplus B_\Rc(Z_1)\oplus B_\Rc(Z_1)$. Given these definitions and the expression \eqref{eq:B0EulerPol} and \eqref{eq:BcEulerPol}, it is not difficult to verify, for any $R>0$, that
\begin{equation}\label{B0barBc-reg-CPcor}
B^0 \in  C^\infty\bigl(\Zc_1\oplus B_R(\Vbb),L(\Vbb)\bigr) \AND \bar\Bc\in C^\infty\bigl(\Zc_1
\oplus B_R(\Vbb),L(\Vbb)\bigr).
\end{equation} Moreover, given
\begin{equation*} 
    \sigma \in (0,\Gamma_3),
\end{equation*}
we can, by reducing the size of $R$ if necessary, ensure that the inequality 
\begin{equation} 
\frac{1}{\gamma_1} \text{id}_{\Vbb_{\pi(v)}} \leq  B^0(\Zt, v)\leq \frac{1}{\kappa} \bar\Bc(\Zt,v) \leq \gamma_2 \textrm{id}_{\Vbb_{\pi(v)}}
\end{equation}
holds
for  all $(\Zt,v)\in \Zc_1\oplus B_{R}(V)$ by setting
\begin{equation}
\label{eq:EulerCPConst}
\gamma_1=\frac 2{\rc^3(\gamma-1)},\quad \kappa=\Gamma_3-\sigma,\quad a=\Gamma_3/(\gamma-1),
\end{equation}
and choosing $\gamma_2>0$ accordingly
making use of \eqref{eq:gammarestr} and \eqref{eq:Gammarestr}.

We further observe, with the help of \eqref{eq:B0EulerPol} and \eqref{eq:BcEulerPol}, that the matrices $\Pbb$ and $\bar\Bc$ commute, that is,
\begin{equation}
[\Pbb,B^0(\Zt,v)]=0
\end{equation}
for all $(t,\Zt,v)\in [T_0,0)\times \Zc_1\oplus B_{R}(V)$, 
and the matrix $B^0$ is symmetric and satisfies
\begin{equation}\label{ProjB0-CPCor}
\Pbb B^0(\Zt,v) \Pbb^\perp=(\Pbb ^\perp B^0(\Zt,v) \Pbb)^{\tr}=\Ord(\Pbb v).
\end{equation}
 It also follows from \eqref{eq:B0EulerPol}, \eqref{eq:BcEulerPol}, \eqref{eq:Eulerpol1} and \eqref{eq:EulerCPConst} that
 $\bar{\Bc}$ and $B^0$ can be expanded, respectively, as
\begin{equation} \label{barBcexp-CPCor}
\bar\Bc(\Zt,v)=\tilde{\bar\Bc}(\Zt)+\Ord(v),\quad
\tilde{\bar\Bc}(\Zt) =(\gamma-1)(\Zt^0)^3\mathrm{diag}\,\left(a,\Gamma_1,\Gamma_2,\Gamma_3\right),
\end{equation}
and
\begin{equation} \label{B0exp-CPCor}
B^0(\Zt,v)={\Bt^0}(\Zt)+\Ord(v),\quad
{\Bt^0}(\Zt)=(\Zt^0)^3\mathrm{diag}\,\left(1, \gamma-1, \gamma-1, \gamma-1\right),
\end{equation}
where the maps 
$\Bt^0$ and $\tilde{\bar\Bc}$ lie in
$C^0\bigl([T_0,0], C^\infty(\Zc_1,L(\Vbb))\bigr)$.
We further note by \eqref{eq:B1EulerPol} -- \eqref{eq:B3EulerPol} that the matrices $B^1$, $B^2$ and $B^3$ are all symmetric and satisfy
\begin{equation}\label{B1B2B3-reg-CPcor}
B^1 \in  C^\infty\bigl(\Zc_1\oplus B_R(\Vbb),L(\Vbb)\bigr) \AND B^2,B^3\in C^0\bigl([T_0,0],C^\infty\bigl(\Zc_1
\oplus B_R(\Vbb),L(\Vbb)\bigr)\bigr).
\end{equation}

Next, we turn our attention to the map $F$ defined above by \eqref{eq:Eulereqssymmhyp2NCPNN}.
We decompose $F$ as
\begin{equation} \label{Fexp-CPCor}
    F(t,Z,v)= \frac{1}{|t|^{1-p}}\Ft(t,Z)
    + \frac{1}{|t|^{1-p}}F_0(t,Z,v) +
    \frac{1}{|t|}F_2(t,Z,v)
\end{equation}
where
\begin{align}
  \Ft(t,Z)&=-(-t)^{1-p}\Bigl(B^1(Z_0,0) Z_{01}+B^2(t, Z_0,0) Z_{02}+B^3(t, Z_0,0) Z_{03}\Bigr), \label{eq:EulerCPFt}\\
  F_0(t,Z,v)&=-(-t)^{1-p}\Bigl( (B^1(Z_0,v)-B^1(Z_0,0)) Z_{01}
            +\bigl(B^2(t,Z_0,v)-B^2(t,Z_0,0)) Z_{02} \notag \\
            &\hspace{5.0cm}
            +(B^3(t,Z_0,v)-B^3(t,Z_0,0)\bigr) Z_{03}\Bigr) \label{eq:EulerCPF0}
\intertext{and}
  F_2(t,Z,v)&=-t\, G(t,v).\label{eq:EulerCPF2}
\end{align}
Assuming for the moment that $p\in (0,1]$,
it not difficult to verify from \eqref{eq:B1EulerPol} -- \eqref{eq:B3EulerPol}, \eqref{eq:GEulerPol} and \eqref{eq:EulerCPFt} -- \eqref{eq:EulerCPF2} that
\begin{equation}\label{FtF0F2-reg-CPcor}
\Ft \in C^0\bigl([T_0,0],C^\infty(\Zc_2,\Vbb )\bigr)
\AND
F_0,F_2 \in C^0\bigl([T_0,0],C^\infty\bigl(\Zc_2
\oplus B_R(\Vbb),\Vbb\bigr)\bigr).
\end{equation}
We further observe from \eqref{eq:GEulerPol} and \eqref{eq:EulerCPF0}-\eqref{eq:EulerCPF2} that 
\begin{equation} \label{F2-CPCor}
F_0 = \Ord(v), \quad 
\Pbb F_2 =0 \AND  F_2 = \Ordc\biggl(\frac{\lambda_3}{R} v\otimes v\Bigr)
\end{equation}
for some constant $\lambda_3=\Ord(R)$.

Before proceeding, we note that because $v_*^0\in H^{k+1}(\Sigma)$ satisfies $v^0_*>0$ and is time-independent by assumption, we have by \eqref{eq:defvtimes} and the Sobolev inequality that there exist constants $0<\rc<\Rc_0$ such that
\begin{equation*} 
    v_{\times}\in H^{k+1}(\Sigma,\Zc_1) \AND \del{t}v_{\times}=0
\end{equation*}
for any $\Rc\geq \Rc_0$.
This, in turn, implies with the help of the \eqref{eq:EulerCPDefwx} and the Sobolev inequality that
\begin{equation} \label{wtimesreg}
    w_{\times}\in H^{k}(\Sigma,\Zc_2) \AND \del{t}w_{\times}=0 
\end{equation}
for $\Rc$ chosen sufficiently large.

To complete the analysis of the coefficients of the Fuchsian equation \eqref{Euler-CPCor},  
we need to examine the divergence map $\Div\!B$ given locally by \eqref{divBdef}, where all the connection coefficients in \eqref{divBdef} vanish since the connection we are using is flat and its coefficients vanish in the global frame $\{\del{x},\del{y},\del{z}\}$. Furthermore, since $v_\times$ is independent of $t$, we can take the map $\Div\! B$ to be independent of the variable $\dot{z}_1$ since $z_1=v_\times$ and $\del{t}v_\times=0$. This implies in particular that the constant $q$ in \eqref{divBdef} plays no role here. 
We  recall also that the map $\Div \! B$, when evaluated on solutions
of \eqref{Euler-CPCor}, is given by $\Div\! B= \del{t}B^0+\del{x}B^1+\del{y}B^2+\del{z}B^3$, see \eqref{divBonshell}.

Now, it is easy to see that all the terms in $\Div\! B$ other than
\[M:=\frac 1t D_v B^0(x,z_1,v)\cdot (B^0(x,z_1,v))^{-1}\Bigl[\Bc(x,z_1,v)\Pbb v + t G(t,x,v)\Bigr]\]
are $\Ordc( |t|^{-(1-p)}\theta)$ for some $\theta>0$ provided $p$ is chosen to satisfy
\begin{equation*}
  p= 1-\Gamma_1+\Gamma_3.
\end{equation*}
To proceed with the analysis, we decompose the $M$ as follows
\begin{align*}
M=\frac 1t \Bigl[\bigl(\underbrace{D_{\Pbb v} B^0(x,z_1,v)}_{=\Ord(1)}\cdot \underbrace{\Pbb (B^0(x,z_1,v))^{-1}\Pbb}_{=\Ord(1)}
+\underbrace{D_{\Pbb^\perp v} B^0(x,z_1,v)}_{=\Ord(1)}\cdot \underbrace{\Pbb^\perp (B^0(x,z_1,v))^{-1}\Pbb}_{=\Ord(\Pbb v)}\bigr) \underbrace{\Bc(x,z_1,v)\Pbb v}_{=\Ord(\Pbb v)}\\
+ \bigl(\underbrace{D_{\Pbb v} B^0(x,z_1,v)}_{=\Ord(1)}\cdot \underbrace{\Pbb (B^0(x,z_1,v))^{-1}\Pbb^\perp}_{=\Ord(\Pbb v)}
+\underbrace{D_{\Pbb^\perp v} B^0(x,z_1,v)}_{=\Ord(1)}\cdot \underbrace{\Pbb^\perp (B^0(x,z_1,v))^{-1}\Pbb^\perp}_{=\Ord(1)}\bigr) \underbrace{t G(t,x,v)}_{=\Ord(\Pbb v\otimes\Pbb v)}\Bigr].
\end{align*}
From this expression, it is clear that there exist constants $\beta_1$, $\beta_3$ and $\beta_5$ of $\Ord(R)$ such that
\begin{equation}  \label{DivBexp1-CPCor}
\Pbb M\Pbb=\Ordc(|t|^{-1}\beta_1),\quad \Pbb M\Pbb^\perp=\Ordc\left(|t|^{-1}\frac{\beta_3}R \Pbb v\right),\quad \Pbb^\perp M\Pbb=\Ordc\left(|t|^{-1}\frac{\beta_5}R \Pbb v\right).
\end{equation}
In order to show that there also exists a constant $\beta_7=O(R^2)$ such that
\begin{equation} \label{DivBexp2-CPCor}
\Pbb^\perp M\Pbb^\perp=\Ordc\left(|t|^{-1}\frac{\beta_7}{R^2} \Pbb v\otimes\Pbb v\right),
\end{equation}
we need to have a closer look at $\Pbb^\perp D_{\Pbb v} B^0(x,z_1,v) \Pbb^\perp$
in the above formula for $M$. Since it follows immediately from \eqref{eq:B0EulerPol}, \eqref{eq:Eulerpol2} and \eqref{eq:EulerPertPbb} that 
$\Pbb^\perp D_{\Pbb v} B^0(x,z_1,v)\Pbb^\perp=\Ord(\Pbb v)$, we see that \eqref{DivBexp2-CPCor} does in fact hold for some some constant $\beta_7=O(R^2)$.

From the above analysis, and in particular the results \eqref{eq:EulerPertPbb}, \eqref{B0barBc-reg-CPcor} -- \eqref{Fexp-CPCor}, \eqref{FtF0F2-reg-CPcor}-\eqref{F2-CPCor} and \eqref{DivBexp1-CPCor} -- \eqref{DivBexp2-CPCor}, it follows that the Fuchsian equation \eqref{Euler-CPCor} satisfies all of the coefficient assumptions from Appendix \ref{coeffassumps} for the following choice of constants:
\begin{itemize}
    \item[(i)] $\gamma_1$, $\gamma_2$, $\kappa$, $p$, $\theta$, $\lambda_3$, $\beta_1$, $\beta_3$, $\beta_5$ and $\beta_7$ as above,
    \item[(ii)]  and $\alpha=0$, $\lambda_1=\lambda_2=0$, and $\beta_0=\beta_2=\beta_4=\beta_6=0$.
\end{itemize}

Noting that the constant $\mathtt{b}$ defined by \eqref{thm:Fuchsian-IVP-btt} vanishes, it is clear, by reducing the size of $R$ if necessary, that we can ensure that all the constants $\beta_i$ and $\lambda_i$ are small enough so that $\kappa$ satisfies the inequality \eqref{thm:Fuchsian-IVP-kappa}.
Additionally, we note from \eqref{eq:Gammarestr},  \eqref{eq:B1EulerPol} -- \eqref{eq:B3EulerPol}, \eqref{eq:EulerCPFt} and \eqref{wtimesreg} that
\begin{equation*}
  \norm{\Ft(t, w_\times)}_{H^k(\Sigma)}\lesssim |t|^{1-\Gamma_1+\Gamma_3-p},
\end{equation*}
and, since $\Ft(t, w_\times)$ is independent of the choice of $V_0$, the implicit constant here does not depend on $V_0$ (but on $V_S$).
From this, we deduce, since $1-\Gamma_1+\Gamma_3>0$, that
\begin{equation} \label{eq:EulerCPFTEst}
    \int_{T_0}^0 |s|^{p-1}\norm{\Ft(s, w_\times)}_{H^k(\Sigma)}\, ds \lesssim
    \int_{T_0}^0 (-s)^{\Gamma_3-\Gamma_1}\,ds \lesssim |T_0|^{\Gamma_3-\Gamma_1+1}.
\end{equation}
Thus, we can
arrange that 
\begin{equation}
  \label{eq:EulerCPFTEst2}
 \int_{T_0}^0 |s|^{p-1}\norm{\Ft(s, w_\times)}_{H^k(\Sigma)}\, ds \le\delta
\end{equation}
in a way that is independent of the choice of $V_0$
for any given $\delta>0$ by reducing the size of $|T_0|$ as needed. 
This allows us to satisfy the smallness condition in Theorem \ref{thm:Fuchsian-IVP} for the term $\Ft$ from the expansion \eqref{Fexp-CPCor} for $F$. 

Given all this, an application of Theorem \ref{thm:Fuchsian-IVP} from the Appendix \ref{appendix:existence} yields, for $u_0$ satisfying $\norm{u_0}_{H^k(\Sigma)}\leq \delta$ with $\delta>0$ sufficiently small, the existence of a unique solution $u$ of the Cauchy problem of \eqref{eq:Eulereqssymmhyp2NCP} with  $u(T_0)=u_0$ and satisfying
\begin{equation}
\label{eq:EulerPertFirstReg}
u \in C_b^0\bigl([T_0,0),H^{k}(\Sigma)\bigr)\cap C^1\bigl([T_0,0),H^{k-1}(\Sigma)\bigr).
\end{equation}
Shrinking $R$  even further if necessary to make $\beta_1$ sufficiently small, we can make the quantity $\zeta$ in  Theorem~\ref{thm:Fuchsian-IVP} arbitrarily close to $\kappat=\kappa$, and the estimate
\begin{equation}
  \label{eq:EulerPertFirstDecay}
  \norm{u^1(t)}_{H^{k-1}(\Sigma)}+\norm{u^2(t)}_{H^{k-1}(\Sigma)}+\norm{u^3(t)}_{H^{k-1}(\Sigma)}
\lesssim|t|^{\Gamma_3-\sigma}, \quad T_0\leq t < 0,
\end{equation}
follows from
the decay estimate \eqref{eq:decayPQunt} for $\norm{\Pbb u}_{H^{k-1}}$ from Theorem~\ref{thm:Fuchsian-IVP}  since  $\lambda_1=\alpha=\mathtt{b}=0$, $\kappa=\Gamma_3-\sigma>0$, and the projection matrix $\Pbb$ is defined by \eqref{eq:EulerPertPbb}.  
Theorem \ref{thm:Fuchsian-IVP} also yields the existence of the limit $\lim_{t\nearrow 0} u^0$  in $H^{k-1}(\Sigma)$, which we denote as $u^0(0)$, and the estimate
\begin{equation}
  \label{eq:EulerPertFirstLimit}
  \norm{u^0(t)-u^0(0)}_{H^{k-1}(\Sigma)}
\lesssim|t|^{1-\Gamma_1+\Gamma_3}+|t|^{2(\Gamma_3-\sigma)}, \quad T_0\leq t < 0,
\end{equation}
is also a direct consequence of this theorem and the improvement to the decay estimate for $\norm{u(t)-u(0)}_{H^{k-1}}$ described in Remark~\ref{rem:Fuchsian-IVP}.(i). 
Finally, from the energy estimate from Theorem \ref{thm:Fuchsian-IVP},
we have that
\begin{equation*}
  \norm{u^0(0)}_{H^{k-1}(\Sigma)}\lesssim \norm{u_0}_{H^k(\Sigma)}+\int_{T_0}^0 |s|^{p-1}\norm{\Ft(s, w_\times)}_{H^k(\Sigma)}\, ds.
\end{equation*}
In the light of \eqref{eq:EulerCPFTEst},
it is then clear that the inequality
\begin{equation}
  \label{eq:EulerCPLimCont}
  \norm{u^0(0)}_{H^{k-1}(\Sigma)}\lesssim \norm{u_0}_{H^k(\Sigma)}+|T_0|^{1-\Gamma_1+\Gamma_3}
\end{equation}
holds. 

Finally, we notice that the existence of the solution $V_S$ is guaranteed by \Theoremref{thm:fluidresult} under our hypothesis here. Since $U_C^0(0)=v_*^0+u^0(0)$, the proof is completed by invoking \eqref{eq:defvtimes}, \eqref{eq:EulerPertuData} and \eqref{eq:EulerPertuDataEst}.
\end{proof}

\bigskip
\noindent \underline{Step 2: Improved asymptotics and existence of limits for \emph{all} components of $V_C$}
\bigskip

\noindent 
Setting 
\begin{equation} \label{WCdef}
    W_C=\Th U_C
\end{equation}
where $\Th$ is defined by \eqref{Thdef},
\Propref{prop:EulerSolveCPCor} implies that
\begin{equation}
\label{eq:EulerPertFirstRegNNN}
W_C^0, |t|^{-(\Gamma_3-\Gamma_1-\sigma)} W_C^1, |t|^{-(\Gamma_3-\Gamma_2-\sigma)} W_C^2, |t|^{\sigma} W_C^3\in C_b^0\bigl([T_0,0),H^{k-1}(\Sigma)\bigr),
\end{equation}
and that  the limit $W_C^0(0)=\lim_{t\nearrow 0}W_C^0$ exists in $H^{k-1}(\Sigma)$ and
is given by
\begin{equation*}
W_C^0(0)=U_C^0(0).
\end{equation*}
The aim of this step of the proof is now to establish the existence of limits for \emph{all} components of $W_C$ at the cost of some amount of differentiability. The precise statement is summarised in the following proposition.

\begin{proposition}
  \label{prop:EulerSolveCPCorStrong}
  Suppose in addition to the hypothesis of Proposition \ref{prop:EulerSolveCPCor} that the following hold: $v_*=(v_*^0,\ldots,v_*^3)^{\tr}\in H^{k+\ell_0+\ell}(\Sigma)$ with $v_*^0>0$, $V_0\in H^{k+\ell_0}(\Sigma)$ with $\norm{V_0}_{H^{k+\ell_0}(\Sigma)}\le \delta$, and $0<\sigma < \min\{\Gamma_3/4,\Gamma_2-\Gamma_3\}$ for any integer $\ell_0$ satisfying 
\begin{equation}
  \label{eq:EulerCPDefell0Pre}
  \ell_0\ge \ell+2,\quad \ell_0>\ell+\frac 12+\frac{\Gamma_1}{2(\Gamma_3-\sigma)}.
\end{equation}
Then the solution $U_C$ asserted by Proposition \ref{prop:EulerSolveCPCor} has the property that
\begin{equation*}
U_C \in C_b^0\bigl([T_0,0),H^{k+\ell_0}(\Sigma)\bigr)\cap C^1\bigl([T_0,0),H^{k+\ell_0-1}(\Sigma)\bigr),
\end{equation*}
and the limits
$\lim_{t\nearrow 0} U_C^0$ in $H^{k+\ell_0-1}(\Sigma)$, $    \lim_{t\nearrow 0}|t|^{-\Gamma_1} U_C^1$ in $H^{k+\ell}(\Sigma)$, $    \lim_{t\nearrow 0}|t|^{-\Gamma_2} U_C^2$ in $H^{k+\ell}(\Sigma)$ and $    \lim_{t\nearrow 0}|t|^{-\Gamma_3} U_C^3$ in $H^{k+\ell_0-2}(\Sigma)$,
which we denote by
$W_C^0(0)$, $W_C^1(0)$, $W_C^2(0)$, and $W_C^3(0)$, respectively, all exist. Moreover, the estimates
\begin{gather}
  \label{eq:EulerPertSecLimit1Pre}
  \norm{W_C^0 (t)-W_C^0(0)}_{H^{k+\ell_0-1}(\Sigma)}\lesssim |t|^{1-\Gamma_1+\Gamma_3}+|t|^{2(\Gamma_3-\sigma)},\\
\label{eq:EulerPertSecLimit2Pre}
\begin{split}
  \norm{W_C^1(t)-W_C^1(0)}_{H^{k+\ell}(\Sigma)}
  +\norm{W_C^2(t)-W_C^2(0)}_{H^{k+\ell}(\Sigma)}\lesssim & |t|^{1-\Gamma_1}+|t|^{2(\Gamma_3-\sigma)-2\sigma}\\
  &+|t|^{2 (\ell_0-\ell-1) (\Gamma_3-\sigma)-(\Gamma_1-\Gamma_3)-\sigma}
\end{split}
\intertext{and}
\label{eq:EulerPertSecLimit3Pre}
  \norm{W_C^3(t)-W_C^3(0)}_{H^{k+\ell_0-2}(\Sigma)}
  \lesssim |t|^{1-\Gamma_1}+|t|^{2(\Gamma_3-\sigma)-\sigma},
\end{gather}
hold for all $t\in [T_0,0)$.

Given another field $\Vt_0\in H^{k+\ell_0}(\Sigma)$ with $\norm{\Vt_0}_{H^{k+\ell_0}(\Sigma)}\le \delta$, the limits $W_C^i(0)$ and $\Wt_C^i(0)$ attained by the corresponding solutions $W_C(t)$ and $\Wt_C(t)$, respectively, launched from the same initial time $T_0$, satisfy the estimate
\begin{equation}
  \label{eq:stabilitycloseness}
  \begin{split}
  \norm{W_C^i(0)-\tilde W_{C}^i(0)}_{H^{k+\ell}(\Sigma)}\le &\norm{V_0^i-\Vt_0^i }_{H^{k+\ell}(\Sigma)}\\
  &+C\bigl(
  |T_0|^{1-\Gamma_1}+|T_0|^{2(\Gamma_3-\sigma)-2\sigma}
  +|T_0|^{2 (\ell_0-\ell-1) (\Gamma_3-\sigma)-(\Gamma_1-\Gamma_3)-\sigma}\bigr),
\end{split}
\end{equation}
for each $i=1,2,3$.
\end{proposition}

Before we prove \Propref{prop:EulerSolveCPCorStrong}, we observe that every $\ell_0$ that satisfies \eqref{eq:EulerCPDefell0} can also be made to satisfy \eqref{eq:EulerCPDefell0Pre} by choosing $\sigma>0$ sufficiently small. Further, we note that the estimate \eqref{eq:EulerPertSecLimit1} is the same as \eqref{eq:EulerPertSecLimit1Pre}, and the estimate \eqref{eq:EulerPertSecLimit2}  can be deduced from \eqref{eq:EulerPertSecLimit2Pre} and \eqref{eq:EulerPertSecLimit3Pre} if
\[2 (\ell_0-\ell-1) (\Gamma_3-\sigma)-(\Gamma_1-\Gamma_3)-\sigma\ge 2(\Gamma_3-\sigma)-2\sigma\]
which we establish as follows.
We observe that every $\ell_0$ that satisfies \eqref{eq:EulerCPDefell0} also
satisfies
\[\ell_0-\ell\ge \frac 32+\frac{\Gamma_1}{2(\Gamma_3-\sigma)}-\frac{\sigma}{\Gamma_3-\sigma}\]
provided we choose $\sigma>0$ sufficiently small.
Given this, we have
\begin{align*}
  &2 (\ell_0-\ell-1) (\Gamma_3-\sigma)-(\Gamma_1-\Gamma_3)-\sigma- 2(\Gamma_3-\sigma)+2\sigma\\
=& 2 (\ell_0-\ell-3/2) (\Gamma_3-\sigma)-(\Gamma_3-\sigma)-\Gamma_1+\Gamma_3+\sigma\\
\ge &(\Gamma_1-2\sigma)-(\Gamma_3-\sigma)-\Gamma_1+\Gamma_3+\sigma=0,
\end{align*}
and consequently, the estimate \eqref{eq:EulerPertSecLimit2}  follows from \eqref{eq:EulerPertSecLimit2Pre} and \eqref{eq:EulerPertSecLimit3Pre}.
\Propref{prop:EulerSolveCPCorStrong} therefore can be used to complete the proof of  \Theoremref{thm:fluidresult2} except for the last statement regarding the solution $\Vt_S$ of the singular initial value problem, which we will address in Step~3 of this proof below. 
\begin{proof}
By assumption $k\geq 5$ and $\ell_0\ge 0$ (as a consequence of
\eqref{eq:EulerCPDefell0Pre}), and so, we have by 
\eqref{eq:EulerPertFirstRegNNN} that
\begin{equation} \label{eq:EulerPertFirstRegNNNa}
W_C^0, |t|^{\Gamma_1-\Gamma_3+\sigma} W_C^1, |t|^{\Gamma_2-\Gamma_3+\sigma} W_C^2, |t|^{\sigma} W_C^3\in C_b^0\bigl([T_0,0),H^{k+\ell_0-1}(\Sigma)\bigr).
\end{equation}
Additionally, we observe from the definitions \eqref{eq:askjdj3} and \eqref{WCdef} that $W_C$
satisfies \eqref{eq:EulereqsDec} and \eqref{BhnGhndef} if we set $W_{n+1}=W_n=W_C$, $\Bh^i_{n+1}=\Bh^i_{n}=\Bh^i_{C}$ and $G_{n+1}=G_{n}=G_{C}$ in all the expressions there. Doing so shows that $W_C$ satisfies
  \begin{equation}
    \label{eq:3333339392}
  \partial_t W_C=
  - \Bh_C^i\partial_i W_C
+G_C.
  \end{equation}
Integrating this equation in time and expressing the result in components, we find that the components $W_C^j$, $j=0,1,2,3$, satisfy
\begin{equation}
  \label{eq:691823iwej2}
  W_C^j(t)=W_C^j(T_0)+\int_{T_0}^t\Bigl(- \sum_{i=1}^3\sum_{k=0}^3\Bh_{C,k}^{j,i}\partial_i W^k_C(s)
+G_C^j(s)\Bigr)ds
\end{equation}
for all $t\in [T_0,0)$. Applying the $H^l(\Sigma)$ norm to this expression for any integer $l$ with $2\le l\le k+\ell_0-2$, we obtain, with the help of the triangle inequality, the estimate
\begin{equation}
  \label{eq:691823iwej}
  \norm{W_C^j(t)}_{H^l(\Sigma)}\le \norm{W_C^j(T_0)}_{H^l(\Sigma)} +\int_{T_0}^t\bnorm{- \sum_{i=1}^3\sum_{k=0}^3\Bh_{C,k}^{j,i}\partial_i W^k_C(s)
+G_C^j(s)}_{H^l(\Sigma)}ds.
\end{equation}

In order to successively improve the estimates \eqref{eq:EulerPertFirstRegNNNa} for the components $W_C^1$ and $W_C^2$, we now set up the following inductive argument to show that
\begin{equation}
\label{eq:EulerPertFirstRegNNNNbb}
|t|^{-\min\{2m(\Gamma_3-\sigma)-(\Gamma_1-\Gamma_3)-\sigma,0\}}W_C^1,
|t|^{-\min\{2m(\Gamma_3-\sigma)-(\Gamma_2-\Gamma_3)-\sigma,0\}}W_C^2\in C_b^0\bigl([T_0,0),H^{k+\ell_0-1-m}(\Sigma)\bigr)
\end{equation}
for each integer $m\in [0,k+\ell_0-4]$. The base case $m=0$ is given by \eqref{eq:EulerPertFirstRegNNNa}. For the induction step $m\mapsto m+1$, let us assume that \eqref{eq:EulerPertFirstRegNNNNbb} holds for an arbitrary integer $m\in [0,k+\ell_0-5]$ and that $W^0_C$ and $W^3_C$ satisfy \eqref{eq:EulerPertFirstRegNNNa}. For convenience, let us define
\begin{equation}
  \label{eq:etadef}
  \eta_a:=-\min\{2m(\Gamma_3-\sigma)-(\Gamma_a-\Gamma_3),-\sigma\}
=\max\{\Gamma_a-\Gamma_3-2m(\Gamma_3-\sigma),\sigma\},
\end{equation}
for $a=1,2$. From this, the condition for $m$ above, the condition for $\sigma$ in the proposition, and \eqref{eq:Gammarestr}, we easily see that 
\begin{equation}
  \label{eq:etaest}
  0\le \eta_a\le\Gamma_a-\Gamma_3,\quad \eta_2\le\eta_1.
\end{equation}
A lengthy analysis involving \eqref{eq:BhnGhndef1} -- \eqref{eq:BhnGhndefLast} and \eqref{eq:EulerPertFirstRegNNNNbb} then shows, observing that $k+\ell_0-1-(m+1)>3/2$ as required to apply the calculus inequalities, in particular, the product and Moser estimates, we conclude with the help of the analogue of \eqref{eq:FuchsianEnergyEstimate} for \Propref{prop:EulerSolveCPCor} (which was not listed there for brevity) and \eqref{eq:EulerCPFTEst2} both for $k$ replaced by $k+\ell_0$, and, the condition $\norm{V_0}_{H^{k+\ell_0}(\Sigma)}\le \delta$, that
\begin{align}
  \label{eq:EulerCPEst11}
  \bnorm{- \sum_{i=1}^3\sum_{k=0}^3\Bh_{C,k}^{1,i}\partial_i W^k_C(s)
+G_C^1(s)}_{H^{k+\ell_0-1-(m+1)}(\Sigma)}&\lesssim |s|^{\min\{2(\Gamma_3-\sigma)-\eta_1-\sigma,1-\Gamma_1\}-1},\\
  \label{eq:EulerCPEst22}
\bnorm{- \sum_{i=1}^3\sum_{k=0}^3\Bh_{C,k}^{2,i}\partial_i W^k_C(s)
+G_C^2(s)}_{H^{k+\ell_0-1-(m+1)}(\Sigma)}&\lesssim |s|^{\min\{2(\Gamma_3-\sigma)-\eta_2-\sigma,1-\Gamma_1\}-1},\\
\label{eq:EulerCPEst33}
\bnorm{- \sum_{i=1}^3\sum_{k=0}^3\Bh_{C,k}^{3,i}\partial_i W^k_C(s)
+G_C^3(s)}_{H^{k+\ell_0-1-(m+1)}(\Sigma)}&\lesssim |s|^{\min\{2(\Gamma_3-\sigma)-\sigma,1-\Gamma_1\}-1},
\end{align}
with implicit constants that are independent of $V_0$ so long as $\norm{V_0}_{H^{k+\ell_0}(\Sigma)}\le \delta$.
From this \eqref{eq:691823iwej} implies that
\begin{equation*}
|t|^{-\min\{2(\Gamma_3-\sigma)-\eta_1-\sigma,0\}}W_C^1,
|t|^{-\min\{2(\Gamma_3-\sigma)-\eta_2-\sigma,0\}}W_C^2
\in C_b^0\bigl([T_0,0), H^{k+\ell_0-1-(m+1)}(\Sigma)\bigr),
\end{equation*}
where in deriving this we have invoked \eqref{eq:Gammarestr} again.
For each $a=1,2$, we have
\begin{align*}
  \min\{2(\Gamma_3-\sigma)-\eta_a-\sigma,0\}  &=\min\{2(\Gamma_3-\sigma)-\sigma+2m(\Gamma_3-\sigma)-(\Gamma_a-\Gamma_3), 2(\Gamma_3-\sigma)-2\sigma,0\}\\
  &=\min\{2(m+1)(\Gamma_3-\sigma)-(\Gamma_a-\Gamma_3) -\sigma, 0\}
\end{align*}
thanks to the condition for $\sigma$ in the hypothesis.
This establishes the induction step $m\mapsto m+1$ and therefore confirms that \eqref{eq:EulerPertFirstRegNNNNbb} holds for every integer $m\in [0,k+\ell_0-4]$.

Now, consider an arbitrary monotonically increasing sequence $\{t_n\}$ of negative numbers in $[T_0,0)$ that converges to $0$. \Eqref{eq:691823iwej2} implies
\begin{equation*}
  W_C^j(t_n)-W_C^j(t_{\nt})=\int_{t_{\nt}}^{t_n}\Bigl(- \sum_{i=1}^3\sum_{k=0}^3\Bh_{C,k}^{j,i}\partial_i W^k_C(s)
+G_C^j(s)\Bigr)\,ds,
\end{equation*}
and hence, that
\begin{equation}
  \label{eq:691823iwej2NNN}
  \bnorm{W_C^j(t_n)-W_C^j(t_{\nt})}_{H^{k+\ell_0-1-(m+1)}(\Sigma)}\le\int_{t_{\nt}}^{t_n}\bnorm{- \sum_{i=1}^3\sum_{k=0}^3\Bh_{C,k}^{j,i}\partial_i W^k_C(s)
+G_C^j(s)}_{H^{k+\ell_0-1-(m+1)}(\Sigma)}\, ds
\end{equation}
given any integer $m\in [0,k+\ell_0-5]$ provided $t_{\nt}\le t_n$.

Let us now pick $m=\ell_0-\ell-2$. It follows that $m\in [0,k+\ell_0-5]$ because of the first condition for $\ell_0$ in \eqref{eq:EulerCPDefell0Pre} and because $m=\ell_0-\ell-2\le (k-3)+\ell_0-\ell-2\le k+\ell_0-5$ given that $k\ge 3$ and $\ell\ge 0$ by assumption. With this choice of $m$ we also have that $2(\Gamma_3-\sigma)-\eta_1-\sigma>0$ since, by \eqref{eq:etadef}, we have
$2(\Gamma_3-\sigma)-\eta_1-\sigma
  =\min\{2(m+1)(\Gamma_3-\sigma)-(\Gamma_1-\Gamma_3)-\sigma, 2(\Gamma_3-\sigma)-2\sigma\}$, from which we get $2(\Gamma_3-\sigma)-2\sigma>0$ as a consequence of the restrictions for $\sigma$ and 
  \begin{align*}
    2(m+1)(\Gamma_3-\sigma)-(\Gamma_1-\Gamma_3)-\sigma
    =2(\ell_0-\ell-1)(\Gamma_3-\sigma)-(\Gamma_1-\Gamma_3)-\sigma\\
    >\frac{\Gamma_1-\Gamma_3+\sigma}{\Gamma_3-\sigma}(\Gamma_3-\sigma)
    -(\Gamma_1-\Gamma_3+\sigma)
    =0,
  \end{align*}
  using the second condition for $\ell_0$ in \eqref{eq:EulerCPDefell0Pre}.
Estimates \eqref{eq:EulerCPEst11}, \eqref{eq:EulerCPEst22} together with \eqref{eq:691823iwej2NNN} therefore imply that
$\{W_C^a(t_n)\}$ is a Cauchy sequence in $H^{k+\ell}(\Sigma)$ for $a=1,2$.  Consequently, the sequence $\{W_C^a(t_n)\}$ has a limit in $H^{k+\ell}(\Sigma)$, which we denote by $W_C^a(0)$ for $a=1,2$. 
We recall that the existence of the limit for $W^0_C$ had already been established in Proposition \ref{prop:EulerSolveCPCor}. Regarding component $W^3_C$, it follows that $\{W_C^3(t_n)\}$ is a Cauchy sequence in $H^{k+\ell_0-2}(\Sigma)$ from the $j=3$, $m=0$-case of \eqref{eq:691823iwej2NNN} combined with \eqref{eq:EulerCPEst33} while observing the same conditions for $\sigma$ as before. As a consequence, $\{W_C^3(t_n)\}$ has a limit $W_C^3(0)\in  H^{k+\ell_0-2}(\Sigma)$.

Estimate \eqref{eq:EulerPertSecLimit1Pre} is the same as \eqref{eq:EulerPertFirstLimitU}.
Regarding estimate \eqref{eq:EulerPertSecLimit2Pre},
we consider \eqref{eq:691823iwej2NNN} with $j=1,2$, $m=\ell_0-\ell-2$ and $t=t_n$, and let $t_{\tilde{n}}\nearrow 0$. Given that
\[-\eta_a=\min\{2 (\ell_0-\ell-2) (\Gamma_3-\sigma)-(\Gamma_a-\Gamma_3),-\sigma\}\]
from \eqref{eq:etadef}, we then see, with the help of \eqref{eq:EulerCPEst11} and \eqref{eq:EulerCPEst22}, that
\begin{equation*}
  \bnorm{W_C^a(t)-W_C^a(0)}_{H^{k+\ell}(\Sigma)}\le |t|^{\min\{2 (\ell_0-\ell-1) (\Gamma_3-\sigma)-(\Gamma_a-\Gamma_3)-\sigma, 2(\Gamma_3-\sigma)-2\sigma,1-\Gamma_1\}}, \quad a=1,2,
\end{equation*}
which establishes \eqref{eq:EulerPertSecLimit2Pre}. Finally, we use again \eqref{eq:691823iwej2NNN}, this time with $j=3$, $m=0$ and $t=t_n$. Letting $t_{\tilde{n}}\nearrow 0$, we find, with the help of \eqref{eq:EulerCPEst33}, that
\begin{equation*}
  \bnorm{W_C^3(t)-W_C^3(0)}_{H^{k+\ell}(\Sigma)}\le |t|^{\min\{ 2(\Gamma_3-\sigma)-\sigma,1-\Gamma_1\}},
\end{equation*}
from which we deduce  \eqref{eq:EulerPertSecLimit3Pre}.

Finally, we establish \eqref{eq:stabilitycloseness} by considering, for fixed $V_S$, two solutions $W_C$ and $\Wt_{C}$ obtained under the same conditions as before for two Cauchy data perturbations $V_0$ and $\Vt_0$ imposed at the same time $T_0\in [\Tt_0,0)$ satisfying $\norm{V_0}_{H^{k+\ell_0}(\Sigma)}\le \delta$ and $\norm{\Vt_0}_{H^{k+\ell_0}(\Sigma)}\le \delta$.
According to \eqref{eq:EulerResc} and \eqref{WCdef} and the definitions of $V_0$ and $\Vt_0$, this means that
\begin{equation}
  \label{eq:WIDPert}
  W_C(T_0)=\Th(T_0)T^{-1}(T_0)V_S(T_0)+V_0,\quad \Wt_{C}(T_0)=\Th(T_0)T^{-1}(T_0)V_S(T_0)+\Vt_0. 
\end{equation}
The difference $W_C(t)-\Wt_{C}(t)$ can be written with the help of \eqref{eq:691823iwej2} as
\begin{equation}
  \label{eq:suboptimal1}
  \begin{split}
  W_C^j(t)&-\Wt_{C}^j(t)=W_C^j(T_0)-\Wt_{C}^j(T_0)\\
  &+\int_{T_0}^t\Bigl(- \sum_{i=1}^3\sum_{k=0}^3\Bh_{C,k}^{j,i}\partial_i W^k_C(s)
  +G_C^j(s)\Bigr)ds
  -\int_{T_0}^t\Bigl(- \sum_{i=1}^3\sum_{k=0}^3{\tilde\Bh}_{C,k}^{j,i}\partial_i \Wt^k_{C}(s)
+\Gt_{C}^j(s)\Bigr)ds.
\end{split}
\end{equation}
And therefore, exploiting the $m=\ell_0-\ell-2$-case of \eqref{eq:EulerCPEst11} -- \eqref{eq:EulerCPEst33}, \eqref{eq:etadef} and \eqref{eq:etaest}
\begin{equation}
  \label{eq:suboptimal2}
  \norm{W_C^j(t)-\Wt_{C}^j(t)}_{H^{k+\ell}(\Sigma)}\le \norm{W_C^j(T_0)-\Wt_{C}^j(T_0) }_{H^{k+\ell}(\Sigma)}
  +C |T_0|^{\min\{2(\Gamma_3-\sigma)-\eta_1-\sigma,1-\Gamma_1\}},
\end{equation}
for all $t\in [\Tt_0,0)$ and for each $j=1,2,3$,
where the constant $C>0$ is in particular independent of $t$, $V_0$ and $\Vt_0$. Due to the convergence result above for this choice of $m$, this estimate also holds in the limit $t\nearrow 0$. We now establish \eqref{eq:stabilitycloseness} by re-expressing the power of $|T_0|$ as above and using \eqref{eq:WIDPert}.  
\end{proof}

\bigskip

\noindent \underline{Step 3: Establish that $V_C$ agrees with a solution $\Vt_S$ of the singular initial value problem}

\bigskip

To complete the proof of \Theoremref{thm:fluidresult2}, we need to verify the solution $V_C$ from  \Propref{prop:EulerSolveCPCorStrong} that determines via \eqref{VC2UC} and \eqref{WCdef} the limits $W_C^0(0)$, $W_C^1(0)$, $W_C^2(0)$, $W_C^3(0)$ agrees with the solution $\tilde V_S$ from \Theoremref{thm:fluidresult} that is generated by the asymptotic data $\tilde v_*=(W^0_C(0), W^1_C(0), W^2_C(0), W^3_C(0))^{\tr}$. Note that \Propref{prop:EulerSolveCPCorStrong} guarantees that $\tilde v_*\in H^{k+\ell}(\Sigma)$ and $W^0_C(0)>0$, which is sufficient to apply \Theoremref{thm:fluidresult} and obtain the solution $\tilde{V}_S$ satisfying \eqref{eq:EulerregNN}.

Since $V_C$ defines a classical solution of the Euler equations \eqref{eq:AAA1} on $[T_0,0)\times \Sigma$, we would be able to conclude from the uniqueness statement from \Theoremref{thm:fluidresult} that $V_C=\Vt_S$, which is what we need to establish, if we can show that
\begin{equation} \label{VC=VtS}
\sup_{t\in [T_0,0)}\norm{|t|^{-\mu}(U_C-\Ut_S)}_{H^k(\Sigma)}\lesssim 1
\end{equation}
for some $\mu>\Gamma_1$ where $U_C=T^{-1} V_C$ and $\Ut_S=T^{-1}\Vt_S$. But, by 
\Theoremref{thm:fluidresult}, we have
\[\Ut_S=\Ut_{\ell-1}+\ut\]
where $\ut$ is given by
\Propref{prop:EulerProp2}
and $\Ut_{\ell-1}$ is constructed in \Lemref{lem:iterateLOT} from the asymptotic data $\tilde v_*$; see in particular, \eqref{remainder} and \eqref{U*set}. Therefore, due to \eqref{eq:solution1estimateboundednessNNNNN} and the fact that $\lambda+1 >\Gamma_1$ in \Propref{prop:EulerProp2}, it is sufficient to show that 
\[\sup_{t\in [T_0,0)}\norm{|t|^{-\mu}(\Ut_{\ell-1} -U_C)}_{H^k(\Sigma)}\lesssim 1\]
for some $\mu>\Gamma_1$ in order to guarantee that \eqref{VC=VtS} holds. 
Using the same notation as previously, see \eqref{eq:EulerDefuhn}, we set
\[\Ut_{\ell-1}=\Th^{-1}\Wt_{\ell-1}=\Th^{-1}(\tilde v_*+\wt_{\ell-1})\] 
and define analogously
\[U_{C}=\Th^{-1}W_{C}=\Th^{-1}(\tilde v_*+w_{C}).\] Then because of \eqref{eq:Gammarestr} and \eqref{Thdef}, we conclude that 
\eqref{VC=VtS} will hold provided that
\begin{equation}
  \label{eq:EulerCPUniqueness}
  \sup_{t\in [T_0,0)}\norm{|t|^{-\mu}(\wt_{\ell-1} -w_C)}_{H^k(\Sigma)}\lesssim 1
\end{equation}
for some $\mu>\Gamma_1$.

Let us now consider the finite sequence of leading-order term fields $(\wt_m)$ for $m=0,\ldots,\ell-1$ constructed as in \Lemref{lem:iterateLOT}. This means that for each $m\in [0,\ell-1]$, the function $\wt_m$ is given by \eqref{w-evol-A} with $n+1=m$ and with $v_*$ replaced by $\vt_*$.
Since $W_C=\vt_*+w_C$, it is clear from \eqref{eq:3333339392} that $w_C$ will satisfy \eqref{w-evol-A} 
if we set $w_{n+1}=w_n=w_C$, $\Bh^i_{n+1}=\Bh^i_{n}=\Bh^i_{C}$, $G_{n+1}=G_{n}=G_{C}$ and $v_*=\vt_*$.
In full analogy to \eqref{w-evol-B} and \eqref{eq:EulerSIVPImpr}, this implies that
\begin{equation}
    \label{w-evol-BC}
    \del{t}(\wt_{m}-w_C)= \fb_{m-1} :=  -\Bh^i_{m-1}\del{i}(\wt_{m-1}-w_{C})-(\Bh^i_{m-1}-\Bh^i_{C})\del{i}(\vt_*+w_{C})+\Gh_{m-1}-\Gh_{C}
  \end{equation}
  and
\begin{equation}
  \label{eq:EulerSIVPImprNNN}
    \wt_{m}(t)-w_{C}(t) = -\int_{t}^0 \fb_{m-1}(s)\, ds.
  \end{equation}

  We now employ induction to verify that the inequality
  \begin{equation}
    \label{eq:asaskld2934ldfmv}
    \norm{\wt_{m}-w_{C}}_{H^{k+\ell-m}(\Sigma)}\lesssim |t|^{(m+1)q-4\sigma}
  \end{equation}
  holds for each integer $0\le m\le\ell-1$, where $q$ is defined by \eqref{eq:Eulerchooseq}.
  For the base case $m=0$, we observe that
  $\wt_0=0$, see \eqref{eq:Eulerw0}, and the estimates \eqref{eq:EulerPertSecLimit1} and \eqref{eq:EulerPertSecLimit2}, which we know hold by Step 2, can be combined to give
  \[\norm{\wt_{0}-w_{C}}_{H^{k+\ell}(\Sigma)}\lesssim |t|^{q-4\sigma}.\]
  For the induction step $m\mapsto m+1$, suppose that \eqref{eq:asaskld2934ldfmv} holds for some $0\le m\le\ell-2$.
  Using analogous arguments to the ones used to derive \eqref{fh-est}, we find that
\begin{align*}
  \norm{\fb_{m}}_{H^{k+\ell-{(m+1)}}(\Sigma)}\lesssim \bigl(|t|^{-\Gamma_1}+|t|^{2\Gamma_3-1}\bigr) \norm{\wt_{m}-w_{C}}_{H^{k+\ell-m}(\Sigma)}
\end{align*}
for all $t\in [T_0,0)$, and hence, due to \eqref{eq:asaskld2934ldfmv}, that
\begin{align*}
  \norm{\fb_{m}}_{H^{k+\ell-{(m+1)}}(\Sigma)}\lesssim \bigl(|t|^{1-\Gamma_1}+|t|^{2\Gamma_3}\bigr) |t|^{(m+1)q-4\sigma-1}
  =|t|^{(m+2)q-4\sigma-1},
\end{align*}
where in deriving this expression, we have again made use of the definition of $q$ given by \eqref{eq:Eulerchooseq}.
\Eqref{eq:EulerSIVPImprNNN}, with $m$ replaced by $m+1$, then gives
\begin{equation*}
    \norm{\wt_{m+1}-w_{C}}_{H^{k+\ell-(m+1)}(\Sigma)}\lesssim |t|^{(m+2)q-4\sigma},
  \end{equation*}
  and we conclude that \eqref{eq:asaskld2934ldfmv} holds for each integer $0\le m\le\ell-1$. Setting $m=\ell-2$, we deduce that
  \begin{equation*}
    \norm{\wt_{\ell-1}-w_{C}}_{H^{k+1}(\Sigma)}\lesssim |t|^{\ell q-4\sigma}.
  \end{equation*}
But, since $\ell q>\Gamma_1$ and we are allowed to choose $\sigma$ as small as we like, we see that \eqref{eq:EulerCPUniqueness} holds, and so, we conclude that $V_C=\Vt_S$, which completes the proof of \Theoremref{thm:fluidresult2}.
\hfill$\Box$

\appendix 

\section{Cauchy problem for Fuchsian equations}
\label{sec:FuchsianCauchyProblem}

In \cite{beyer2019a}, it was established that the initial value problem 
\begin{align*} 
    B^0(t,u)\del{t}u + B^i(t,u)\nabla_i u &= \frac{1}{t}\Bc(t,u)u +F(t,u) \quad \text{in $[T_0,0)\times \Sigma$,} 
    \\
    u &= u_0 \quad \hspace{2.6cm}\text{in $\{0\}\times \Sigma$,} 
\end{align*}
for Fuchsian equations admits solutions under a small initial data assumption provided that the coefficients satisfy certain structural conditions. Strictly speaking, the existence results from \cite{beyer2019a} do not apply to the initial value problem for Fuchsian equations of the form \eqref{symivp.1}, that is,
\begin{align} 
    B^0(t,w_1,u)\del{t}u + B^i(t,w_1,u)\nabla_i u &= \frac{1}{t}\Bc(t,w_1,u)u +F(t,w_2,u) \quad \text{in $[T_0,0)\times \Sigma$,} \label{FuchsianIVPB.1} \\
    u &= u_0 \quad \hspace{3.7cm}\text{in $\{T_0\}\times \Sigma$,} \label{FuchsianIVPB.2} 
\end{align}
which are considered in this article
due to the presence of the ``background'' fields $w_1$ and $w_2$. However, it is not difficult to verify that all of the results of \cite{beyer2019a} still apply. Since the modifications needed to adapt the results from \cite{beyer2019a} to the current setting are so straightforward, we will only discuss the essential changes below.  Instead, we will content ourselves with stating the corresponding existence result for the initial value problem \eqref{FuchsianIVPB.1} --\eqref{FuchsianIVPB.2}.
However, before we state the existence theorem, we first list the coefficient assumptions that the Fuchsian equation \eqref{FuchsianIVPB.1} needs to satisfy.

\subsection{Coefficient assumptions\label{coeffassumps}}
As in Definition \ref{def:symmhypFuchssystems}, we assume here that $\Zc_1$ and $\Zc_2$ are open and bounded subsets of the vector bundles $Z_1$ and $Z_2$, respectively, where $\pi(\Zc_1)=\pi(\Zc_2)=\Sigma$, and $T_0<0$, $R>0$, $\Rc>0$ and $p\in (0,1]$ are fixed constants. We then assume the coefficients of the Fuchsian equation \eqref{FuchsianIVPB.1} satisfy the following set of assumptions, where here, we employ the same notation as in Section \ref{prelim}:

\begin{enumerate}
\item The section $\Pbb \in \Gamma(L(V))$ satisfies
\begin{equation*} 
\Pbb^2 = \Pbb,  \quad  \Pbb^{\tr} = \Pbb, \quad \del{t}\Pbb =0 \AND \nabla \Pbb =0.
\end{equation*}
\item There exist constants  $\kappa, \gamma_1, \gamma_2 >0$ such that the maps 
\begin{equation*}
    B^0 \in 
C^0_b\bigl([T_0,0), C^\infty\bigl(\Zc_1\oplus B_R(V),L(V)\bigr)\bigr)\cap C^1\bigl([T_0,0), C^\infty\bigl(\Zc_1\oplus B_R(V),L(V)\bigr)\bigr)
\end{equation*}
and $\Bc\in C^0_b\bigl([T_0,0), C^\infty\bigl(\Zc_1\oplus B_R(V),L(V)\bigr)\bigr)$  satisfy
\begin{equation*}
\pi(B^0(t,z_1,v))=\pi(\Bc(t,z_1,v))=\pi((z_1,v)), 
\end{equation*}
and
\begin{equation}\label{B0BCbndtrafoBW}
\frac{1}{\gamma_1} \text{id}_{V_{\pi(v)}} \leq  B^0(t,z_1,v)\leq \frac{1}{\kappa} \Bc(t,z_1,v) \leq \gamma_2 \textrm{id}_{V_{\pi(v)}}
\end{equation}
for  all $(t,z_1,v)\in [T_0,0)\times \Zc_1\oplus B_{R}(V)$. Moreover,
\begin{align*} 
[\Pbb(\pi(v)),\Bc(t,z_1,v)] = 0 \AND 
(B^0(t,z_1,v))^{\tr} = B^0(t,z_1,v)  
\end{align*}
for all $(t,z_1,v) \in [T_0,0) \times \Zc_1 \oplus B_{R}(V)$,
\begin{align*}
\Pbb(\pi(v)) B^0(t,z_1,v)\Pbb^\perp(\pi(v)) &= \Ord\bigl(|t|^{\frac{p}{2}}+\Pbb(\pi(v)) v\bigr) 
\intertext{and}
\Pbb^\perp(\pi(v)) B^0(t,z_1,v) \Pbb(\pi(v)) &= \Ord\bigl(|t|^{\frac{p}{2}}+\Pbb(\pi(v)) v\bigr),
\end{align*}
where $\Pbb^\perp=\id-\Pbb$, and there exist maps $\Bt^0, \tilde{\Bc} \in C^0_b\bigl([T_0,0), C^\infty\bigl(\Zc_1,L(V)\bigr))\bigr)$
satisfying
\begin{align*}
    \pi(\Bt^0(t,z_1))=\pi(\tilde{\Bc}(t,z_1))&=\pi(z_1) \AND
[\Pbb(\pi(v)),\tilde{\Bc}(t,z_1)] =0
\end{align*}
for all $(t,z_1,v)\in  [T_0,0)\times \Zc_1\oplus B_R(V)$ such that
\begin{align*}
B^0(t,z_1,v)-\Bt^0(t,z_1) &= \Ord(v) \AND 
\Bc(t,z_1,v)-\tilde{\Bc}(t,z_1)=\Ord(v). 
\end{align*}

\item The map $F\in C^0\bigl([T_0,0), C^\infty\bigl(\Zc_2\oplus B_R(V),V\bigr)\bigr)$ can be expanded as
\begin{equation*}
F(t,z_2,v) = |t|^{-(1-p)} \Ft(t,z_2) + |t|^{-(1-p)}F_0(t,z_2,v) + |t|^{-(1-\frac{p}{2})}F_1(t,z_2,v) + |t|^{-1}F_2(t,z_2,v)
\end{equation*}
where $\Ft \in C^0\bigl([T_0,0), C^\infty(\Zc_2,V)\bigr)$, the maps
 $F_0,F_1,F_2 \in C^0_b\bigl([T_0,0), C^\infty(\Zc_2\oplus B_R(V),V)\bigr)$
 satisfy
 \begin{align*}
 \pi(\Ft(t,z_2))&=\pi(z_2), \\
 \pi (F_a (t,z_2,v))&=\pi((z_2,v)), \quad a=0,1,2,
 \end{align*}
 and
\begin{equation*} 
\Pbb(\pi(v)) F_2(t,z_2,v) = 0 
\end{equation*}
for all $(t,z_2,v)\in [T_0,0)\times\Zc_2\oplus B_R(V)$, and
there exist constants $\lambda_a\geq 0$, $a=1,2,3$, such that
\begin{align*}
F_0(t,z_2,v) &=\Ord(v),\\
\Pbb(\pi(v)) F_1(t,z_2,v) &= \Ordc(\lambda_1 v),\\
\Pbb^\perp(\pi(v)) F_1(t,z_2,v) &= \Ordc(\lambda_2\Pbb(\pi(v)) v) 
\intertext{and}
\Pbb^\perp(\pi(v)) F_2(t,z_2,v) & = \Ordc\biggl(\frac{\lambda_3}{R}\Pbb(\pi(v)) v\otimes\Pbb(\pi(v))v \biggr).
\end{align*}

\item The  map $B\in C^0\bigl([T_0,0), C^\infty\bigl(\Zc_1\oplus B_R(V),L(V)\otimes T\Sigma\bigr)\bigr)$ satisfies
\begin{equation*}
\pi(B(t,z_1,v))=\pi((z_1,v))
\end{equation*}
and
\begin{equation*}
\bigl[\sigma(\pi(v))(B(t,z_1,v))\bigr]^{\tr}=\sigma(\pi(v))(B(t,z_1,v))
\end{equation*}
for all $(t,z_1,v)\in [T_0,0)\times \Zc_1\oplus B_R(V)$ and $\sigma \in \mathfrak{X}^*(\Sigma)$. 
Moreover, $B$ can be expanded as
\begin{equation*} 
B(t,z_1,v) = |t|^{-(1-p)}B_0(t,z_1,v) + |t|^{-(1-\frac{p}{2})}B_1(t,z_1,v) + |t|^{-1}B_2(t,z_1,v)
\end{equation*}
where  $B_0,B_1,B_2 \in C^0_b\bigl([T_0,0), C^\infty(\Zc_1\oplus B_R(V),L(V)\otimes T^*\Sigma)\bigr)$ satisfy
\begin{equation*}
\pi(B_a(t,z_1,v))=\pi((z_1,v))
\end{equation*}
for all  $(t,z_1,v)\in [T_0,0)\times\Zc_1\oplus B_R(V)$, and
there exist a constant $\alpha\geq 0$ and a map $\Bt_2\in  C^0\bigl([T_0,0), C^\infty\bigl(\Zc_1,L(V)\otimes T^*\Sigma\bigr)\bigr)$  such that
\begin{align*} 
\Pbb(\pi(v)) B_1(t,z_1,v) \Pbb(\pi(v)) &=   \Ord(1), \\
\Pbb(\pi(v)) B_1(t,z_1,v) \Pbb^\perp(\pi(v))&=  \Ordc(\alpha),\\
 \Pbb^\perp(\pi(v)) B_1(t,z_1,v) \Pbb(\pi(v)) &= \Ordc(\alpha), \\
\Pbb^\perp(\pi(v)) B_1(t,z_1,v) \Pbb^\perp(\pi(v)) &= \Ord(\Pbb(\pi(v)) v),\\
\Pbb(\pi(v)) B_2(t,z_1,v) \Pbb^\perp(\pi(v)) &= \Ord(\Pbb(\pi(v)) v),\\
\Pbb^\perp(\pi(v)) B_2(t,z_1,v)\Pbb(\pi(v)) &=  \Ord(\Pbb(\pi(v)) v),\\
\Pbb^\perp(\pi(v)) B_2(t,z_1,v) \Pbb^\perp(\pi(v))  &= \Ord\bigl(\Pbb(\pi(v)) v\otimes \Pbb(\pi(v)) v \bigr)\\
\intertext{and}
\Pbb(\pi(v))\bigl(B_2(t,z_1,v)-\Bt_2(t,z_1)\bigr)\Pbb(\pi(v)) &= \Ord(v).
\end{align*}

\item  There exist constants $q,\theta\geq 0$, and $\beta_a \geq 0$, $a=0,1,\ldots,7$, such that the map $\Div\!B$ defined locally by \eqref{divBdef} satisfies
\begin{gather*}
\Pbb(\pi(v)) \Div \! B(\xi)\Pbb(\pi(v)) = 
\Ordc\bigl(|t|^{-(1-p)}\theta+  |t|^{-(1-\frac{p}{2})}\beta_0 + |t|^{-1}\beta_1\bigl), \\
\Pbb(\pi(v))  \Div\! B(\xi) \Pbb^\perp(\pi(v)) = 
\Ordc\biggl(|t|^{-(1-p)}\theta+|t|^{-(1-\frac{p}{2})}\beta_2
+ \frac{|t|^{-1}\beta_3}{R}\Pbb(\pi(v)) v\biggr), \\
\Pbb^\perp(\pi(v)) \Div\! B(\xi) \Pbb(\pi(v))= 
\Ordc\biggl(|t|^{-(1-p)}\theta+|t|^{-(1-\frac{p}{2})}\beta_4
+ \frac{|t|^{-1}\beta_5}{R}\Pbb(\pi(v)) v\biggr) 
\intertext{and}
\Pbb^\perp(\pi(v)) \Div\! B(\xi) \Pbb^\perp(\pi(v)) =\Ordc\biggl(|t|^{-(1-p)}\theta+
\frac{|t|^{-(1-\frac{p}{2})}\beta_6}{R}\Pbb(\pi(v)) v\\
\qquad\qquad\qquad\qquad\qquad\qquad\qquad+ \frac{|t|^{-1}\beta_7}{R^2}\Pbb(\pi(v)) v\otimes\Pbb(\pi(v)) v \biggr) 
\end{gather*}
where $\xi=(t,z_1,\dot{z}_1,z'_1,z_2,v,v')$ denotes an element of $[0,T_0)\times\Zc_1\oplus B_{\Rc}(Z_1)\oplus B_{\Rc}(Z_1\times T^*\Sigma)\oplus \Zc_2\oplus B_R(V)\oplus B_R(V\otimes T^*\Sigma)$.
\end{enumerate}

\begin{rem} \label{kappatrem}
By \eqref{B0BCbndtrafoBW}, we note that there exist constants $0<\gammat_1\leq \gamma_1$ and $\kappat\geq \kappa >0$
such that
\begin{equation*}
\frac{1}{\gammat_1} \Pbb(\pi(v)) \leq  \Pbb(\pi(v))B^0(t,z_1,v)\Pbb(\pi(v))\leq \frac{1}{\kappat} \Bc(t,z_1,v)\Pbb(\pi(v))  \leq \gamma_2 \Pbb(\pi(v))
\end{equation*}
for  all $(t,z_1,v)\in [T_0,0)\times\Zc_1\oplus B_R(V)$.
\end{rem}

\subsection{Existence and uniqueness}
\label{appendix:existence}

\begin{thm} \label{thm:Fuchsian-IVP}
Suppose $k \in \Zbb_{>n/2+3}$, $\sigma>0$, $u_0\in H^k(\Sigma,V)$, assumptions (1)-(5) from Appendix \ref{coeffassumps} are fulfilled,
$w_1\in C^0_b\bigl([T_0,0),H^k(\Sigma,Z_1)\bigr)\cap C^1\bigl([T_0,0),H^{k-1}(\Sigma,Z_1)\bigr)$ and $w_2\in C^0_b\bigl([T_0,0),H^k(\Sigma,Z_2)\bigr)$ satisfy
\begin{equation*}
   \sup_{T_0<t<0} \max\Bigl\{ \norm{\nabla w_1(t)}_{H^{k-1}(\Sigma)},
    \norm{|t|^{1-q}\del{t}w_1(t)}_{H^{k-1}(\Sigma)}\Bigr\} < \frac{\Rsc}{C_{\text{Sob}}}
\end{equation*}
and
\begin{equation*}
w_a([T_0,0)\times\Sigma)\subset \Zc_a, \quad a=1,2,
\end{equation*}
and the constants $\kappa$, $\gamma_1$, $\lambda_3$, $\beta_0$, $\beta_1$, $\beta_3$, $\beta_5$, $\beta_7$ from Appendix \ref{coeffassumps} satisfy
\begin{equation*}
\label{thm:Fuchsian-IVP-kappa}
\kappa > \frac{1}{2}\gamma_1\max\biggl\{\sum_{a=0}^3  \beta_{2a+1}+2\lambda_3,\beta_1+ 2k(k+1)\mathtt{b}\biggr\}
\end{equation*}
where
\begin{align}
\mathtt{b} =  \sup_{T_0 \leq t < 0}\Bigl( \big\|\big|\Pbb\tilde{\Bc}(t)\nabla(\tilde{\Bc}(t)^{-1}\tilde{B}^0(t))\tilde{B}^0(t)^{-1}\Pbb \tilde{B}_2(t)\Pbb\bigl|_{\op}\bigr\|_{L^\infty(\Sigma)}
+ \big\|\big|\Pbb\tilde{\Bc}(t)\nabla(\tilde{\Bc}(t)^{-1}\tilde{B}_2(t))\Pbb\bigl|_{\op}\bigr\|_{L^\infty(\Sigma)} \Bigr) \label{thm:Fuchsian-IVP-btt}
\end{align}
with $\Bt^0(t)=\Bt^0(t,w_1(t))$ and $\Bt_1(t)=\Bt_1(t,w_1(t))$.

\medskip

\noindent Then there exists
a $\delta > 0$ such that if 
\begin{equation*}
 \max\biggr\{\norm{u_0}_{H^k(\Sigma)},\int_{T_0}^0 |s|^{p-1}\norm{\Ft(\tau)}_{H^k(\Sigma)}\, ds\biggr\}
 \leq \delta,
\end{equation*}
where  $\Ft(t)=\Ft(t,w_2(t))$, 
then there exists a unique solution 
\begin{equation*}
u \in C^0_b\bigl([T_0,0),H^k(\Sigma,V)\bigr)\cap C^1\bigl([T_0,0),H^{k-1}(\Sigma,V)\bigr)
\end{equation*}
of the IVP \eqref{FuchsianIVPB.1}-\eqref{FuchsianIVPB.2} such that the limit $\lim_{t\nearrow 0} \Pbb^\perp u(t)$, denoted $\Pbb^\perp u(0)$, exists in $H^{k-1}(\Sigma,V)$.

\medskip

\noindent Moreover, for $T_0 \leq t < 0$,  the solution $u$ satisfies  the energy estimate
\begin{equation}
  \label{eq:FuchsianEnergyEstimate}
\biggl(\norm{u(t)}_{H^k(\Sigma)}^2 - \int_{T_0}^t \frac{1}{\tau} \norm{\Pbb u(\tau)}_{H^k(\Sigma)}^2\, d\tau \biggr)^{\frac{1}{2}}  \lesssim \biggl( \norm{u_0}_{H^k(\Sigma)} + \int_{T_0}^t |s|^{p-1}\norm{\Ft(s)}_{H^k(\Sigma)}\, ds \biggr)
\end{equation}
and the decay estimates
\begin{align}
  \label{eq:decayPQunt}
\norm{\Pbb u(t)}_{H^{k-1}(\Sigma)} &\lesssim \begin{cases}
|t|^p+(\lambda_1+\alpha)|t|^{\frac{p}{2}} & \text{if $\zeta > p$} \\
|t|^{\zeta-\sigma}+(\lambda_1+\alpha)|t|^{\frac{p}{2}} & \text{if $\frac{p}{2} < \zeta \leq p$} \\
|t|^{\zeta-\sigma}   & \text{if $0 < \zeta \leq \frac{p}{2}$}\end{cases}
\intertext{and}
\norm{\Pbb^\perp u(t) - \Pbb^\perp u(0)}_{H^{k-1}(\Sigma)} &\lesssim
 \begin{cases}  |t|^{p/2}+ |t|^{\zeta-\sigma}
& \text{if $\zeta > p/2$} \\
 |t|^{\zeta-\sigma}  &  \text{if $\zeta \leq p/2 $ }
 \end{cases},
\end{align}
where
\begin{equation*}
\zeta = \kappat -\frac{1}{2}\gammat_1\bigl(\beta_1+ (k-1)k\tilde{\mathtt{b}}\bigr)
\end{equation*}
and
\begin{equation*}
\tilde{\mathtt{b}} =  \sup_{T_0 \leq t < 0}\Bigl( \big\|\big|\Pbb\tilde{\Bc}(t)\nabla(\tilde{\Bc}(t)^{-1}\Pbb \tilde{B}^0(t) \Pbb)\Pbb \tilde{B}^0(t)^{-1} \tilde{B}_2(t)\Pbb\bigl|_{\op}\bigr\|_{L^\infty(\Sigma)}
+ \big\|\big|\Pbb\tilde{\Bc}(t)\nabla(\tilde{\Bc}(t)^{-1}\tilde{B}_2(t))\Pbb\bigl|_{\op}\bigr\|_{L^\infty(\Sigma)} \Bigr).
\end{equation*}
\end{thm}
\begin{proof}
The proof follows from a straightforward adaptation of the proof of Theorem 3.8.~from \cite{beyer2019a} together with the time-reparameterisation argument from Section 3.4.~of that same article; see also Lemma \ref{lem:trafoproof} which shows how the Fuchsian system \eqref{FuchsianIVPB.1} transform under the time-reparameterization $t=-(-t)^p$. The only important change to note is the following simple improvement to the proof of \cite[Theorem 3.8.]{beyer2019a}:

\bigskip

\noindent Adapting the arguments in the proof of \cite[Theorem 3.8.]{beyer2019a}, it is not difficult to verify that the following variation of the differential energy estimate given on the line equation below equation (3.71) in \cite{beyer2019a} holds in our setting:
\begin{equation*}
\del{t} E_k \leq C(\delta,\delta^{-1})\bigl(E_k+\norm{\Ft}_{H^k}\sqrt{E_k}\bigr), \qquad T_0\leq t < T_\delta,
\end{equation*}
where $\Ft=\Ft(t,w_2(t))$ and the energy $E_k$
is now defined by
\begin{equation*} 
E_k(t) = \nnorm{u(t)}^2_k 
+\rho^{-1}_0c(\delta,\delta^{-1})\nnorm{u(t)}_0^2 
-\int_{T_0}^t \frac{\rho_k}{\tau}\nnorm{\Pbb u(\tau)}^2_k\, d\tau.
\end{equation*}
The main advantage of this variation is that it allows us to write the above differential energy inequality as
\begin{equation*}
\del{t} \sqrt{E_k} \leq C(\delta,\delta^{-1})\bigl(\sqrt{E_k}+\norm{\Ft}_{H^k(\Sigma)}\bigr),
\end{equation*}
which, in turn, yields via Gr\"{o}nwall's inequality the energy estimate
\begin{equation} \label{new-energy}
    \sqrt{E_k(t)} \leq e^{C(\delta,\delta^{-1})T_0}\Bigl(\sqrt{E_k(T_0)}+ \int_{T_0}^t \norm{\Ft(s)}_{H^k(\Sigma)}\,ds\Bigr).
\end{equation}
Comparing this estimate with the estimate (3.73) from \cite{beyer2019a}, we see that the advantage of
the new energy estimate \eqref{new-energy} is that it only requires $\int_{T_0}^0 \norm{\Ft(s,w_2(s))}_{H^k(\Sigma)}\,ds$ to be small rather than $\sup_{T_0 < t<0}\norm{\Ft(s,w_2(s))}_{H^k(\Sigma)}$ as in the proof of \cite[Theorem 3.8.]{beyer2019a}. With the exception of this change, the rest of the proof mimics that of \cite[Theorem 3.8.]{beyer2019a} with the obvious changes needed to account for the background fields $w_1$ and $w_2$.
\end{proof}

\begin{rem} \label{rem:Fuchsian-IVP}$\;$

\begin{enumerate}[(i)]
    \item By \cite[Remark 3.10.(a).(ii)]{beyer2019a}, we know, taking into account the time-reparameterization argument from \cite[\S 3.4.]{beyer2019a}, that  if the
    maps $B_1$ and $B_2$ satisfy \begin{equation*} 
    \Pbb^\perp B_1=\Pbb^\perp B_2=0,     
    \end{equation*}
    then the decay estimate for $\Pbb^\perp u(t)-\Pbb^\perp u(0)$ from Theorem \ref{thm:Fuchsian-IVP} improves to 
    \begin{equation*}
\norm{\Pbb^\perp u(t)-\Pbb^\perp u(0)}_{H^{k-1}} \lesssim 
\begin{cases}  |t|^p
& \text{if $\zeta > p$ } \\
  |t|^p +  |t|^{2(\zeta-\sigma)}  &  \text{if $\zeta \leq p$ }
 \end{cases} , 
\quad T_0\leq t < 0.
\end{equation*}
\item If the map $B_2$ satisfies
\begin{equation*}
    B_2=0,
\end{equation*}
then the regularity requirement in the statement of Theorem \ref{thm:Fuchsian-IVP} can be lowered to $k \in \Zbb_{>n/2+1}$. This
is because in this situation the use of \cite[Lemma 3.5.]{beyer2019a} can be avoided in the proof of \cite[Theorem 3.8.]{beyer2019a}, which leads to the reduction in the required regularity.
\end{enumerate}
\end{rem}

\bibliographystyle{abbrv}
\bibliography{bibliography}

\end{document}